\def\ps@pprintTitle{%
   \let\@oddhead\@empty
   \let\@evenhead\@empty
   \let\@oddfoot\@empty
   \let\@evenfoot\@oddfoot
}
\newcommand{\BLambda}{\mathcal{B}_\ell}
\newcommand{\id}{\text{id}}
\newcommand{\ini}{\text{in}}
\def\frk{\mathfrak}               
\def\Phi{{\frk n}}
\def\Phi{{\frk N}}
\def\wb{{\mathbf w}}
\def\B{{\mathcal B}}
\def\A{{\mathcal A}}
\def\Tc{{\mathcal T}}
\def\opn#1#2{\def#1{\operatorname{#2}}} 
\opn\chara{char} \opn\length{\ell} \opn\pd{pd} \opn\rk{rk}
\opn\projdim{proj\,dim} \opn\injdim{inj\,dim} \opn\rank{rank}
\opn\depth{depth} \opn\grade{grade} \opn\height{height}
\opn\embdim{emb\,dim} \opn\codim{codim}
\opn\Tr{Tr} \opn\bigrank{big\,rank}
\opn\superheight{superheight}\opn\lcm{lcm}
\opn\trdeg{tr\,deg}
\opn\reg{reg} \opn\lreg{lreg} \opn\ini{in} \opn\lpd{lpd}
\opn\size{size} \opn\sdepth{sdepth}
\opn\link{link}\opn\fdepth{fdepth}\opn\lex{lex}
\opn\LM{LM}
\opn\LC{LC}
\opn\NF{NF}
\opn\Merge{Merge}
\opn\sgn{sgn}
\opn\div{div} \opn\Div{Div} \opn\cl{cl} \opn\Pic{Pic}
\opn\Prin{Prin}
\opn\op{op}
\opn\indeg{indeg} \opn\outdeg{outdeg}
\opn\red{red}
\opn\Spec{Spec} \opn\Supp{Supp} \opn\supp{supp} \opn\Sing{Sing}
\opn\Ass{Ass} \opn\Min{Min}\opn\Mon{Mon} \opn\val{val}
\opn\Ann{Ann} \opn\Rad{Rad} \opn\Soc{Soc}
 \opn\Ker{Ker} \opn\Coker{Coker} \opn\Am{Am}
\opn\Hom{Hom} \opn\Tor{Tor} \opn\Ext{Ext} \opn\End{End}
\opn\Aut{Aut} \opn\id{id}
\opn\nat{nat}
\opn\pff{pf}
\opn\Pf{Pf} \opn\GL{GL} \opn\SL{SL} \opn\mod{mod} \opn\ord{ord}
\opn\Gin{Gin} \opn\Hilb{Hilb}\opn\sort{sort}
\opn\Image{Image}
\opn\vol{Vol}
\opn\aff{aff} \opn\con{conv} \opn\relint{relint} \opn\st{st}
\opn\lk{lk} \opn\cn{cn} \opn\core{core} \opn\vol{vol}
\opn\link{link} \opn\star{star}\opn\lex{lex}\opn\set{set}
\opn\dist{dist}
\opn\gr{gr}
\def\pot#1#2{#1[\kern-0.28ex[#2]\kern-0.28ex]}
\opn\dirlim{\underrightarrow{\lim}}
\opn\inivlim{\underleftarrow{\lim}}
\def\Implies{\ifmmode\Longrightarrow \else
        \unskip${}\Longrightarrow{}$\ignorespaces\fi}
\def\implies{\ifmmode\Rightarrow \else
        \unskip${}\Rightarrow{}$\ignorespaces\fi}
\def\iff{\ifmmode\Longleftrightarrow \else
        \unskip${}\Longleftrightarrow{}$\ignorespaces\fi}
\newtheorem{theorem}{Theorem}[section]
\newtheorem{lemma}[theorem]{Lemma}
\newtheorem{corollary}[theorem]{Corollary}
\newtheorem{proposition}[theorem]{Proposition}
\newtheorem{question}[theorem]{Question}
\theoremstyle{remark}
\newtheorem{remark}[theorem]{Remark}
\theoremstyle{definition}
\newtheorem{example}[theorem]{Example}
\newtheorem{definition}[theorem]{Definition}
\DeclareMathOperator{\Gr}{Gr}
\DeclareMathOperator{\Flag}{Fl}
\let\kappa=\varkappa
\def\qed{\ifhmode\textqed\fi
      \ifmmode\ifinner\quad\qedsymbol\else\dispqed\fi\fi}
\def\textqed{\unskip\nobreak\penalty50
       \hskip2em\hbox{}\nobreak\hfil\qedsymbol
       \parfillskip=0pt \finalhyphendemerits=0}
\def\dispqed{\rlap{\qquad\qedsymbol}}
\opn\dis{dis}
\def\pnt{{\raise0.5mm\hbox{\large\bf.}}}
\opn\Lex{Lex}
\opn\syz{{\rm syz}}
\opn\spoly{{\rm spoly}}
\opn\LM{{\rm LM}}
\opn\lm{{\rm lm}}
\opn\lcm{{\rm lcm}} \opn\A{\mathcal A}
\numberwithin{equation}{section}
\newcommand{\inwb}{{\rm in}_{{\bf w}_\ell}}
\newcommand{\II}{\mathcal{I}}
\DeclareMathOperator{\trop}{trop}
\DeclareMathOperator{\init}{in}
\begin{document}

\begin{frontmatter}

\title{Toric degenerations of Grassmannians and Schubert varieties \\  from matching field tableaux\\ }
\author{
Oliver Clarke and Fatemeh Mohammadi
}
\begin{abstract}
We study Gr\"obner degenerations of Grassmannians and the Schubert varieties inside them. We provide a family of binomial ideals whose combinatorics is governed by matching field tableaux in the sense of Sturmfels and Zelevinsky in \cite{sturmfels1993maximal}.
We prove that these ideals are all quadratically generated and they yield a SAGBI basis of the Pl\"ucker algebra. This leads to a new family of toric degenerations of Grassmannians. Moreover, we apply our results to construct a family of Gr\"obner degenerations of Schubert varieties inside Grassmannians. We provide a complete characterization of toric ideals among these degenerations in terms of the combinatorics of matching fields, permutations and semi-standard tableaux. \end{abstract}
\begin{keyword}
Toric degenerations \sep SAGBI bases \sep Khovanskii bases \sep Grassmannians \sep Schubert varieties \sep Semi-standard Young tableaux
\end{keyword}

\end{frontmatter}

\section{Introduction}\label{sec:1}
Computing toric degenerations of varieties is
a powerful tool to extend the deep relationship between combinatorics and toric varieties to more general
spaces \cite{An13}. Given a projective variety, a toric degeneration is a flat family whose fiber over zero is a toric variety
and all of whose other fibers are isomorphic to the original variety. Hence, toric degenerations enable us to use
the tools developed in toric geometry to study more general varieties. 

\medskip

Toric degenerations of Grassmannians, flag and Schubert varieties have been extensively studied in the literature, see e.g. \cite{gonciulea1996degenerations,caldero2002toric,fang2017toric}. 
The main example of such degenerations is  the Gelfand-Cetlin degeneration which is studied with semi-standard tableaux and Gelfand-Cetlin polytopes \cite{FvectorGC, KOGAN}. For the Grassmannian, this example has been generalized, see e.g.~ \cite{Speyer, Witaszek, rietsch2017newton, BFFHL}. 
For example, Rietsch and  Williams describe a family of toric degenerations of Grassmannians arising from plabic graphs \cite{rietsch2017newton}. 
Recently, Kaveh and Manon have used tools from tropical geometry to study toric degenerations of ideals in general \cite{KM16}. The main idea is that the initial ideals associated to the top-dimensional facets of tropicalizations of ideals are good candidates for toric degenerations. A similar approach has been taken in studying toric degenerations of $\Gr(3,n)$ in \cite{KristinFatemeh}, and small flag varieties in \cite{bossinger2017computing}. More precisely, tropical Grassmannians, defined by Speyer and Sturmfels in \cite{Speyer}, provide a nice framework for studying toric degnerations of Grassmannians. 
In \cite{KristinFatemeh}, Mohammadi and Shaw have used this framework together with the theory of matching fields from \cite{sturmfels1993maximal} to show that every coherent matching field has a canonical toric ideal that can be identified as the toric component of the initial ideal associated to a top-dimensional facet of $\trop(\Gr(k,n))$. This, in particular, leads to a family of toric degenerations for $\Gr(3,n)$. Our work extends results from \cite{KristinFatemeh} to higher-dimensional Grassmannians.
\medskip

In this work, we are interested in toric degenerations of Grassmannians and Schubert varieties inside them from the point-of-view of algebraic combinatroics. In other words, we have a positive answer to \textcolor{blue}{the} {\it Degeneration Problem}, posed by Caldero in \cite{caldero2002toric}, for Schubert varieties inside  Grassmannians (see \cite{fang2017toric}, and references therein, for other such examples). 

\medskip

Let $\Lambda(k,n)$, or $\Lambda$ when there is no confusion, be a set of $k\times 1$ tableaux of integers corresponding to all  $k$-subsets of $[n]=\{1,\ldots,n\}$.  By combining 
tableaux from $\Lambda$ side by side, we can construct larger tableaux.  We say a pair of row-wise equal tableaux differ by a {\em swap} if they are the same for all but two columns. Moreover, two row-wise equal tableaux 
are called {\it quadratically equivalent} 
if they can be obtained from each other by a series of swaps. 
Now, let $R= \mathbb{K}[{\bf P}]$ denote the polynomial ring with one variable associated to 
each $k$-subset of $[n]$. For any pair of quadratically equivalent tableaux $T$ and $T'$ one can write a binomial ${\bf P}^{T}-{\bf P}^{T'}.$ The ideal $J_{\Lambda}\subset R$
is generated by all such binomials. In other words, two tableaux are quadratically equivalent if and only if their corresponding monomials are equal in $R/J_\Lambda$. This ideal is implicitly defined by Sturmfels and Zelevinsky to address questions about determinantal varieties; $\Lambda$ is the matching field defined in \cite{sturmfels1993maximal}. In \cite{fink2015stiefel}, Fink and Rinc\'on provided a link between tropical linear spaces and a specific family \textcolor{blue}{of} so-called {\em coherent} matching fields. From a combinatorial point-of-view, it is much more convenient to work with the tableau description of a matching field than the classical definition. In \cite{KristinFatemeh} Mohammadi and Shaw used the tableaux description of coherent matching fields to study the tropicalization of Grassmannians and provided a necessary combinatorial condition for a top-dimensional facet of $\trop(\Gr(k,n))$ to lead to a toric degeneration of $\Gr(k,n)$. When $J_\Lambda$ is quadratically generated, this condition is also sufficient. In particular, for $\Gr(3,n)$, the authors of \cite{KristinFatemeh} provided a family of matching fields, called $2$-block diagonal, whose ideals are quadratically generated and hence lead to a family of toric degenerations of $\Gr(3,n)$. Our main result generalizes this to higher-dimensional Grassmannians. Our natural generalization of block diagonal matching fields as slight perturbations of the classical Gelfand-Cetlin degeneration lead to a family of toric degenerations for $\Gr(k,n)$ for $k\geq 3$. A priori it is not clear why such a family of matching fields should produce toric degenerations or why the resulting varieties are different than the classical case. However, we show that, quite surprisingly, this construction leads to toric varieties many of which are non-isomorphic.

\medskip

In this paper, we study the Pl\"ucker ideals of $\Gr(k,n)$,  denoted by $G_{k,n}$, and their associated algebras from the point-of-view of Gr\"obner basis theory and SAGBI basis theory. Namely, we extend the family of block diagonal matching fields defined in \cite{KristinFatemeh} from  $\Gr(3,n)$ 
to higher-dimensional Grassmannians (see Definition~\ref{def:block}). 
Then, we explicitly construct a weight vector for every such matching field and we study its corresponding initial ideal
$\ini_\wb(G_{k,n})$ (see Definition~\ref{def:initial}). We prove that such ideals are equal to their corresponding matching field ideals $J_\Lambda$ and, in particular, they are all toric.  We remark that a general matching field rarely gives rise to a toric or even a binomial initial ideal. 
Moreover, we prove that the initial ideals
$\ini_\wb(G_{k,n})$ are all quadratically generated.
Note that describing a minimal generating set of toric ideals or even finding an upper bound for the degree of the generators is a difficult problem. Such questions are usually studied for  special families of ideals with combinatorial structures, see e.g.~\cite{White, hibi1987distributive,ene2011monomial,Ohsugi, Mateusz, Cone}. 

\medskip

We describe a minimal generating set of the associated Pl\"ucker algebra of $\ini_\wb(G_{k,n})$ in terms of its corresponding {\em matching field tableaux}. 
We find this generating set by explicitly constructing a SAGBI basis for the Pl\"ucker algebra. 
In combinatorial terms, for each matching field we construct a collection of $k\times 2$ tableaux such that every $k\times 2$ tableau is row-wise equal to exactly one tableau in the collection. Then, we show that this collection is in bijection with $k\times 2$ tableaux in semi-standard form, which provide a SAGBI basis for the Pl\"ucker algebra in the Gelfand-Cetlin case. Hence, our results directly generalize the analogous results from the classical Gelfand-Cetlin case, see e.g.~\cite[Theorem 14.11]{MS05}.

\medskip

The paper is structured as follows: 
In \S\ref{prelim}, we fix our notation and introduce our main objects of study. We define the Pl\"ucker ideal, block diagonal matching fields and their associated ideals. We also define Schubert varieties and their Gr\"obner degenerations using matching field ideals (see Definition~\ref{def:ideals}).
In \S\ref{sec:tableaux}, we define tableaux arising from matching fields. Basic properties of tableaux are studied in \S\ref{sec:tableaux_quad}. Moreover, in \S\ref{sec:core}, we study the matching field tableaux from the point-of-view of algebras, and in Lemmas~\ref{lem:basis_span} and \ref{lem:basis_indep}, we provide a SAGBI basis for the Pl\"ucker algebras associated to block diagonal matching fields. These are the core lemmas needed for our main results. 
In \S\ref{sec:Grassmannian}, we study the ideals of block diagonal matching fields from the viewpoint of SAGBI basis theory and Gr\"obner basis theory. In Theorem~\ref{prop:quad}, we show that they are all quadratically generated.
Then, we study their associated initial algebras and in Theorem~\ref{thm:SAGBI}, we prove that the Pl\"ucker variables form a SAGBI basis for the Pl\"ucker algebra. Hence, we obtain a family of toric degenerations of $\Gr(k,n)$.  We then turn our attention to Schubert varieties in \S\ref{sec:Schubert}. Our goal in studying degenerations of Schubert varieties is to answer Question~\ref{question:grassmannian}. Our main result is Theorem~\ref{Intro:Grassmannian} in which we provide a complete characterization of toric ideals arising from block diagonal matching fields for Schubert ideals.

\smallskip

\noindent{\bf Acknowledgement.} The second author would like to express her gratitude to Kristin Shaw and Bernd Sturmfels for introducing her to this concept and for many helpful conversations.
We would also like to thank Narasimha Chary and J\"urgen Herzog
for helpful comments on a preliminary version of this manuscript, and Elisa Gorla for fruitful conversation on the proof of Theorem~\ref{thm:SAGBI}. 
The first author was supported by EPSRC Doctoral Training Partnership 
(DTP) award EP/N509619/1.
The second author was supported by a BOF Starting Grant of Ghent University and EPSRC Early Career Fellowship EP/R023379/1. We are grateful to the anonymous referees for very helpful comments on earlier versions of this paper.


\section{Preliminaries}\label{prelim} 
Throughout we set  $[n]:= \{1, \dots , n\}$ and we use  $\mathbf{I}_{k, n}$ to denote the collection of subsets of $[n]$ of size $k$. The symmetric group on $n$ elements is denoted by $S_n$. We also fix a field $\mathbb{K}$ with char$(\mathbb{K})=0$. We are mainly interested in the case when
$\mathbb{K}=\mathbb{C}$.

\subsection{\bf Flag varieties and Grassmannians}\label{sec:prelim_flag_grassmannian}  
The set of full flags 
$$\{0\}= V_0\subset V_1\subset\cdots\subset V_{n-1}\subset V_n=\mathbb{K}^n$$
of vector subspaces of $\mathbb{K}^n$ with ${\rm dim}_{\mathbb{K}}(V_i) = i$ is called the {\em flag variety} denoted by $\Flag_n$, which is naturally embedded in a product of Grassmannians using Pl\"ucker variables.
Each point in the flag variety can be represented by an $n\times n$ matrix $X=(x_{i,j})$ whose first $k$ rows generate $V_k$ which corresponds to a point in the Grassmannian $\Gr(k,n)$. Therefore, the ideal of $\Flag_n$, denoted by $I_n$, is the kernel of the map
\[\label{eqn:pluckermap} 
\varphi_n:\  \mathbb{K}[P_J:\ \varnothing\neq J\subsetneq \{1,\ldots,n\}]\rightarrow \mathbb{K}[x_{i,j}:\ 1\leq i\leq n-1,\ 1\leq j\leq n]
\]
sending each Pl\"ucker variable
$P_J$ to the determinant of the submatrix of $X$ with row indices $1,\ldots,|J|$ and column indices in $J$. 
Similarly, we define the {\em Pl\"ucker ideal} of $\Gr(k,n)$, denoted by $G_{k,n}$, as the kernel of the map $\varphi_n$ restricted to the ring with variables $P_J$ with $|J|=k$.

\subsection{\bf Schubert varieties}\label{sec:prelim_schubert}
Let SL$(n,\mathbb C)$ be the set of $n\times n$ matrices with determinant $1$,  and let $B$ be its subgroup consisting of upper triangular matrices.  There is a natural transitive action of SL$(n,\mathbb C)$ on the flag variety $\Flag_n$ which identifies $\Flag_n$ with the set of left cosets SL$(n,\mathbb C)/B$, since $B$ is the stabilizer of the standard flag
$0\subset \langle e_1\rangle \subset\cdots \subset \langle e_1, \ldots, e_n\rangle=\mathbb C^n $. Given a permutation $w\in S_n$, $\sigma_w$ is the $n\times n$ matrix whose only non-zero entries are $1$ in position $(w(i),i)$ for each $1 \le i \le n$.
 By the Bruhat decomposition, we can write 
${\rm SL}(n,\mathbb C)/B= \coprod_{w\in S_n}B\sigma_wB/B.$

Given a permutation $w$, its {\em Schubert variety} is $$X(w)=\overline{B\sigma_wB/B} \subseteq \Flag_n$$ which is the Zariski closure of the corresponding cell in the Bruhat decomposition. 
The associated ideal of the Schubert variety $X(w)$ is
$I(X(w))=I_n+\langle P_I: I\in S_w\rangle,$
where 
$$
S_w=\{I : I\subset[n] \ \text{with} \ I=(i_1,\ldots,i_{|I|})\not\leq (w_{\ell_1},w_{\ell_2},\ldots,w_{\ell_{|I|}})\},
$$
and $w_{\ell_1}<w_{\ell_2}<\cdots<w_{\ell_{|I|}}$ is obtained by ordering the first $\lvert I \rvert$ entries of $w$. Here, $I_n$ is the Pl\"ucker ideal defined in \S 2.1 and $\le$ is the component-wise partial order on $\mathbf{I}_{k, n}$. 

Similarly, we can study the Schubert varieties inside $\Gr(k,n)$. The permutations giving rise to distinct Schubert varieties in $\Gr(k,n)$ are of the form $w=(w_1,\ldots,w_n)$ 
where  $w_{1}<\cdots<w_k$, $w_{k+1}<\cdots<w_n$. Therefore, it is enough to record the permutations of $S_n$ as $w=(w_1,\ldots,w_k)$ which will be called a {\em Grassmannian permutation}. If $w$ is a Grassmannian permutation then we define $S_{w,k}=S_w\cap\{I:\ |I|=k\}$. The elements in $S_{w,k}$ correspond to the variables which vanish in the ideal of the Schubert variety of the Grassmannian, see Definition~\ref{def:ideals}.

\subsection{\bf Matching fields}\label{sec:prelim_matching_field}
Some of the most important features of this section are that each matching field induces a canonical toric ideal, and gives rise to a weight vector for the Pl\"ucker variables $P_I$ for $I\in \mathbf{I}_{k, n}$. Following \cite{KristinFatemeh}, we define matching fields as follows.

Given integers $k$ and $n$, a matching field denoted by $\Lambda(k,n)$, or $\Lambda$ when there is no confusion, is a choice of permutation $\Lambda(I) \in S_k$ for each $I \in \mathbf{I}_{k, n}$. We think of the permutation {$\sigma=\Lambda(I)$} as inducing a new ordering on the elements of $I$, 
where the position of $i_s$ is  $\sigma(s)$. In addition, we think of $I$ as being identified with a monomial of the Pl\"ucker form $P_I$ and we represent this monomials as a $k \times 1$ tableau where the entry of $(\sigma(r), 1)$ is $i_{r}$. To make this tableau notation precise we define the ideal of the matching field as follows.

Let $X=(x_{i,j})$ be a $k \times n$ matrix of indeterminates. To every $k$-subset $I$ of $[n]$ with $\sigma=\Lambda(I)$ we associate the monomial 
$
\textbf{x}_{\Lambda(I)}:=x_{\sigma(1) i_{1}}x_{\sigma(2)i_2}\cdots x_{{\sigma(k)i_k}}. 
$
The {\em matching field ideal} $J_\Lambda$ is defined as the kernel of the monomial map
\begin{eqnarray}\label{eqn:monomialmap}
\phi_{\Lambda} \colon\  & \mathbb{K}[P_I]  \rightarrow \mathbb{K}[x_{ij}]  
\quad\text{with}\quad
 P_{I}   \mapsto \text{sgn}(\Lambda(I)) \textbf{x}_{\Lambda(I)},
\end{eqnarray}
where $\text{sgn}(\Lambda(I))$ denotes the signature of the permutation $\Lambda(I)$ for each $I \in \mathbf{I}_{k, n}$. 
\begin{definition}
A matching field $\Lambda$  is \emph{coherent} if there exists an $k\times n$ matrix $M$ with entries in $\mathbb{R}$ 
such that 
for every  $I \in \mathbf{I}_{k,n}$ 
the initial of the Pl\"ucker form  $P_I \in \mathbb{K}[x_{ij}]$ is $\text{in}_M (P_I) = \phi_{\Lambda}(P_I)$, where $\text{in}_M (P_I)$ is the sum of all terms in $P_I$ of the lowest weight and the weight of a monomial $\bf m$ is the sum of entries in $M$ corresponding to the variables in $\bf m$.
In this case, we say that the matrix $M$ \emph{induces the matching field} $\Lambda$. The weight of each variable $P_I$ is defined as the minimum weight of the terms of the corresponding minor of $M$, and it is called {\em the weight induced by $M$}.
\end{definition}
\begin{example}\label{ex:diag}
Consider the matching field $\Lambda(3,5)$ which assigns to each subset $I$ the identity permutation. Consider the following matrix:
\[
M=\begin{bmatrix}
     0  & 0  & 0  & 0  & 0  \\
     5  & 4 & 3  & 2  & 1\\
     9 & 7 & 5 & 3  & 1 \\
\end{bmatrix}.
\]
The weights induced by $M$ on the variables $P_{123}, P_{124},\ldots,P_{345}$ are $9, 7, 5, 6, 4, 3, 6, 4,3, 3$, respectively. Thus, for each $I=\{i,j,k\}$ we have that $\text{in}_M (P_I) = x_{1i}x_{2j}x_{3k}$ for $1\leq i<j<k\leq 5$. Therefore, the matrix $M$ induces $\Lambda(3,5)$. 
Below are the tableaux representing $P_I$ for each $I$:
$$\begin{array}{c}1 \\2  \\ 3 \end{array} , \quad
\begin{array}{c} 1 \\2  \\4 \end{array},\quad
\begin{array}{c}1  \\2 \\ 5 \end{array} , \quad
\begin{array}{c}1 \\ 3 \\ 4  \end{array} ,\quad
\begin{array}{c}1 \\ 3  \\ 5\end{array} ,\quad
\begin{array}{c}1 \\ 4 \\ 5\end{array} ,\quad
\begin{array}{c}2 \\ 3\\ 4  \end{array} ,\quad
\begin{array}{c}2 \\ 3\\ 5\end{array} ,\quad
\begin{array}{c} 2 \\ 4   \\ 5 \end{array} ,\quad
\begin{array}{c} 3 \\ 4   \\ 5 \end{array}. 
$$
\end{example}
Notice that each initial term $\text{in}_M(P_I)$ arises from the leading diagonal. Such matching fields are called {\em diagonal}. See, e.g. \cite[Example 1.3]{sturmfels1993maximal}. 
\begin{definition}\label{def:block}
Given $k,n$ and $0\leq\ell\leq n$, we define the 
{\em block diagonal matching field}
$\BLambda$
as the map from $\mathbf{I}_{k,n}$ to $S_k$ such that
\[
 \BLambda(I)= \left\{
     \begin{array}{@{}l@{\thinspace}l}
      id  &: \text{if $\lvert I|=1$ or $\lvert I \cap \{1,\ldots,\ell\}\rvert \neq 1$},\\
      (12)  &: \text{otherwise}. \\
     \end{array}
   \right.
\]
These matching fields are called $2$-block diagonal in \cite{KristinFatemeh}. Note that $\ell=0$ or $n$ gives rise to the classical \emph{diagonal matching field} as in Example~\ref{ex:diag}. Given a block diagonal matching field $\BLambda$ we define $B_{\ell,1} = \{1, \dots, \ell \}$ and $B_{\ell,2} = \{\ell + 1, \dots, n \}$.
\end{definition}

\begin{example}\label{example:block}
Given $k,n$ and $0\leq \ell\leq n$, the matching field $\BLambda$ is induced by the matrix:
\[
M_{\ell}=\begin{bmatrix}
    0       & 0         & \cdots    & 0         &  0      & 0      & \cdots     & 0  \\
    \ell    & \ell-1    & \cdots    & 1         &n        &n-1     &\cdots      &\ell+1\\
    2n      & 2(n-1)    & \cdots    & 10        &  8      & 6      &4           & 2  \\
    \vdots  & \vdots    & \ddots    & \vdots    & \vdots  & \vdots &  \vdots    &  \vdots   \\
     n(k-1) & (n-1)(k-1)& \cdots    & 5(k-1)    & 4(k-1)  & 3(k-1) & 2(k-1)     & k-1    \\
\end{bmatrix},
\]
and hence, it is coherent. We denote $\wb_\ell$ for the weight vector induced by $M_\ell$.
\end{example}

\begin{definition}\label{def:initial}
Given a block diagonal matching field $\BLambda$, we denote 
the initial ideal 
of $G_{k,n}$ with respect to $\wb_\ell$ by $\inwb(G_{k,n})$ and we define it as the ideal generated by polynomials $\inwb(f)$ for all $f\in G_{k,n}$, where 
\[\inwb(f)=\sum_{\alpha_j\cdot \wb_\ell=d}{c_{{\bf \alpha}_j}\bf P}^{{\bf \alpha}_j}\quad\text{for}\quad f=\sum_{i=1}^t c_{{\bf \alpha}_i}{\bf P}^{{\bf \alpha}_i}\quad\text{and}\quad d=\min\{\alpha_i\cdot \wb_\ell:\ i=1,\ldots,t\}.\]
\end{definition}

\begin{definition}\label{def:ideals}
Given a block diagonal matching field $\BLambda$ and a permutation $w\in S_n$ we define the ideal $G_{k,n,\ell,w}$ to be the ideal obtained by setting the variables $\{P_I : I \in S_{w, k} \}$ to be zero in ${\rm in}_{{\bf w}_\ell}(G_{k,n})$.

This can be computed in $\mathtt{Macaulay2}$ \cite{M2} by using the command $\mathtt{substitute}$ or equivalently $\mathtt{sub}$:
\[
G_{k,n,\ell,w} = \mathtt{sub}({\rm in}_{{\bf w}_\ell}(G_{k,n}), \{P_I => 0 : I \in S_{w,k}\} ).
\]
This algorithm works by computing a Gr\"obner basis and performing elimination. So the ideal $G_{k,n,\ell,w}$ can be obtained by adding the variables $P_I$, where $I\in S_{w,k}$ and then eliminating them which can be computed
\[
G_{k,n,\ell,w}={\rm eliminate\ }({\rm in}_{{\bf w}_\ell}(G_{k,n})+\langle P_I:\ I\in S_{w,k}\rangle,\ \{P_I:\ I\in S_{w,k}\}).
\]
We say that the variable $P_I$ vanishes in $G_{k,n,\ell,w}$ if $I \in S_{w,k}$. 
\end{definition}

\section{Matching field tableaux}\label{sec:tableaux} 
The tableaux which have arisen in the theory of matching fields in \cite{sturmfels1993maximal,KristinFatemeh} will be the main tool in the proofs of our main results in \S\ref{sec:Grassmannian}. Here, we prove important properties about these tableaux.
This section, while elementary, is the most technical part of the paper. We provide illustrative examples and diagrams to make it easier to follow the proofs. The main ingredients needed for our main theorems in \S\ref{sec:Grassmannian}, about Grassmannians, are Lemmas~\ref{lem:swaps_case1} and \ref{lem:swaps_case2} for Theorem~\ref{prop:quad}, and Lemmas~\ref{lem:basis_span}, \ref{lem:basis_indep} and \ref{lem:basis_bijection} for Theorem~\ref{thm:SAGBI}. Other results are used to establish these main ingredients.

\medskip

Throughout this section we fix a block diagonal matching field $\BLambda$. For each collection $\II = \{I_1, \dots, I_t\}$ of non-empty subsets of $[n]$ we denote by $T_{\II}$ or, when there are few columns, by $T_{I_1 \dots I_t}$ \label{notation:tableau} the tableau with columns $I_i$. The order of the elements in each column is given by the matching field $\BLambda$.

\subsection{\bf Quadratic relations among tableaux}\label{sec:tableaux_quad}

\smallskip

\begin{definition}\label{def:column_type}
Given a tableau $T$, for each column $I \in T$ we say that $I$ is of \emph{type} $|I \cap B_{\ell,1}|$. 
\end{definition}
\begin{figure}
    \centering
    \begin{tabular}{|cccccc|cc|}
        \multicolumn{6}{c}{$T_X$} & \multicolumn{2}{c}{$T_Y$} \\
        \hline
        1       & 1     & 2     & 2     & 3     & 5     & 4     & 5  \\
        2       & 2     & 3     & 4     & 4     & 6     & 1     & 3  \\
        3       & 4     & 4     & 5     & 6     & 7     & 5     & 6  \\
        4       & 6     & 7     & 7     & 8     & 8     & 7     & 8  \\
        \hline
    \end{tabular}
    \caption{An example of a tableau for $\Gr(4,8)$ with block diagonal matching field $B_4=(1234|5678)$. The columns appearing in the table from left to right are of type $4,3,3,2,2,0,1,1$. The tableau is partitioned into $T_X$, whose column entries appear in increasing order, on the left and $T_Y$ on the right. 
    }
    \label{tab:type_eg}
\end{figure}
Importantly, if two tableaux are row-wise equal, then they have the same number of columns of each type. Also the columns in $T$ whose entries do not appear in increasing order are exactly those of type $1$. We distinguish these columns with the following notation.

Let $X$ be the collection of all $I \in \mathbf{I}_{k, n}$ for which $\BLambda(I) = id$ and let $Y$ be the collection of all $I \in \mathbf{I}_{k, n}$ for which $\BLambda(I) = (12)$. Equivalently, $Y$ is the collection of all $I$ of type $1$ and $X$ is the collection of all $I$ of type $0, 2,3, \dots, k$. We denote by $T_X$ the sub-tableau of $T$ whose columns lie in $X$ and similarly $T_Y$ the columns of $T$ which lie in $Y$. By convention, we write $T = [T_X \mid T_Y]$ with $T_X$ on the left and $T_Y$ on the right. For example, see Figure~\ref{tab:type_eg}.

\begin{definition}\label{def:swap}
Suppose that $T$ and $T'$ are two tableaux that are row-wise equal.
We say that $T$ and $T'$ differ by a {\em swap}, or $T'$ is obtained from $T$ by a {\em quadratic relation}, if $T$ and $T'$ are the same for all but two columns. If $T$ and $T'$ differ by a sequence of swaps, then we say $T$ and $T'$ are {\em quadratically equivalent}.
\end{definition}

\begin{example} \label{example:semi_std_basis}
Consider the diagonal matching field for $\Gr(3,6)$
and the tableaux $T_1$ and $T_3$ below. Then $T_1$ is quadratically equivalent to $T_3$ with the following sequence of tableaux:
\[
T_1 =
\begin{tabular}{|ccc|}
    \hline
    2 & 1 & 3 \\
    3 & 2 & 4 \\
    4 & {\bf\textcolor{blue}{6}} & {\bf\textcolor{blue}{5}} \\
    \hline
\end{tabular} \, ,
\quad
T_2 = 
\begin{tabular}{|ccc|}
    \hline
    {\bf\textcolor{blue}{2}} & {\bf\textcolor{blue}{1}} & 3 \\
    {\bf\textcolor{blue}{3}} & 
    {\bf\textcolor{blue}{2}} & 4 \\
    4 & 5 & 6 \\
    \hline
\end{tabular} \, ,
\quad
T_3 =
\begin{tabular}{|ccc|}
    \hline
    1 & 2 & 3 \\
    2 & 3 & 4 \\
    4 & 5 & 6 \\
    \hline
\end{tabular}\ .
\]
We say that a tableau is in \emph{semi-standard} form if its columns are strictly increasing and its rows are weakly increasing. In this example, $T_3$ is in semi-standard form. More generally for the diagonal matching field, if $T$ is a tableau then there exists a unique semi-standard tableau $T'$ which is row-wise equal to $T$. Moreover, $T'$ is obtained from $T$ by a sequence of quadratic relations. This follows immediately by applying the following construction to every pair of columns in $T$. For any two  $k$-subsets $I = \{i_1 < i_2 < \dots < i_k\}$ and $J = \{j_1 < j_2 < \dots < j_k \}$, we let $I' = \{\min\{i_1, j_1\}, \min\{i_2, j_2\}, \dots, \min\{i_k, j_k\} \}$ and $J' = \{\max\{i_1, j_1\}, \max\{i_2, j_2\}, \dots, \max\{i_k, j_k\} \}$. Then $T_{IJ}$ and $T_{I'J'}$ are row-wise equal, and $T_{I'J'}$ is in semi-standard form. 
\end{example}

Let $T$ be a tableau. By the above, we may change $T_X$, by a series of quadratic relations, into semi-standard form. Similarly we may transform $T_Y$ so that the entries along each row are weakly increasing and we call this {\em the semi-standard form of} $T_Y$. Unless stated otherwise, we assume that any tableau $T$ is written with each part, $T_X$ and $T_Y$, in semi-standard form, see, e.g. Figure~\ref{tab:type_eg}.

In the following two lemmas, we show how to make row-wise equal tableaux 
similar, by a sequence of quadratic relations.

\begin{lemma}\label{lem:swaps_case1}
Suppose that $T$ and $T'$ are row-wise equal tableaux and the leftmost column of each is of type $i$ where $i \in \{2, 3, \dots, k \}$. Then there exist tableaux $S$ and $S'$ quadratically equivalent to $T$ and $T'$, respectively, such that $S$ and $S'$ contain an identical column of type $i$.
\end{lemma}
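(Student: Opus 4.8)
The plan is to reduce to the case of a single pair of columns and then analyze how the leftmost entries of those columns interact. Since $T$ and $T'$ are row-wise equal, they have the same multiset of columns of each type; in particular both have at least one column of type $i \geq 2$. Write the leftmost columns $I \in T$ and $I' \in T'$, both of type $i$. Because $i \geq 2$, both $I$ and $I'$ lie in $X$, so they are written in increasing order and the matching field acts trivially on them: the entry of $(1,1)$ in each of $T$ and $T'$ is simply $\min(I)$ and $\min(I')$ respectively. If $\min(I) = \min(I')$, there is nothing to do beyond possibly reordering — $S = T$ and $S' = T'$ already share this first column's top entry, and I would iterate down the column. The substantive case is $\min(I) \neq \min(I')$; say $a := \min(I) < \min(I') =: a'$.

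First I would locate the column of $T'$ that contains $a$ in its first row — since $T$ and $T'$ are row-wise equal, $a$ appears somewhere in the first row of $T'$, and since $a < a'$ and $T'_X$ is in semi-standard form with $I'$ leftmost among the $X$-columns, the only candidates are among $T'_Y$ (columns of type $1$). Actually I need to be careful: $a$ could equal the first-row entry coming from a type-$1$ column of $T'$, where the matching field has swapped rows $1$ and $2$. The key structural point to exploit is that $I'$ is of type $i \geq 2$, meaning $I' = \{a'_1 < \cdots\}$ contains at least two elements of $B_{\ell,1} = \{1,\dots,\ell\}$, so in particular $a' = a'_1 \leq \ell$, hence $a < a' \leq \ell$, so $a \in B_{\ell,1}$ as well. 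Now I would take the column $K$ of $T'$ whose top entry (after the matching-field reordering) is $a$, and perform the min/max construction from Example~\ref{example:semi_std_basis} (suitably adapted to account for row order) on the pair $(I', K)$: replace them by $(I'', K'')$ where $I''$ is built from componentwise minima. This is a quadratic relation, it is row-wise equal to the original pair, and it places $a$ into the leftmost column. The resulting leftmost column $I''$ will again be of type $i$ — this needs checking, and it is where the type-$i$ hypothesis does real work, because pulling in the smaller element $a \in B_{\ell,1}$ does not decrease $|I'' \cap B_{\ell,1}|$, and one argues it stays exactly $i$ using that row-wise equality preserves type counts.

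Having matched the top entry, I would then induct \emph{down the column}: restrict attention to rows $2, \dots, k$ and the two columns with equal top entry, and repeat the argument to match the second entry, then the third, and so on, each step costing finitely many swaps. After $k$ steps the entire leftmost column agrees, giving the desired $S$ and $S'$ with an identical column of type $i$. I expect the main obstacle to be bookkeeping: precisely justifying that each min/max-type swap (a) is genuinely a single quadratic relation in the sense of Definition~\ref{def:swap}, (b) keeps both tableaux row-wise equal, and (c) preserves the type of the leftmost column so that the induction stays within the hypotheses — the last point is the crux, since the whole lemma is stratified by column type, and a swap that accidentally changed a type-$i$ column into a type-$(i-1)$ or type-$(i+1)$ column would break the argument. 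Handling the interaction with type-$1$ columns of $T_Y$, where the matching field reverses the first two rows, is the place where one must be most careful about what "top entry" means; but since we only ever pull in elements of $B_{\ell,1}$ that are strictly smaller than the current minimum, the reordering is controlled, and I believe it goes through cleanly once the base case $i \geq 2 \Rightarrow \min(I) \in B_{\ell,1}$ is isolated.
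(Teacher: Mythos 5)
There is a genuine gap here, and it stems from misplacing where the work of the lemma actually lies. In your ``substantive case'' $\min(I)<\min(I')$ you assert that $a=\min(I)$ must then appear in the first row of a type-$1$ column of $T'$. But this case can never arise, and your analysis of it rests on a false premise: since $I$ has type $i\ge 2$, we have $a\in B_{\ell,1}$, while the first-row entry of every type-$1$ column is, by the matching-field reordering $(12)$, the \emph{second}-smallest element of that column, which lies in $B_{\ell,2}$. So $a$ can never sit in the first row of $T'_Y$, and there is no column $K$ to which your adapted min/max construction could be applied. What is really true (and what the paper uses implicitly) is that the first-row entries of the two leftmost columns agree automatically: they are the unique $B_{\ell,1}$-minima of their rows, and row-wise equality forces them to coincide; the same holds in rows $3,\dots,i$. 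Building the argument around this vacuous case, and then invoking the min/max trick of Example~\ref{example:semi_std_basis} across columns of different types (where it is not a single quadratic relation and need not produce validly ordered columns), does not give a proof.

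The step that actually carries the lemma is the one your sketch leaves as ``iterate down the column'': fixing a discrepancy in row $2$ or in a row $j\ge i+1$, where the smaller entry $a_j$ necessarily lives in a type-$1$ column $C$ of $T'_Y$. The paper's proof handles this with a single, explicitly verified swap --- exchanging the tails of columns $B$ and $C$ from row $j$ downwards, checking the order constraints $a_{j-1}<a_j=c_j$ and $c_{j-1}<c_j<b_j$ so that the new columns are legitimate, which also keeps the tableaux row-wise equal and leaves the leftmost column of type $i$ --- and then inducts on the number of entries in which the two leftmost columns differ. Your proposal flags exactly these points (validity of each swap, preservation of row-wise equality, preservation of the type) as ``needs checking'' and ``the crux'' but does not resolve them; moreover, an induction that literally restricts to rows $2,\dots,k$ is not available, since truncated columns are no longer matching-field tableaux. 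As written, the proposal neither reaches nor substitutes for the tail-swap argument, so it does not establish the lemma.
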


\begin{proof}
Let $A = (a_1, \dots, a_k)^{tr}$ be the leftmost column of $T$ and assume that it does not appear as a column in $T'$. Let $B$ be the leftmost column of $T'$ and similarly assume that it does not appear in $T$. Note that $A$ and $B$ have the same type and
 $$B = (a_1, b_2, a_3, \dots, a_i, b_{i+1}, \dots, b_{k})^{tr} \text{ for some } b_{i+1}, \dots, b_k.$$
If $b_2 \neq a_2$ then, without loss of generality, suppose that $a_2 < b_2$. So $a_2$ appears in the second row of $T'_Y$ and we may swap $a_2$ and $b_2$ in $T'$. Thus, we may assume that $b_2 = a_2$.

Suppose $j$ is the smallest index for which $b_j \neq a_j$. Without loss of generality we may assume that $a_j < b_j$. So $a_j$ must appear in the $j^{\rm th}$ row of $T'_Y$. Suppose $a_j$ appears in column $C = (c_1, \dots, c_k) \in T'_Y$ where $c_j = a_j$. We define $B' = (a_1, \dots, a_{j-1}, c_j, \dots, c_k)^{tr}$ and $C' = (c_1, \dots, c_{j-1}, b_j, \dots, b_k)^{tr}$. Note that $B'$ and $C'$ are valid tableaux because $a_{j-1} < a_j = c_j$ and $c_{j-1} < c_j = a_j < b_j$. So we may apply the following relation:

\[
\begin{tabular}{|cc|}
    \multicolumn{2}{c}{$B \qquad C$} \\
    \hline
    $a_1$       & $c_1$     \\
    $\vdots$    & $\vdots$  \\
    $a_{j-1}$   & $c_{j-1}$ \\
    $\textcolor{blue}{b_j}$       & $\textcolor{blue}{c_j}$     \\
    $\textcolor{blue}{\vdots}$    & $\textcolor{blue}{\vdots}$  \\
    $\textcolor{blue}{b_k}$       & $\textcolor{blue}{c_k}$     \\
    \hline
\end{tabular}
=
\begin{tabular}{|cc|}
    \multicolumn{2}{c}{$B' \qquad C'$} \\
    \hline
    $a_1$       & $c_1$     \\
    $\vdots$    & $\vdots$  \\
    $a_{j-1}$   & $c_{j-1}$ \\
    $c_j$       & $b_j$     \\
    $\vdots$    & $\vdots$  \\
    $c_k$       & $b_k$     \\
    \hline
\end{tabular}\ .
\]

After applying this relation, we have reduced the number of differences in columns $A$ and $B$. So by induction, we can find a sequence of swaps after which the leftmost column of $T$ and $T'$ are equal. \qed

\end{proof}

\begin{example}\label{example:swap1}
Consider $\Gr(3,6)$ with block diagonal matching field $B_{3} = (123 | 456)$. Each tableau $T$ below is partitioned with $T_X$ on the left and $T_Y$ on the right, where each part is in semi-standard form. We show that $T_1$ is quadratically equivalent to $T_3$ with the following sequence of quadratic relations. Firstly, we may swap $2$ and $3$ in the second row of $T_1$ to obtain $T_2$ and then swap $5$ and $6$ from the first and second columns of $T_2$ to obtain $T_3$.
\[
T_1 = 
\begin{tabular}{|c|cc|}
    \hline
    1                   & 4 & 5                     \\
    {\bf\textcolor{blue}{3}} & 1 & {\bf\textcolor{blue}{2}}   \\
    5                   & 6 & 6                     \\
    \hline
\end{tabular}
\qquad T_2 =
\begin{tabular}{|c|cc|}
    \hline
    1   & 4 & 5 \\
    2   & 1 & 3 \\
    {\bf\textcolor{blue}{5}}   & {\bf\textcolor{blue}{6}} & 6 \\
    \hline
 \end{tabular}
\qquad T_3 =
\begin{tabular}{|c|cc|}
    \hline
    1   & 4 & 5 \\
    2   & 1 & 3 \\
    6   & 5 & 6 \\
    \hline
\end{tabular}\ 
\]
\end{example}

\begin{lemma}\label{lem:swaps_case2}
Suppose that $T$ and $T'$ are row-wise equal tableaux and each contain columns of type $0$ and $1$ only. Then there exist tableaux $S$ and $S'$ quadratically equivalent to $T$ and $T'$, respectively, such that the first two rows of $S$ and $S'$ are identical.
\end{lemma}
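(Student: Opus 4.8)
The plan is to mimic the structure of the proof of Lemma~\ref{lem:swaps_case1}, but now working with the first two rows simultaneously rather than a single leftmost column. First I would set up the comparison: since $T$ and $T'$ are row-wise equal and contain only columns of types $0$ and $1$, the columns of type $0$ are exactly those in $T_X$ (with entries in increasing order) and those of type $1$ are exactly those in $T_Y$ (with the top two entries transposed by the matching field). Importantly, a column of type $1$ is the unique column whose smallest element lies in $B_{\ell,1} = \{1,\dots,\ell\}$ and whose remaining $k-1$ elements lie in $B_{\ell,2} = \{\ell+1,\dots,n\}$; in a type~$1$ column the matching field puts the element of $B_{\ell,1}$ in the second row and the smallest element of $B_{\ell,2}$ in the first row. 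So the multiset of entries appearing in row~$1$ of $T$ equals that of $T'$, and likewise for row~$2$; my goal is to make these rows match column-by-column, not just as multisets.

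The key step is an induction on the number of positions in the first two rows where $T$ and $T'$ disagree. Assume $S$, $S'$ have been brought (via quadratic relations, keeping each of $S_X, S_Y$ in semi-standard form) as close as possible, and suppose row~1 or row~2 still differs somewhere. I would locate the leftmost column index $j$ at which the pair of top-two entries of $S$ and $S'$ disagree, and argue by cases on whether this column is in the $X$-part or the $Y$-part, and on which of the two rows carries the discrepancy. The essential maneuver is exactly the one in Lemma~\ref{lem:swaps_case1}: if an entry $a$ (say the row-$1$ or row-$2$ entry of column $j$ of $S$) is missing from the corresponding spot in $S'$, it must appear somewhere in the same row of $S'$, in some column $C$; one then performs a single swap between column $j$ of $S'$ and column $C$ that moves $a$ into the right place, checking that the resulting two columns are still valid (columns increasing in the $X$-part, or of the correct type-$1$ shape in the $Y$-part). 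One has to be slightly careful that a swap fixing row~1 does not disturb the already-agreed-upon part; choosing $C$ and the swap so that it only rearranges the tail below the point of first disagreement (as in the $B',C'$ construction in Lemma~\ref{lem:swaps_case1}) achieves this. When the offending column is of type~$1$, note that a swap is allowed to exchange the entry of $B_{\ell,1}$ between two type-$1$ columns of $T_Y$ (this is the "swap $a_2$ and $b_2$" move used already in Lemma~\ref{lem:swaps_case1}), which handles the row-$2$ discrepancies among $Y$-columns; row-$1$ discrepancies among $Y$-columns, and all discrepancies among $X$-columns, are handled by the standard min/max (semi-standardization) swap from Example~\ref{example:semi_std_basis} applied to the relevant pair of columns. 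After each such move the number of disagreeing positions in the top two rows strictly decreases, so the induction terminates with $S$ and $S'$ agreeing in their first two rows.

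The main obstacle I anticipate is bookkeeping in the $Y$-part: because the matching field transposes the top two entries of a type-$1$ column, "the entry in row~1" of a type-$1$ column is its smallest $B_{\ell,2}$-element and "the entry in row~2" is its $B_{\ell,1}$-element, so the two rows play structurally different roles and must be treated by separate sub-arguments, and one must confirm that the swaps used do not create a column that fails to be type~$0$ or type~$1$ (e.g. a swap must not move a $B_{\ell,1}$-element into a column that already contains one, nor strand a column with zero elements of $B_{\ell,1}$ when it should have one). Verifying that a well-chosen swap always exists and preserves these type constraints — essentially, that one never gets "stuck" — is the crux; once that is in place the termination argument is immediate. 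I would also remark, as the lemma's statement invites, that after this reduction one may quotient out the agreed first two rows and recurse on the remaining $(k-2)\times m$ tableaux, which is presumably how this lemma feeds into Theorem~\ref{prop:quad}.
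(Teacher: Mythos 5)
Your outer strategy (induct on the number of disagreements in the top two rows, repairing them one at a time by swaps) matches the paper's, but the proposal leaves unproved exactly the step that constitutes the actual proof, and the specific repair moves you name would not suffice. First, a small missed simplification: with $T_X,T_Y,T'_X,T'_Y$ all in semi-standard form, the second rows of $T$ and $T'$ are \emph{automatically} identical, since the second-row entries of the $X$-part lie in $B_{\ell,2}$, those of the $Y$-part lie in $B_{\ell,1}$, and within each part they are sorted; so only first-row discrepancies ever need repairing, and your ``swap the $B_{\ell,1}$ entries between two type-$1$ columns'' move is never needed. More seriously, once both parts are semi-standard, any remaining first-row disagreement is a cross-part phenomenon: some element of $B_{\ell,2}$ sits in the first row of a type-$1$ column in one tableau but in the first row of a type-$0$ column in the other. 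Swaps confined to a single part --- the min/max semi-standardization swap within $T_X$ or within $T_Y$ that you invoke --- cannot change the first row of a part that is already semi-standard, so they cannot reduce the count of disagreements at all.

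The repair therefore must be a swap between a type-$0$ and a type-$1$ column, and your claim that ``a single swap between column $j$ of $S'$ and the column $C$ containing the missing entry'' does the job, subject to a validity check, is precisely where one can get stuck: in the paper's notation, if $b_1 \ge c_2$ the naive exchange of $a_1$ and $b_1$ inside $T'$ would create a non-increasing type-$0$ column. The paper's proof resolves this by exploiting the already-established equality of second rows to locate the column $D$ of $T$ sitting in the same position as $C$ in $T'$, and then either swapping in $T$ rather than $T'$ (Case 2.i, when $d_1 < a_3$) or swapping two \emph{other} columns $C$ and $E$ of $T'_X$ (Case 2.ii, when $d_1 \ge a_3$, using the chain $b_1 \ge c_2 > d_1 \ge a_3 > a_1$ to guarantee $E$ exists and the new columns are valid). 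You explicitly flag the ``one never gets stuck'' verification as the crux and then leave it unargued; since that case analysis (together with the bookkeeping showing each move strictly decreases the number of first-row differences) is the substance of the lemma, the proposal as written has a genuine gap.
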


\begin{proof}
Since the second row entries of $T_X$ and $T'_X$ lie in $B_{\ell,2}$ and the second row entries of $T_Y$ and $T_Y'$ lie in $B_{\ell,1}$, by semi-standardness of the tableaux, it follows that the second row of $T$ is identical to the second row of $T'$. Note that, if the first row of $T_Y$ is equal to the first row of $T'_Y$, then the first row of $T_X$ is equal to the first row of $T'_X$ and vice versa because $T_X, T'_X, T_Y$ and $T'_Y$ are in semi-standard form. We proceed by induction on the number of differences in the first row of $T$ and $T'$.

Suppose the first row of $T$ is not equal to the first row of $T'$. Let us write the first row of $T_Y$ as $(\alpha_1, \alpha_2, \dots, \alpha_t)$ for some $t \ge 1$, and similarly write $(\beta_1, \beta_2, \dots, \beta_t)$ for the first row of $T'_Y$. By assumption there exists $1 \le i \le t$ such that $\alpha_i \neq \beta_i$. Let $i$ be the smallest such index. Let $A = (a_1, \dots, a_k)^{tr}$ be the column of $T_Y$ whose first entry is $\alpha_i=a_1$ and let $B = (b_1, \dots, b_k)^{tr}$ be the column of $T'_Y$ whose first entry is $\beta_i=b_1$. Since $A$ and $B$ are the $i^{\rm th}$ column of $T_Y$ and $T'_Y$, respectively, and the second rows are the same, we have that $b_2 = a_2$.

Assume without loss of generality that $a_1 < b_1$. Since the tableaux are semi-standard and row-wise
equal, there is a column $C = (c_1, \dots, c_k)^{tr}$ in $T'_X$ where $c_1 = a_1$. We distinguish the following cases:

\textbf{Case 1.} $b_1 < c_2$. Then we may swap $a_1$ and $b_1$ in $T'$. As a result columns $A$ and $B$ have the same entry in the first and second row and so we have reduced the number of differences in the first row.

\textbf{Case 2.} $b_1 \ge c_2$. Then there is a column $D = (d_1, \dots, d_k)^{tr}$ in $T_X$ such that $d_2 = c_2$ and $D$ is in the same position in $T$ as column $C$ is in $T'$. Now we take cases on $d_1$ and $a_3$.

\textbf{Case 2.i.} $d_1 < a_3$. Then we may swap $d_1$ and $a_1$ in $T$. As a result, columns $C$ and $D$ will have the same entries in the first and second row and so we have reduced the number of differences in the first row. 

\textbf{Case 2.ii.} $d_1 \ge a_3$. Then we have $b_1 \ge c_2 > d_1 \ge a_3 > a_1$. In particular $d_1 < b_1$. Hence, $d_1$ appears in the first row for some column in $T'_X$. Call this column $E = (d_1, e_2, \dots, e_k)^{tr}$; see Figure~\ref{fig:tableau}. Since $a_1 < d_1$ and $T'_X$ is in semi-standard form, column $E$ is on the right side of column $C$. Therefore, we may swap the entries $d_1$ and $a_1$ in $T'$. 
So the first two entries of columns $C$ and $D$ are the same and we have reduced the number of differences in the first row.
\begin{figure}
    \centering
    \begin{minipage}{.2\textwidth}
        \begin{tabular}{|c|c|}
            \multicolumn{1}{c}{$D$} & \multicolumn{1}{c}{$A$} \\
            \hline
            $d_1$       & $a_1$     \\
            $c_2$       & $a_2$  \\
            $d_3$       & $a_3$ \\
            $\vdots$    & $\vdots$  \\
            $d_k$       & $a_k$     \\
            \hline
        \end{tabular}
    \end{minipage}
    \begin{minipage}{.2\textwidth}
        \begin{tabular}{|cc|c|}
            \multicolumn{1}{c}{$C$} & \multicolumn{1}{c}{$E$} & \multicolumn{1}{c}{$B$} \\
            \hline
            $\textcolor{blue}{a_1}$       & $\textcolor{blue}{d_1}$     & $b_1$     \\
            $c_2$       & $e_2$     & $a_2$  \\
            $c_3$       & $e_2$     & $b_3$ \\
            $\vdots$    & $\vdots$  & $\vdots$  \\
            $c_k$       & $e_k$     & $b_k$     \\
            \hline
        \end{tabular}
    \end{minipage}
    \caption{Depiction of columns $A$, $D$ from $T$ along with columns $B$, $C$, $E$ from $T'$ (Case 2.ii in Lemma~\ref{lem:swaps_case2}).}
    \label{fig:tableau}
\end{figure}
Therefore, by induction on the number of differences in the first row of $T$ and $T'$, we can apply quadratic relations to $T$ and $T'$ to ensure the first two rows are identical. \end{proof}

\begin{example}\label{example:swap2}
For Lemma~\ref{lem:swaps_case2}, Case 1, consider $\Gr(3,6)$ with the block diagonal matching field $\B_1$. Consider the tableaux:
\[
T = 
\begin{tabular}{|c|cc|}
    \multicolumn{1}{c}{} & \multicolumn{1}{c}{$A$} & \multicolumn{1}{c}{} \\
    \hline
    3   & 2 & 4 \\
    4   & 1 & 1 \\
    6   & 5 & 5 \\
    \hline
\end{tabular} \, ,
\quad T' =
\begin{tabular}{|c|cc|}
    \multicolumn{1}{c}{$C$} & \multicolumn{1}{c}{$B$} & \multicolumn{1}{c}{} \\
    \hline
    {\bf\textcolor{blue}{2}}   & {\bf\textcolor{blue}{3}} & 4 \\
    4   & 1 & 1 \\
    5   & 5 & 6 \\
    \hline
\end{tabular}\ .
\]
We let $A = (2,1,5)^{tr}$ in $T$ and $B = (3,1,5)^{tr}$ in $T'$. Then, following the proof, let $C = (2,4,5)^{tr}$ in $T'$. Since $b_1 = 3 < 4 = c_2$ we may swap $2$ and $3$ in the first row of $T'$. This makes the first two rows of $T$ and $T'$ identical.

\medskip

For Lemma~\ref{lem:swaps_case2}, Case 2.i, consider $\Gr(3,7)$ with the block diagonal matching field $\B_1$. Consider the tableaux $T$ and $T'$: 
\[
T = 
\begin{tabular}{|cc|c|}
    \multicolumn{1}{c}{$D$} & \multicolumn{1}{c}{} & \multicolumn{1}{c}{$A$} \\
    \hline
    {\bf\textcolor{blue}{3}}   & 4 & {\bf\textcolor{blue}{2}} \\
    4   & 5 & 1 \\
    6   & 7 & 5 \\
    \hline
\end{tabular} \, ,
\quad T' =
\begin{tabular}{|cc|c|}
    \multicolumn{1}{c}{$C$} & \multicolumn{1}{c}{} & \multicolumn{1}{c}{$B$} \\
    \hline
    2   & 3 & 4 \\
    4   & 5 & 1 \\
    5   & 6 & 7 \\
    \hline
\end{tabular}\, ,
\quad \tilde{T} = 
\begin{tabular}{|cc|c|}
    \multicolumn{1}{c}{$\tilde{D}$} & \multicolumn{1}{c}{} & \multicolumn{1}{c}{$\tilde{A}$} \\
    \hline
    2   & {\bf\textcolor{blue}{4}} & {\bf\textcolor{blue}{3}} \\
    4   & 5 & 1 \\
    6   & 7 & 5 \\
    \hline
\end{tabular}\ .
\]
Following the proof, since $d_1 = 3 < 5 = a_3$ we may swap $d_1$ and $a_3$ to obtain the tableau $\tilde{T}$ with altered columns $\tilde{D}$ and $\tilde{A}$.
Now observe that the first rows of $\tilde{T}$ and $T'$ differ in only two positions, namely the second and third columns.

\medskip

Finally for Lemma~\ref{lem:swaps_case2}, Case 2.ii consider $\Gr(4,8)$ with the block diagonal matching field $\B_1$. Consider the tableaux $T$ and $T'$:
\[
T = 
\begin{tabular}{|ccc|c|}
    \multicolumn{1}{c}{} & \multicolumn{1}{c}{$D$} & \multicolumn{1}{c}{} & \multicolumn{1}{c}{$A$} \\
    \hline
    2   & 4 & 5 & 3 \\
    3   & 5 & 6 & 1 \\
    6   & 6 & 7 & 4 \\
    7   & 8 & 8 & 5 \\
    \hline
\end{tabular} \, ,
\quad T' =
\begin{tabular}{|ccc|c|}
    \multicolumn{1}{c}{} & \multicolumn{1}{c}{$C$} & \multicolumn{1}{c}{$E$} & \multicolumn{1}{c}{$B$} \\
    \hline
    2   & {\bf\textcolor{blue}{3}} & {\bf\textcolor{blue}{4}} & 5 \\
    3   & 5 & 6 & 1 \\
    4   & 6 & 7 & 6 \\
    5   & 7 & 8 & 8 \\
    \hline
\end{tabular}\, ,
\quad \tilde{T'} =
\begin{tabular}{|ccc|c|}
    \multicolumn{1}{c}{} & \multicolumn{1}{c}{$\tilde{C}$} & \multicolumn{1}{c}{$\tilde{E}$} & \multicolumn{1}{c}{$B$} \\
    \hline
    2   & 4 & {\bf\textcolor{blue}{3}} & {\bf\textcolor{blue}{5}} \\
    3   & 5 & 6 & 1 \\
    4   & 6 & 7 & 6 \\
    5   & 7 & 8 & 8 \\
    \hline
\end{tabular}\ .
\]
In this case, we have $d_1 \ge a_3$ so we swap $c_1=3$ and $e_1=4$ in $T'$ to obtain the tableau $\tilde{T'}$.
Notice that the first rows of $T$ and $T'$ differ in last three columns whereas the first rows of $T$ and $\tilde{T'}$ differ only in the third and fourth columns.

Let us take this example a little further. To make the first two rows identical, we proceed by Case 2.i and swap $3$ and $5$ in the first row of $\tilde{T'}$. The manipulation of these tableaux is continued in Example~\ref{example:quad_gen}.
\end{example}

\subsection{\bf SAGBI basis from matching field tableaux.}\label{sec:core}
In this section, for each block diagonal matching field $\BLambda$, we will construct a collection of two-column tableaux such that every two-column tableau is row-wise equal to exactly one tableau in the collection. Then we will show that this collection is in bijection with two-column tableaux in semi-standard form. We have already seen in Example~\ref{example:semi_std_basis} that the set of semi-standard tableaux gives a SAGBI basis for the Pl\"ucker algebra with respect to the weight vector arisen from the diagonal matching field, hence this construction gives a strict generalization.
We first recall the definition of {\bf SAGBI basis}\footnote{SAGBI stands for Subalgebra Analogue to Gr\"obner Bases for Ideals. The definition of Khovanskii bases, from \cite{KM16}, generalizes the notion of a SAGBI bases to non-polynomial algebras.} from \cite{robbiano1990subalgebra} in our setting. \begin{definition}\label{sagbi}
Let $\mathcal{A}_{k,n}$ be the Pl\"ucker algebra of $\Gr(k,n)$ and let $A_{\ell} =\mathbb{K}[\inwb(P_I)]_{I \in \mathbf{I}_{k,n}}$ be the \emph{algebra} of $\BLambda$. The set of Pl\"ucker forms $\{P_I\}_{I\in \mathbf{I}_{k,n}}\subset \mathbb{K}[x_{ij}]$ is a SAGBI basis for $\mathcal{A}_{k,n}$ with respect to the weight vector $\wb_\ell$ if and only if for each $I\in \mathbf{I}_{k,n}$, the initial form $\inwb(P_I) $ is a monomial and
$\inwb(\mathcal{A}_{k,n})=A_\ell$. Here, $\wb_\ell$ is the weight vector induced by the matrix $M_\ell$. 
\end{definition}

Notice that the algebra $A_{\ell}$ has the standard grading and any monomial $P_{I_1}P_{I_2} \dots P_{I_t} \in A_{\ell}$ is identified with a tableau of size $k$ by $t$ denoted by $T_{I_1 I_2 \dots I_t}$.
We show that the subspace $[A_{\ell}]_2$ of $A_{\ell}$ spanned by the monomials of degree two has a basis
which is in a bijection with the set of semi-standard tableaux with two columns.

\begin{definition}\label{def:basis}
Fix $\Gr(k,n)$ and a block diagonal matching field $\BLambda$. We define $\Tc_{\ell}$ to be the collection of all tableaux $T$ which follow. We partition $\Tc_{\ell}$ into types each of which is described below. We also define a map $S$ taking each tableau $T \in \Tc_{\ell}$ to a semi-standard tableau $S(T)$. Note that $S(T)$ does not necessarily lie in $\Tc_{\ell}$. We write,
\[
I = \{i_1 < i_2 < \dots < i_k\} \quad \text{and} \quad
J = \{j_1 < j_2 < \dots < j_k\}.
\]

\textbf{Type 1.}\label{type:1}
\[
T = 
\begin{tabular}{|c|c|}
    \hline
    $i_1$     & $j_1$       \\
    $i_2$     & $j_2$       \\
    $\vdots$  & $\vdots$    \\
    $i_k$     & $j_k$       \\
    \hline
\end{tabular} \quad
\begin{tabular}{l}
    $i_1 \le j_1, i_2 \le j_2, \dots, i_k \le j_k$,
\end{tabular} \quad
S(T) = 
\begin{tabular}{|c|c|}
    \hline
    $i_1$     & $j_1$       \\
    $i_2$     & $j_2$       \\
    $\vdots$  & $\vdots$    \\
    $i_k$     & $j_k$       \\
    \hline
\end{tabular}\ .
\]

\medskip

\textbf{Type 2.}\label{type:2}
\[
T = 
\begin{tabular}{|c|c|}
    \hline
    $i_2$     & $j_2$       \\
    $i_1$     & $j_1$       \\
    $i_3$     & $j_3$       \\
    $\vdots$  & $\vdots$    \\
    $i_k$     & $j_k$       \\
    \hline
\end{tabular} \quad
\begin{tabular}{l}
    $i_1 \le j_1, i_2 \le j_2, \dots, i_k \le j_k$,
\end{tabular}\quad
S(T) = 
\begin{tabular}{|c|c|}
    \hline
    $i_1$     & $j_1$       \\
    $i_2$     & $j_2$       \\
    $i_3$     & $j_3$       \\
    $\vdots$  & $\vdots$    \\
    $i_k$     & $j_k$       \\
    \hline
\end{tabular}.
\]

\medskip

\textbf{Type 3A.}\label{type:3A}
\[
T =
\begin{tabular}{|c|c|}
    \hline
    $i_2$     & $j_1$       \\
    $i_1$     & $j_2$       \\
    $i_3$     & $j_3$       \\
    $\vdots$  & $\vdots$    \\
    $i_k$     & $j_k$       \\
    \hline
\end{tabular} \quad
\begin{tabular}{l}
    $i_1 \in B_{\ell,1}$,\\
    $i_2, \dots, i_k, j_1 \dots, j_k \in B_{\ell,2}$,\\
    $i_2 \le j_1$,\\
    $i_3 \le j_3, \dots, i_k \le j_k$,
\end{tabular} \quad
S(T) = 
\begin{tabular}{|c|c|}
    \hline
    $i_1$     & $j_1$       \\
    $i_2$     & $j_2$       \\
    $i_3$     & $j_3$       \\
    $\vdots$  & $\vdots$    \\
    $i_k$     & $j_k$       \\
    \hline
\end{tabular}\ .
\]

\medskip

\textbf{Type 3B(r).}\label{type:3Br}
\[
T = 
\begin{tabular}{|c|c|}
    \hline
    $i_2$           & $j_1$       \\
    $i_1$           & $j_2$       \\
    $i_3$           & $j_3$       \\
    $\vdots$        & $\vdots$    \\
    $i_{r-1}$       & $j_{r-1}$   \\
    $i_{r}$         & $j_{r}$   \\
    $i_{r+1}$       & $j_{r+1}$   \\
    $\vdots$        & $\vdots$    \\
    $i_k$           & $j_k$       \\
    \hline
\end{tabular} \quad
\begin{tabular}{l}
    $i_1 \in B_{\ell,1}$, \\
    $i_2, \dots, i_k, j_1 \dots, j_k \in B_{\ell,2}$,\\
    $j_1 < j_2 \le i_2$,\\
    $r = \min\{t \ge 2 :j_{t+1} > i_{t}\}$,\\
    $i_3 > j_3, \dots, i_r > j_r$,  \\
    $i_{r+1} \le j_{r+1}, \dots, i_k \le j_k$, \\
\end{tabular}\quad
S(T) = 
\begin{tabular}{|c|c|}
    \hline
    $i_1$           & $j_1$       \\
    $j_2$           & $i_2$       \\
    $j_3$           & $i_3$       \\
    $\vdots$        & $\vdots$    \\
    $j_{r-1}$       & $i_{r-1}$   \\
    $j_{r}$         & $i_{r}$   \\
    $i_{r+1}$       & $j_{r+1}$   \\
    $\vdots$        & $\vdots$    \\
    $i_k$           & $j_k$       \\
    \hline
\end{tabular}\ .
\]

\medskip

\textbf{Type 3C(s).}\label{type:3Cs}
\[
T = 
\begin{tabular}{|c|c|}
    \hline
    $i_2$           & $j_1$       \\
    $i_1$           & $j_2$       \\
    $i_3$           & $j_3$       \\
    $\vdots$        & $\vdots$    \\
    $i_{s}$         & $j_{s}$     \\
    $i_{s+1}$       & $j_{s+1}$     \\
    $\vdots$        & $\vdots$    \\
    $i_k$           & $j_k$       \\
    \hline
\end{tabular} \quad
\begin{tabular}{l}
    $i_1, j_1, j_2, \dots, j_s \in B_{\ell,1}$,\\
    $i_2, \dots, i_k, j_{s+1} \dots, j_k \in B_{\ell,2}$,\\
    $i_1 \le j_1 < j_2$,\\
    $i_{s+1} \le j_{s+1}, \dots, i_k \le j_k$,
\end{tabular} \quad
S(T) = 
\begin{tabular}{|c|c|}
    \hline
    $i_1$           & $j_1$       \\
    $j_2$           & $i_2$       \\
    $j_3$           & $i_3$       \\
    $\vdots$        & $\vdots$    \\
    $j_{s}$         & $i_{s}$     \\
    $i_{s+1}$       & $j_{s+1}$     \\
    $\vdots$        & $\vdots$    \\
    $i_k$           & $j_k$       \\
    \hline
\end{tabular}\ .
\]

\medskip

\textbf{Type 3D(s).}\label{type:3Ds}
\[
T = 
\begin{tabular}{|c|c|}
    \hline
    $i_2$           & $j_1$       \\
    $i_1$           & $j_2$       \\
    $i_3$           & $j_3$       \\
    $\vdots$        & $\vdots$    \\
    $i_{s}$         & $j_{s}$     \\
    $i_{s+1}$       & $j_{s+1}$     \\
    $\vdots$        & $\vdots$    \\
    $i_k$           & $j_k$       \\
    \hline
\end{tabular} \quad
\begin{tabular}{l}
    $i_1, j_1, j_2, \dots, j_s \in B_{\ell,1}$,\\
    $i_2, \dots, i_k, j_{s+1} \dots, j_k \in B_{\ell,2}$,\\
    $i_1 \ge j_2$,\\
    $i_{s+1} \le j_{s+1}, \dots, i_k \le j_k$,
\end{tabular} \quad
S(T) = 
\begin{tabular}{|c|c|}
    \hline
    $i_1$           & $j_1$       \\
    $j_2$           & $i_2$       \\
    $j_3$           & $i_3$       \\
    $\vdots$        & $\vdots$    \\
    $j_{s}$         & $i_{s}$     \\
    $i_{s+1}$       & $j_{s+1}$     \\
    $\vdots$        & $\vdots$    \\
    $i_k$           & $j_k$       \\
    \hline
\end{tabular}\ .
\]
\end{definition}

\begin{lemma}\label{lem:basis_span}
The tableaux $\Tc_{\ell}$ span $[A_{\ell}]_2$.
\end{lemma}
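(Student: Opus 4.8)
The plan is to show that an arbitrary degree-two monomial $P_I P_J \in [A_\ell]_2$, identified with the two-column tableau $T_{IJ}$ (with each column ordered according to $\BLambda$), can be rewritten as $\pm\inwb$ of a product of Pl\"ucker variables indexed by a tableau in $\Tc_\ell$, modulo lower- and equal-weight terms that do not matter at the level of the associated graded algebra. Concretely, since $T_{IJ}$ and any row-wise equal tableau correspond to monomials of the same $\wb_\ell$-weight, it suffices to show that every two-column tableau is row-wise equal to some tableau in $\Tc_\ell$: the defining lists in Definition~\ref{def:basis} exhaust, up to row-wise equality, all combinatorial possibilities for the pair $(I,J)$. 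So the proof is a case analysis on the \emph{types} of the two columns $I$ and $J$ in the sense of Definition~\ref{def:column_type}, i.e. on $(|I \cap B_{\ell,1}|, |J \cap B_{\ell,1}|)$, using that row-wise equal tableaux preserve the number of columns of each type (so the unordered pair of types of $\{I,J\}$ is an invariant).

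First I would dispose of the cases where neither column is of type $1$: then both columns lie in $X$, both are written in increasing order, and after replacing $T_{IJ}$ by its semi-standard form (via the $\min/\max$ construction recalled in Example~\ref{example:semi_std_basis}, which is a sequence of quadratic relations and hence preserves $\wb_\ell$-weight) we land in Type~1. Next, the case where exactly one column is of type $1$: say $I$ has type $\ne 1$ and $J$ has type $1$, so $J$ is written with its first two entries swapped. Here one re-sorts the first-row/second-row entries across the two columns as far as semi-standardness permits; this produces Type~2 (when $I$ is also the larger column after sorting) or, when $I$ itself can be brought to type-$1$ appearance but $J$ retains the transposition, one checks this still collapses into Type~1 or Type~2 after a final swap. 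The genuinely substantial case is when \emph{both} columns are of type $1$. Then both $I$ and $J$ have the form $(\,\cdot\,$, $\text{(element of }B_{\ell,1}), \text{(elements of }B_{\ell,2})\,)$ with the first two rows transposed, and after putting the type-$1$ block into semi-standard form one has $i_1, j_1 \in B_{\ell,1}$ in the first row, $i_2, j_2 \in B_{\ell,2}$ in the second, and $i_3<\dots<i_k$, $j_3<\dots<j_k$ in $B_{\ell,2}$ below. One compares $i_1$ with $j_1, j_2$ and then tracks where the "overlap" between the $B_{\ell,2}$-parts breaks: this is exactly the index $r = \min\{t\ge 2 : j_{t+1} > i_t\}$ (resp. the index $s$) appearing in Types 3A, 3B($r$), 3C($s$), 3D($s$). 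Splitting on whether $j_2 \le i_2$ or $j_2 > i_2$, and on whether $i_1 \le j_1$, $j_1 < i_1 < j_2$, or $i_1 \ge j_2$, produces precisely these four sub-families; in each one a short explicit sequence of swaps (of the kind illustrated in Examples~\ref{example:swap1} and \ref{example:swap2}, and governed in spirit by Lemmas~\ref{lem:swaps_case1} and \ref{lem:swaps_case2}) transforms $T_{IJ}$ into the displayed member of $\Tc_\ell$.

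The main obstacle is bookkeeping in the both-columns-type-$1$ case: one must verify that the column-validity constraints (strict increase within each column under $\BLambda$) are preserved at every swap, that the "pivot" indices $r$ and $s$ are well-defined under the stated hypotheses (e.g. that the set $\{t \ge 2 : j_{t+1}>i_t\}$ is nonempty, which uses $n$ being finite / the bottom entries being forced), and that the cases 3A--3D are genuinely exhaustive, covering every comparison pattern between the $B_{\ell,2}$-tails of $I$ and $J$. Once those checks are in place, each monomial $P_I P_J$ equals $\pm P_{I'} P_{J'}$ in $A_\ell$ with $T_{I'J'} \in \Tc_\ell$ (the sign coming from $\sgn(\BLambda(\cdot))$ in the monomial map (\ref{eqn:monomialmap})), so $\Tc_\ell$ spans $[A_\ell]_2$. $\qed$
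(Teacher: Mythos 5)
Your overall reduction --- that it suffices to show every two-column tableau $T_{IJ}$ is row-wise equal to some member of $\Tc_\ell$, argued by cases on the column types --- is exactly the right strategy, but your assignment of type-patterns to the families of Definition~\ref{def:basis} is wrong, and the error is not repairable as stated. In $\Tc_\ell$ the families correspond to fixed pairs of column types: Type 1 has both columns increasing (so neither column of type $1$), Type 2 has both columns of type $1$, and Types 3A--3D all have first column of type $1$ and second column of type $0$ (3A, 3B) or of type $s\ge 2$ (3C, 3D). Since row-wise equal tableaux have the same multiset of column types (the counts of $B_{\ell,1}$-entries in each row determine the type counts), a tableau whose two columns are both of type $1$ can only be row-wise equal to a Type 2 tableau, and a tableau with exactly one type-$1$ column can only be row-wise equal to one of Types 3A--3D. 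You have these two cases swapped: you send ``exactly one column of type $1$'' to Types 1/2 (impossible), and you treat ``both columns of type $1$'' as the hard case producing 3A--3D (also impossible; that case is in fact the easy one, handled by the coordinatewise $\min/\max$ construction exactly as in Example~\ref{example:semi_std_basis}, landing in Type 2). Relatedly, your description of the both-type-$1$ configuration puts the $B_{\ell,1}$-entries in the first row, whereas under $\BLambda$ a type-$1$ column has its $B_{\ell,1}$-entry in the second row.

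The genuinely substantial case, which your plan never actually addresses, is the mixed one: $\BLambda(I)=(12)$ and $\BLambda(J)=\id$. There one must split on the type of $J$ ($0$ versus $s\ge 2$) and then on comparisons such as $i_2<j_2$ versus $i_2\ge j_2$ (giving 3A versus 3B($r$) with $r=\min\{t\ge 2: j_{t+1}>i_t\}$) and $i_1\le j_1$ versus $i_1>j_1$ (giving 3C($s$) versus, after one swap of $i_1$ and $j_2$ when $j_1<i_1<j_2$, 3D($s$)), checking in each subcase that the proposed columns $I'$, $J'$ are strictly increasing and that the resulting tableau satisfies the defining inequalities of its family. Without that analysis, and with the type bookkeeping corrected, the proposal does not establish that $\Tc_\ell$ spans $[A_\ell]_2$.
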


\begin{proof}
Let $I = \{i_1 < i_2 < \dots < i_k\}$ and $J = \{j_1 < j_2 < \dots < j_k\}$ be two $k$-subsets of $[n]$. Recall the notation for tableaux in terms of their columns defined on Page~\pageref{notation:tableau}. We show that there is a tableau $T_{I' J'} \in \Tc_{\ell}$ which is row-wise equal to $T_{IJ}$. We take cases on $I$ and $J$.
\begin{figure}
    \centering
    \begin{tabular}{|c|c|}
        \multicolumn{2}{c}{Case 1} \\
        \hline
        $i_1$     & $j_1$       \\
        $i_2$     & $j_2$       \\
        $i_3$     & $j_3$       \\
        $\vdots$  & $\vdots$    \\
        $i_k$     & $j_k$       \\
        \hline 
    \end{tabular} \qquad
    \begin{tabular}{|c|c|}
        \multicolumn{2}{c}{Case 2} \\
        \hline
        $i_2$     & $j_2$       \\
        $i_1$     & $j_1$       \\
        $i_3$     & $j_3$       \\
        $\vdots$  & $\vdots$    \\
        $i_k$     & $j_k$       \\
        \hline
    \end{tabular} \qquad
    \begin{tabular}{|c|c|}
    \multicolumn{2}{c}{Case 3} \\
    \hline
    $i_2$     & $j_1$       \\
    $i_1$     & $j_2$       \\
    $i_3$     & $j_3$       \\
    $\vdots$  & $\vdots$    \\
    $i_k$     & $j_k$       \\
    \hline
\end{tabular}
    
    \caption{Tableaux $T_{IJ}$ in Lemma~\ref{lem:basis_span} for cases 1, 2 and 3.}
    \label{tab:tij_span}
\end{figure}

\textbf{Case 1.} $\BLambda(I) = \BLambda(J) = id$. The entries of $T_{IJ}$ can be seen in Case 1 of Figure~\ref{tab:tij_span}.
Let
\[
I' = \{\min\{i_1, j_1\} < \dots < \min\{i_k, j_k\} \}, \quad 
J' = \{\max\{i_1, j_1\} < \dots < \max\{i_k, j_k\} \}.
\]
Then $T_{I' J'}$ and $T_{I J}$ are row-wise equal and $T_{I' J'}$ is a tableau of type 1 in $\Tc_{\ell}$.

\textbf{Case 2.} $\BLambda(I) = \BLambda(J) = (12)$. The entries of $T_{IJ}$ can be seen in Case 2 of Figure~\ref{tab:tij_span}. Similarly to case 1, let
\[
I' = \{\min\{i_1, j_1\} < \dots < \min\{i_k, j_k\} \}, \quad 
J' = \{\max\{i_1, j_1\} < \dots < \max\{i_k, j_k\} \}.
\]

Then $T_{I' J'}$ and $T_{I J}$ are row-wise equal and $T_{I' J'}$ is a tableau of type 2 in $\Tc_{\ell}$.

\textbf{Case 3.} $\BLambda(I) \neq \BLambda(J)$. Without loss of generality we assume $\BLambda(I) = (12)$ and $\BLambda(J) = id$. The entries of $T_{IJ}$ can be seen in Case 3 of Figure~\ref{tab:tij_span}.
Since $\BLambda(I) = (12)$ it follows that $i_1 \in B_{\ell,1}$ and $i_2, \dots, i_k \in B_{\ell,2}$. We now take cases on the type of $J$.

\textbf{Case 3a.} 
The type of $J$ is $0$ and $i_2 < j_2$. Let
\[
I' = \{i_1 < \min(i_2, j_1) < \min(i_3, j_3) < \dots < \min(i_k, j_k) \},
\]
\[
J' = \{\max(i_2, j_1) < j_2 < \max(i_3, j_3) < \dots < \max(i_k, j_k) \}.
\]
So we have $T_{IJ}$ is row-wise equal to $T_{I'J'}$ and $T_{I'J'}$ is a tableau of type 3A in $\Tc_{\ell}$.

\textbf{Case 3b.} 
The type of $J$ is $0$ and $i_2 \ge j_2$. Let $r = \min\{t \ge 2 : j_{t+1} > i_t \}$. Let
\[
I' = \{i_1 < i_2 < i_3 < \dots < i_r < \min(i_{r+1}, j_{r+1}) < \dots < \min(i_k, j_k) \},
\]
\[
J' = \{j_1 < j_2 < j_3 < \dots < j_r < \max(i_{r+1}, j_{r+1}) < \dots < \max(i_k, j_k) \}.
\]
Note that for each $t \in \{3, \dots, r\}$ we have that $j_t \le i_{t-1} < i_t$. So $T_{I'J'}$ is a tableau of type 3B(r) in $\Tc_{\ell}$ and $T_{IJ}$ is row-wise equal to $T_{I'J'}$.

\textbf{Case 3c.} 
The type of $J$ is $s \ge 2$ and $i_1 \le j_1$. Then let
\[
I' = \{i_1 < i_2 < i_3 < \dots < i_s < \min(i_{s+1}, j_{s+1}) < \dots < \min(i_k, j_k) \},
\]
\[
J' = \{j_1 < j_2 < j_3 < \dots < j_s < \max(i_{s+1}, j_{s+1}) < \dots < \max(i_k, j_k) \}.
\]
So we have $T_{IJ}$ is row-wise equal to $T_{I'J'}$ and $T_{I'J'}$ is a tableau of type 3C(s) in $\Tc_{\ell}$.

\textbf{Case 3d.} 
The type of $J$ is $s \ge 2$ and $i_1 > j_1$. Now if $i_1 < j_2$ then we may apply a quadratic relation to swap $i_1$ and $j_2$. So without loss of generality we may assume that $i_1 \ge j_2$. Let

\[
I' = \{i_1 < i_2 < i_3 < \dots < i_s < \min(i_{s+1}, j_{s+1}) < \dots < \min(i_k, j_k) \},
\]
\[
J' = \{j_1 < j_2 < j_3 < \dots < j_s < \max(i_{s+1}, j_{s+1}) < \dots < \max(i_k, j_k) \}.
\]
So we have $T_{IJ}$ is row-wise equal to $T_{I'J'}$ and $T_{I'J'}$ is a tableau of type 3D(s) in $\Tc_{\ell}$. And so we have shown that $\Tc_{\ell}$ spans $[A_{\ell}]_2$. \qed

\end{proof}

A visual representation of the proof is shown in Figure~\ref{flowchart:gr_span_pf} with key given in Figure~\ref{flowchart:gr_key}. In particular Figure~\ref{flowchart:gr_span_pf} illustrates each type of tableau appearing in $\Tc_{\ell}$.

\begin{figure}
    \centering
    \includegraphics[scale = 0.8]{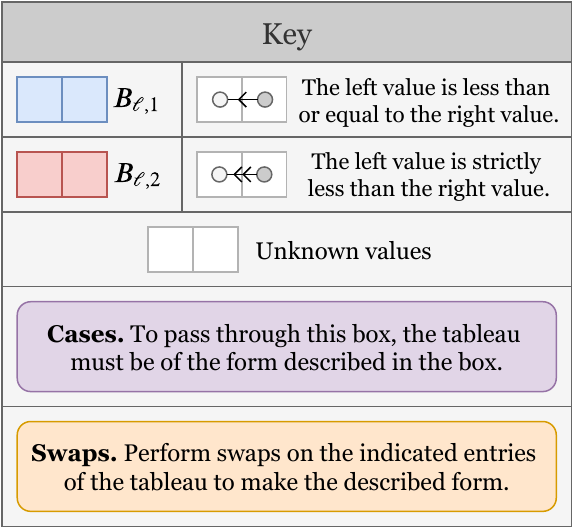}
    \caption{The key for Figure~\ref{flowchart:gr_span_pf}. We assume that there is a fixed matching field $\BLambda$. The blue and red boxes represent entries of the tableau which belong to $B_{\ell,1}$ and $B_{\ell,2}$ respectively. Arrows between boxes represent inequalities between their respective entries. An arrow with a double head indicates that this inequality is strict. Purple boxes are requirements for the tableau and in particular do not alter the tableau. Orange boxes alter the tableau by, possibly, performing swaps between indicated entries until the tableau is of the described form.}
    \label{flowchart:gr_key}
\end{figure}

\begin{figure}
    \centering
    \includegraphics[scale = 0.8]{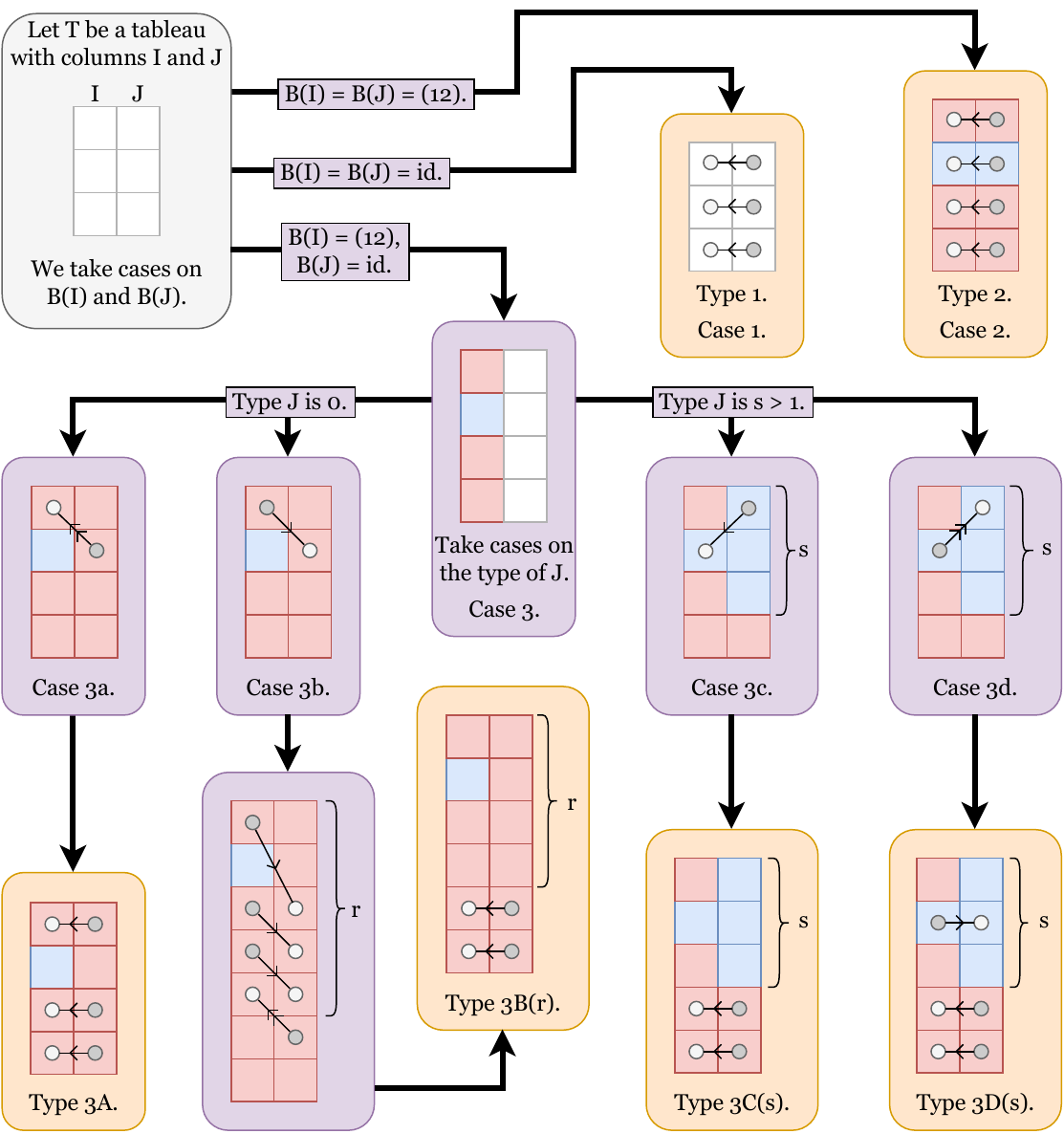}
    \caption{A depiction of the proof of Lemma~\ref{lem:basis_span}. For notation we write $\B$ for the matching field $\BLambda$ which is fixed throughout the proof. The type of column can be seen as the number of blue boxes appearing in that column. The orange boxes give a visual representation of the different types of tableaux in $\Tc_{\ell}$, see Definition~\ref{def:basis}. Note that the swaps that occur in the orange boxes interchange entries in the same row and each column is ordered with respect to the matching field.}
    \label{flowchart:gr_span_pf}
\end{figure}

\begin{lemma}\label{lem:basis_indep}
The tableaux $\Tc_{\ell}$ are linearly independent in $[A_{\ell}]_2$.
\end{lemma}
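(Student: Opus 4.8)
The plan is to exploit the map $S$ from Definition~\ref{def:basis}, which sends each $T \in \Tc_\ell$ to a semi-standard two-column tableau $S(T)$. Since the monomials indexed by semi-standard two-column tableaux are linearly independent in $\mathcal{A}_{k,n}$ — this is the classical fact underlying the straightening law, cf.~\cite[Theorem 14.11]{MS05} — and since passing to initial forms with respect to $\wb_\ell$ can only preserve or collapse linear independence, it suffices to show that $S$ is \emph{injective} on $\Tc_\ell$. Concretely, the monomial $\inwb(P_I)\inwb(P_J)$ attached to $T_{IJ}\in\Tc_\ell$ is, up to sign, the monomial of the semi-standard tableau $S(T_{IJ})$; once injectivity of $S$ is established, distinct tableaux in $\Tc_\ell$ give distinct semi-standard tableaux, hence distinct (and linearly independent) monomials in $[A_\ell]_2$.

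First I would verify the claim that $\inwb(P_I)\inwb(P_J) = \pm\,\xb_{S(T_{IJ})}$ for every $T=T_{IJ}\in\Tc_\ell$: this is a direct check, type by type, that the multiset of matrix positions $(\sigma(r),i_r)$ recorded by the two columns of $T$ coincides with the multiset recorded by the two columns of $S(T)$ read with the diagonal (semi-standard) ordering. For Types 1, 2, 3A this is immediate since $S$ only reorders entries within columns. For Types 3B(r), 3C(s), 3D(s) one uses that swapping $i_t\leftrightarrow j_t$ in rows $2,\dots,r$ (resp.\ $2,\dots,s$) together with the transposition in the first two rows reproduces exactly the diagonal reading of $S(T)$; the block structure $i_1\in B_{\ell,1}$, $i_2,\dots,i_k,j_1,\dots\in B_{\ell,2}$ (or the analogous $B_{\ell,1}$/$B_{\ell,2}$ split) ensures no cross terms interfere.

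The main step is injectivity of $S$. I would argue as follows. Suppose $S(T_{IJ}) = S(T_{I'J'})$ for two tableaux in $\Tc_\ell$. The semi-standard tableau $S(T)$ determines, for each row, the unordered pair of entries; moreover the number of columns of each type is a row-wise invariant, so $T_{IJ}$ and $T_{I'J'}$ have the same type pattern. I would then recover $T$ from $S(T)$ together with the type data: knowing which \emph{type} (1, 2, 3A, 3B(r), 3C(s), 3D(s)) the tableau belongs to, and the value of $r$ or $s$, one can invert the explicit reordering in each row. The content of the argument is that the defining inequalities of each type are mutually exclusive and are detectable from $S(T)$ — e.g.\ Type 3B(r) forces $j_1 < j_2 \le i_2$ and $i_t > j_t$ for $3\le t\le r$, which in $S(T)$ appears as the entries in rows $2,\dots,r$ of the \emph{left} column exceeding those of the right column exactly in those rows, while the block membership (blue vs.\ red, i.e.\ $B_{\ell,1}$ vs.\ $B_{\ell,2}$) pins down whether we are in the 3A/3B or 3C/3D family and the value of $s$. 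So from $S(T)$ one reads off the type, then reconstructs $I$ and $J$ uniquely, hence $T_{IJ}=T_{I'J'}$.

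I expect the bookkeeping in proving that the type parameters are reconstructible from $S(T)$ to be the main obstacle — in particular checking that the inequality patterns defining Types 3A, 3B(r), 3C(s), 3D(s) really are pairwise incompatible and are each visible after applying $S$, so that no two distinct descriptions produce the same semi-standard tableau. This is where a careful case analysis, ideally organized along the same blue/red block diagrams used in Figure~\ref{flowchart:gr_span_pf}, does the work; everything else is formal. Combining injectivity of $S$ with the known independence of semi-standard monomials then yields that $\Tc_\ell$ is linearly independent in $[A_\ell]_2$. \qed
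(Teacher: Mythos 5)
There is a genuine gap, and it sits at the very first step: the claim that $\inwb(P_I)\inwb(P_J)=\pm\,\xb_{S(T_{IJ})}$ (the diagonal reading of $S(T)$) is false for every type except Type~1. The monomial $\phi_{\BLambda}(P_IP_J)$ records the \emph{row position} of each entry of the tableau $T_{IJ}$, so reordering entries within a column -- which is exactly what $S$ does -- changes the monomial. Concretely, for a Type~2 tableau the first row of $T$ contains $\{i_2,j_2\}$ and contributes $x_{1i_2}x_{1j_2}$, whereas the first row of $S(T)$ contains $\{i_1,j_1\}$ and its diagonal reading contributes $x_{1i_1}x_{1j_1}$; similarly for Type~3A the first rows are $\{i_2,j_1\}$ versus $\{i_1,j_1\}$. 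So your "direct check, type by type" cannot go through, and injectivity of $S$ does not yield distinctness of the monomials $\inwb(P_I)\inwb(P_J)$ in $[A_\ell]_2$; the independence of the diagonal semi-standard monomials cannot be transported along $S$, and the remark that initial forms "preserve or collapse" independence points in the wrong direction anyway (collapsing is precisely what you must rule out).

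What is actually needed -- and what the paper does -- is to work directly with the $\wb_\ell$-initial monomials: since $\Ker(\phi_{\BLambda})$ is a binomial (toric) ideal, two products $P_IP_J$ and $P_{I'}P_{J'}$ have proportional images exactly when $T_{IJ}$ and $T_{I'J'}$ are row-wise equal, so linear independence of $\Tc_\ell$ reduces to showing that two row-wise equal tableaux of $\Tc_\ell$ coincide. The paper proves this by a case analysis on Types 1, 2, 3A--3D, using the defining inequalities together with membership in $B_{\ell,1}$ versus $B_{\ell,2}$ to force the columns to agree row by row. Your case analysis instead reconstructs $T$ from $S(T)$; that is essentially the content of Lemma~\ref{lem:basis_bijection} (the explicit inverse of $S$), which the paper uses later only to compare $\dim[A_\ell]_2$ with the diagonal case in Theorem~\ref{thm:SAGBI}, not to prove independence. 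To repair your argument you would have to replace "row-wise data of $S(T)$" by "row-wise data of $T$ itself" throughout, at which point you are carrying out the paper's proof rather than a shortcut through the semi-standard monomials.
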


\begin{proof}
Consider the map $\phi_{\BLambda}$ from Equation~\eqref{eqn:monomialmap} in  Page~\pageref{eqn:monomialmap}. Since $\Ker(\phi_{\BLambda})$ is generated by binomials, it suffices to show that if $T_{IJ}$ and $T_{I',J'} \in \Tc_{\ell}$ are row-wise equal then they are equal up to reordering the columns. 
 
Recall that the type of a column $I$ in $T_{IJ}$ is $|I \cap B_{\ell,1}|$ and if $T_{IJ}$ and $T_{I'J'}$ are row-wise equal then their columns must have the same type up to reordering. So we may assume that
$|I \cap B_{\ell,1}| = |I' \cap B_{\ell,1}|$ and $s = |J \cap B_{\ell,1}| = |J' \cap B_{\ell,1}|$.
Now we take cases on the type of $T_{IJ}$ in $\Tc_{\ell}$.

\smallskip

\textbf{Type 1.} In this case, we have that both $T_{IJ}$ and $T_{I'J'}$ are in semi-standard form. Since the rows are weakly increasing, it follows that these tableaux must be equal.

\smallskip

\textbf{Type 2.} Again we see that $T_{IJ}$ and $T_{I'J'}$ are equal since the rows are weakly-increasing.

\smallskip

\textbf{Type 3.} We assume that $\BLambda(I) = (12)$ and $\BLambda(J) = id$. Note that the columns $J$ and $J'$ have the same type denoted $s$. If $s = 0$ then $T_{IJ}$ and $T_{I'J'}$ are either of type 3A or 3B, see Page~\pageref{type:3A}. Otherwise $s \ge 2$ and $T_{IJ}$ and $T_{I'J'}$ are either of type 3C or 3D, see Page~\pageref{type:3Cs}. In both cases we write
\[
I = \{i_1 < i_2 < \dots < i_k \}, \quad
J = \{j_1 < j_2 < \dots < j_k \},
\]
\[
I' = \{i'_1 < i'_2 < \dots < i'_k \}, \quad
J' = \{j'_1 < j'_2 < \dots < j'_k \}.
\]

{\bf Case 1.} Let $s = 0$, i.e. $T_{IJ}$ and $T_{I'J'}$ are either of type 3A or 3B. Since $i_1, i'_1$ are the only elements of $I$ and $J$ that belong to $B_{\ell,1}$, it follows that $i_1 = i'_1$. Hence, by row-wise equality of $T_{IJ}$ and $T_{I'J'}$ we have $j_2 = j'_2$.

If $i_2 < j_2$ then we have that $T_{IJ}$ is of type 3A. By row-wise equality of $T_{IJ}$ and $T_{I'J'}$ we have $\{i'_2, j'_1\} = \{i_2, j_1 \}$ and so $i'_2 < j'_2$. Therefore $T_{I'J'}$ is also of type 3A. By definition of type 3A, the rows are weakly increasing and so $T_{IJ}$ and $T_{I'J'}$ are equal.

If, on the other hand, $i_2 \ge j_2$ then $T_{IJ}$ is of type 3B. Note that $j'_1 < j'_2 = j_2$ so $i_2 \neq j'_1$. By row-wise equality of $T_{IJ}$ and $T_{I'J'}$ we deduce that $i_2 = i'_2$ and $j_1 = j'_1$. Therefore $T_{I'J'}$ is also of type 3B. For each $t \in \{2, \dots, r \}$ where $r = \min\{t \ge 2: j_{t+1} > i_t \}$ we show that $i_t = i'_t$. 

Suppose by contradiction that $i_t \neq i'_t$ for some $t \in \{2, \dots, r \}$. We take $t$ to be the smallest such value. Note that $i_2 = i'_2$ so we may assume $3 \le t \le r$. By row-wise equality of $T_{IJ}$ and $T_{I'J'}$, since $i'_t \neq i_t$, we have $i'_t = j_t$. By definition of $r$ we have $j_t \le i_{t-1}$. By assumption $t$ was minimal so $i_{t-1} = i'_{t-1}$. So we have deduced that $i'_t \le i'_{t-1}$, a contradiction. Therefore $i_1 = i'_1, i_2 = i'_2, \dots, i_r = i'_r$. 

Next, we will show that $j'_{r+1} > i'_r$. By row-wise equality of $T_{IJ}$ and $T_{I'J'}$, either $j'_{r+1} = j_{r+1}$ or $j'_{r+1} = i_{r+1}$. So either $j'_{r+1} = j_{r+1} > i_r = i'_r$ or $j'_{r+1} = i_{r+1} > i_{r} = i'_{r}$. So we have shown that $r = \min\{t \ge 2: j'_{t+1} > i'_t \}$. Therefore both $T_{IJ}$ and $T_{I'J'}$ are of type 3B(r). Therefore we have $i_{r+1} \le j_{r+1}, \dots, i_k \le j_k$ and $i'_{r+1} \le j'_{r+1}, \dots, i'_k \le j'_k$. By row-wise equality of $T_{IJ}$ and $T_{I'J'}$ it follows that $i_{r+1} = i'_{r+1}, \dots, i_k = i'_k$. And so, we have shown that $I = I'$, hence $J = J'$ and the tableaux are equal. This completes the proof for the case where $T_{IJ}$ and $T_{I'J'}$ are either of type 3A or 3B.

\smallskip

{\bf Case 2.} Let $s \ge 2$, i.e. $T_{IJ}$ and $T_{I'J'}$ are either of type 3C or 3D. Consider the first row of the tableaux $T_{IJ}$. Since $i_2, i'_2 \in B_{\ell,2}$ and $j_1, j'_1 \in B_{\ell,1}$ it follows that $i_2 = i'_2$ and $j_1 = j'_1$. 

If $i_1 \le j_1$ then $T_{IJ}$ is of type 3C(s) and $i_1 = i'_1$ and $j_2 = j'_2$. So $T_{I'J'}$ is also of type 3C(s). Since $j_3, \dots, j_s,j'_3, \dots, j'_s \in B_{\ell,1}$ and $i_3, \dots, i_s, i'_3, \dots, i'_s \in B_{\ell,2}$ it follows that $i_3 = i'_3, \dots, i_s = i'_s$ and $j_3 = j'_3, \dots, j_s = j'_s$. Since $i_{s+1} \le j_{s+1}, \dots, i_k \le j_k$ and $i'_{s+1} \le j'_{s+1}, \dots, i'_k \le j'_k$ so by row-wise equality of $T_{IJ}$ and $T_{I'J'}$ we deduce that $i_{s+1} = i'_{s+1}, \dots, i_k = i'_k$. And so $T_{IJ}$ and $T_{I'J'}$ are equal.

If $i_1 > j_1$ then we have that $T_{IJ}$ is not of type 3C(s) and so it must be of type 3D(s). Therefore $i_1 \le j_2$ and so both entries of the second row of $T_{I'J'}$ are greater than $j_1$. So $T_{I'J'}$ is also not of type 3C(s) and so it must be of type 3D(s). By the definition of 3D(s) we have $i_1 \le j_2$ and $i'_1 \le j'_2$. Therefore, $i_1 = i'_1$ and $j_2 = j'_2$. Since $j_3, \dots, j_s,j'_3, \dots, j'_s \in B_{\ell,1}$ and $i_3, \dots, i_s, i'_3, \dots, i'_s \in B_{\ell,2}$ it follows that $i_3 = i'_3, \dots, i_s = i'_s$ and $j_3 = j'_3, \dots, j_s = j'_s$. Since $i_{s+1} \le j_{s+1}, \dots, i_k \le j_k$ and $i'_{s+1} \le j'_{s+1}, \dots, i'_k \le j'_k$, by row-wise equality of $T_{IJ}$ and $T_{I'J'}$ we deduce that $i_{s+1} = i'_{s+1}, \dots, i_k = i'_k$. And so $T_{IJ}$ and $T_{I'J'}$ are equal. Hence, the proof is complete for the case when $T_{IJ}$ and $T_{I'J'}$ are either of type 3C or 3D. So we have shown that $\Tc_{\ell}$ is a linearly independent set. \qed

\end{proof}

\begin{lemma}\label{lem:basis_bijection}
The map $T \mapsto S(T)$ is a bijection between $\Tc_{\ell}$ and the set of semi-standard tableaux with two columns and $k$ rows.
\end{lemma}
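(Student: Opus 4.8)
The plan is to build an explicit inverse for $S$ by running the classification of Definition~\ref{def:basis} in reverse, in parallel with the case analysis already carried out in the proof of Lemma~\ref{lem:basis_span}. The first step is organizational. For a semi-standard tableau $U$ with columns $C_1$ and $C_2$ written increasing, set $p=|C_1\cap B_{\ell,1}|$ and $q=|C_2\cap B_{\ell,1}|$; weak increase along the rows of $U$ forces $p\ge q$. Comparing the column types (Definition~\ref{def:column_type}) of the tableau $S(T)$ across the parts of Definition~\ref{def:basis}, the pair $(p,q)$ already tells us which family $T$ must come from: Type~1 when $q=0$ with $p\neq 1$, or when $q\ge 2$; Type~2 when $p=q=1$; Type~3A or 3B when $(p,q)=(1,0)$; and Type~3C$(p)$ or 3D$(p)$ when $p\ge 2$ and $q=1$. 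One also checks directly from the listed inequalities that every $S(T)$ is indeed semi-standard and that the parts of Definition~\ref{def:basis} are mutually exclusive, so that $S$ is a well-defined map from $\Tc_\ell$ into the set of semi-standard $k\times 2$ tableaux with entries in $[n]$.

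The second step is to define the inverse $R$ in each case and check that $S$ and $R$ are mutually inverse. For Types~1 and 2 this is immediate: $S$ only exchanges the entries of the first two rows within fixed columns, so $R$ reorders the columns of $U$ and records $I=C_1$, $J=C_2$. For the mixed cases one reads off $i_1$ and $j_1$ as distinguished extreme entries of $C_1$ and $C_2$, uses the comparison between the relevant small entries of $C_1,C_2$ to decide 3A vs.\ 3B (resp.\ 3C vs.\ 3D), and extracts the index where the ``descent pattern'' of $U$ terminates to recover the parameter $r$ (resp.\ $s$). In each case one then verifies, using only the semi-standardness of $U$, that the reconstructed sets $I,J$ are genuinely increasing and satisfy every inequality stipulated in Definition~\ref{def:basis}; this is exactly the construction of Lemma~\ref{lem:basis_span} restricted to an already semi-standard target, run backwards.

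The hard part is precisely this inequality bookkeeping for the mixed Types~3B, 3C and 3D: checking that the index extracted from $U$ is forced and matches the minimality condition of Definition~\ref{def:basis}, and that the candidate columns come out strictly increasing — in the degenerate configurations one has to argue as in the sub-case analysis of Lemma~\ref{lem:swaps_case2}. To shorten the argument one can establish only surjectivity of $S$ and deduce injectivity for free: by Lemmas~\ref{lem:basis_span} and~\ref{lem:basis_indep}, $\Tc_\ell$ is a $\mathbb{K}$-basis of $[A_\ell]_2$, and since $\inwb(P_I)\inwb(P_J)=\inwb(P_IP_J)$ (each $\inwb(P_I)$ being a single monomial, as $\BLambda$ is coherent) we have $[A_\ell]_2\subseteq[\inwb(\mathcal{A}_{k,n})]_2$, which has the same dimension as $[\mathcal{A}_{k,n}]_2$ (the Hilbert function is preserved under passing to the initial algebra); by standard monomial theory this last number equals the number of semi-standard $k\times 2$ tableaux with entries in $[n]$ (cf.~\cite[Theorem~14.11]{MS05}). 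Hence $|\Tc_\ell|\le\#\{\text{semi-standard }k\times 2\text{ tableaux}\}$, and a surjection between finite sets whose domain is no larger than its codomain is automatically a bijection, so once the inverse $R$ above is shown to land in $\Tc_\ell$ the proof is complete.
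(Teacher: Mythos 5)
Your core construction coincides with the paper's: its proof of Lemma~\ref{lem:basis_bijection} also classifies a semi-standard tableau by where the top two entries of each column sit relative to $B_{\ell,1}$ and $B_{\ell,2}$ (equivalently, by your pair $(p,q)$), splits the mixed cases by the comparisons $i_2\le j_1$ versus $i_2>j_1$ and $j_1<i_2$ versus $j_1\ge i_2$, and simply writes down the candidate preimages (the tableaux $T_1,\dots,T_4$ of Figure~\ref{tab:S_inv}), leaving the verification of the inequalities of Definition~\ref{def:basis} largely implicit -- exactly the step you call the hard part. Where you genuinely diverge is the injectivity half: the paper presents $S^{-1}$ as a two-sided inverse, whereas you propose to verify only surjectivity and deduce bijectivity from the bound $|\Tc_{\ell}|=\dim[A_{\ell}]_2\le \dim[\inwb(\mathcal{A}_{k,n})]_2=\dim[\mathcal{A}_{k,n}]_2=\#\{\text{semi-standard }k\times 2\text{ tableaux}\}$. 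That shortcut is sound and non-circular: it uses only Lemmas~\ref{lem:basis_span} and~\ref{lem:basis_indep}, the multiplicativity of initial forms (each $\inwb(P_I)$ being a monomial), the standard fact that a graded subalgebra and its initial algebra have the same Hilbert function, and the classical semi-standard monomial basis of the Pl\"ucker algebra -- facts the paper invokes elsewhere but not the SAGBI property being proved -- and it spares you checking $R(S(T))=T$ across the cases. What it does not spare you is the surjectivity bookkeeping that both you and the paper defer: showing the reconstructed columns are genuinely increasing, of the asserted type with the asserted parameter $r$ or $s$, and that $S(R(U))=U$; in the mixed types 3B, 3C, 3D this is where all the delicacy sits (for instance, the tail entries of the two reconstructed columns can interleave nontrivially, and for Type 3D the first row of the semi-standard target must carry $j_1$ before $i_1$, since $i_1\ge j_2>j_1$), so a complete write-up would still have to carry out exactly the case-by-case verification the paper sketches. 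In short: same construction as the paper, plus a legitimate dimension-count replacement for one half of the bijection argument.
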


\begin{proof}
We will show that the inverse to the map $S$ from Definition~\ref{def:basis} exists by constructing the map explicitly. Fix a semi-standard tableau
\[
T = 
\begin{tabular}{|c|c|}
    \hline
    $i_1$           & $j_1$       \\
    $i_2$           & $j_2$       \\
    $i_3$           & $j_3$       \\
    $\vdots$        & $\vdots$    \\
    $i_{k}$         & $j_{k}$     \\
    \hline
\end{tabular}\ .
\]
We now take cases on $i_1, i_2, j_1$ and $j_2$. If we have one of,
\begin{itemize}
    \item $i_1, i_2, j_1, j_2 \in B_{\ell,1}$,
    \item $i_1, i_2 \in B_{\ell,1}$ and $j_1, j_2  \in B_{\ell,2}$,
    \item $i_1, i_2, j_1, j_2 \in B_{\ell,2}$,
\end{itemize} 
then we have that $S^{-1}(T) = T$ is a tableau of type 1.

\medskip

If $i_1, j_1 \in B_{\ell,1}$ and $i_2, j_2 \in B_{\ell,2}$, then we have that $S^{-1}(T) = T'$ is a tableau of type 2, where
\[
T' = 
\begin{tabular}{|c|c|}
    \hline
    $i_2$           & $j_2$       \\
    $i_1$           & $j_1$       \\
    $i_3$           & $j_3$       \\
    $\vdots$        & $\vdots$    \\
    $i_{k}$         & $j_{k}$     \\
    \hline
\end{tabular}\ .
\]

\medskip

If $i_1 \in B_{\ell,1}$ and $j_1, i_2, j_2 \in B_{\ell,2}$, then there are two cases, either $i_2 \le j_1$ or $i_2 > j_1$.

\textbf{Case 1.} $i_2 \le j_1$. Then $S^{-1}(T) = T_1$ in Figure~\ref{tab:S_inv} is a tableau of type 3A.

\textbf{Case 2.} $i_2 > j_1$. Let $r = \min\{ t \ge 2: i_{t+1} > j_t \}$. Then $S^{-1}(T) = T_2$ in Figure~\ref{tab:S_inv} is a tableau of type 3B(r).

\medskip

If $i_1, j_1, i_2 \in B_{\ell,1}$ and $j_2 \in B_{\ell,2}$, then there are two cases, either $j_1 < i_2$ or $j_1 \ge i_3$.

\textbf{Case 1.} $j_1 < i_2$. Then $S^{-1}(T) = T_3$ in Figure~\ref{tab:S_inv} is a tableau of type 3C(s), where $s = |I \cap B_{\ell,1}|$.

\textbf{Case 2.} $j_1 \ge i_2$. Then $S^{-1}(T)=T_4$ in Figure~\ref{tab:S_inv} is a tableau of type 3D(s), where $s = |I \cap B_{\ell,1}|$.
\begin{figure} 
    \centering
    $T_1 = $
    \begin{tabular}{|c|c|}
        \hline
        $i_2$           & $j_1$       \\
        $i_1$           & $j_2$       \\
        $i_3$           & $j_3$       \\
        $\vdots$        & $\vdots$    \\
        $i_{k}$         & $j_{k}$     \\
        \hline
    \end{tabular}
    \quad
    $T_2 = $
    \begin{tabular}{|c|c|}
        \hline
        $j_2$           & $j_1$       \\
        $i_1$           & $i_2$       \\
        $j_3$           & $i_3$       \\
        $\vdots$        & $\vdots$    \\
        $j_{r}$         & $i_{r}$     \\
        $i_{r+1}$       & $j_{r+1}$   \\
        $\vdots$        & $\vdots$    \\
        $i_k$           & $j_k$       \\
        \hline
    \end{tabular}
    \quad
    $T_3 = $
    \begin{tabular}{|c|c|}
        \hline
        $j_2$           & $j_1$       \\
        $i_1$           & $i_2$       \\
        $j_3$           & $i_3$       \\
        $\vdots$        & $\vdots$    \\
        $j_{s}$         & $i_{s}$     \\
        $i_{s+1}$       & $j_{s+1}$   \\
        $\vdots$        & $\vdots$    \\
        $i_k$           & $j_k$       \\
        \hline
    \end{tabular}
    \quad
    $T_4 = $
    \begin{tabular}{|c|c|}
        \hline
        $j_2$           & $i_1$       \\
        $j_1$           & $i_2$       \\
        $j_3$           & $i_3$       \\
        $\vdots$        & $\vdots$    \\
        $j_{s}$         & $i_{s}$     \\
        $i_{s+1}$       & $j_{s+1}$   \\
        $\vdots$        & $\vdots$    \\
        $i_k$           & $j_k$       \\
        \hline
    \end{tabular}
    \caption{The list of tableaux for defining $S^{-1}$ in Lemma~\ref{lem:basis_bijection}.}
    \label{tab:S_inv}
\end{figure}
So for each $\ell \in \{0, 1, \dots, n-1 \}$ we have shown that $\Tc_{\ell}$ is a basis for $[A_{\ell}]_2$ and there is a bijection between $\Tc_{\ell}$ and semi-standard tableaux with two columns. 
\end{proof}

\section{Toric degenerations of Grassmannians}\label{sec:Grassmannian}
In this section, we state our main results for Grassmannians, where we generalize the results of \cite{KristinFatemeh} from $\Gr(3,n)$ to higher-dimensional Grassmannians.

\begin{theorem}\label{prop:quad}
The ideals of block diagonal matching fields 
are  quadratically generated. 
\end{theorem}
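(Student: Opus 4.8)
The plan is to show that every binomial in $J_{\BLambda} = \inwb(G_{k,n})$ lies in the ideal generated by the quadrics of $J_{\BLambda}$. Since $\inwb(G_{k,n})$ is a toric (hence binomial) ideal, it is generated by binomials $\mathbf{P}^{T} - \mathbf{P}^{T'}$ where $T$ and $T'$ are row-wise equal tableaux with the same number of columns; equivalently, by Definition~\ref{def:swap}, it suffices to prove that any two row-wise equal tableaux $T, T'$ with $t$ columns are \emph{quadratically equivalent}, i.e.\ connected by a sequence of swaps. This is a purely combinatorial statement about the matching field tableaux studied in \S\ref{sec:tableaux}, and the two lemmas flagged there, Lemmas~\ref{lem:swaps_case1} and \ref{lem:swaps_case2}, are exactly the tools built for it.

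First I would set up an induction on the number of columns $t$ (the case $t \le 1$ being trivial and $t = 2$ following from Lemmas~\ref{lem:basis_span} and \ref{lem:basis_indep}, which together say every two-column tableau is row-wise equal to a \emph{unique} element of $\Tc_\ell$). For the inductive step, order the columns of both $T$ and $T'$ by type (recall that row-wise equal tableaux have the same multiset of column types, as noted after Definition~\ref{def:column_type}), and consider the leftmost column type present. If that type is $i \in \{2, 3, \dots, k\}$, apply Lemma~\ref{lem:swaps_case1} to produce $S \sim T$ and $S' \sim T'$ sharing an identical leftmost column; delete this common column and invoke the induction hypothesis on the remaining $t-1$ columns. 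If instead all columns are of type $0$ or $1$, apply Lemma~\ref{lem:swaps_case2} to make the first two rows of (quadratic equivalents of) $T$ and $T'$ agree. The remaining task is then to reduce the "type-$0$/type-$1$ after fixing the top two rows" situation to a smaller instance: after the first two rows are pinned down, the columns of type $1$ are determined on their first entry (in $B_{\ell,1}$) and second entry, and each residual column-tail of length $k-2$ is itself a type-$0$ or type-$1$ column over the ground set $\{1,\dots,n\}$ restricted appropriately — so one recurses on the sub-tableau formed by rows $3, \dots, k$.

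The main obstacle I expect is precisely this last reduction in the all-type-$\{0,1\}$ case: Lemma~\ref{lem:swaps_case2} equalizes only the \emph{first two rows}, not an entire column, so unlike the Lemma~\ref{lem:swaps_case1} branch one cannot simply strike a column and recurse on fewer columns — one must recurse on fewer \emph{rows}, and one has to check that the induced tableaux on rows $3,\dots,k$ are again legitimate matching-field tableaux of the appropriate shape (the pairing of type-$1$ columns to their tails must be shown to be well-defined and compatible with semi-standardness), and that quadratic relations on the sub-tableau lift back to quadratic relations on the full tableau without disturbing the first two rows. Handling the interaction of the $B_{\ell,1}$/$B_{\ell,2}$ partition with this row-truncation — in particular that a truncated type-$1$ column still behaves correctly — is the delicate bookkeeping, but it is exactly the kind of argument the examples following Lemmas~\ref{lem:swaps_case1} and \ref{lem:swaps_case2} are designed to illustrate, so I would lean heavily on those and on the flowchart of Figure~\ref{flowchart:gr_span_pf} to organize the case analysis.
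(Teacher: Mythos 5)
Your overall skeleton matches the paper's: reduce to showing that row-wise equal tableaux are quadratically equivalent, induct on the number of columns, and use Lemma~\ref{lem:swaps_case1} when the leftmost column has type $i\in\{2,\dots,k\}$ and Lemma~\ref{lem:swaps_case2} when only types $0$ and $1$ occur. But the step you flag as ``the main obstacle'' is exactly where your argument stops and the paper's does not, and your proposed fix is not the paper's and is not justified. After Lemma~\ref{lem:swaps_case2} the paper does \emph{not} recurse on the rows $3,\dots,k$. Instead it equalizes the leftmost columns directly: with the first two rows identical, take the leftmost columns $A$ of $T_X$ and $B$ of $T'_X$, let $i\ge 3$ be the first row where they differ, say $a_i<b_i$; semi-standardness forces the entry $a_i$ of row $i$ of $T'$ to sit in a column $C$ of $T'_Y$, and one exchanges the tails of $B$ and $C$ from row $i$ downward (the relation of Figure~\ref{fig:tableau1}), the two inequalities $a_{i-1}<a_i=c_i<c_{i+1}$ and $c_{i-1}<c_i=a_i<b_i$ guaranteeing that both new columns are valid and the first two rows are untouched. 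This strictly reduces the number of differences in the leftmost column, so the leftmost columns become equal and one strikes them and inducts on columns. Your alternative --- straightening the truncated tableaux on rows $3,\dots,k$ and lifting --- is precisely what needs proof: a swap on the truncated tableaux need not lift to a swap of valid matching-field columns, because the incoming tail entry may be smaller than the retained first-row entry of a type-$1$ column (or the second-row entry of a type-$0$ column), and you give no argument that the straightening can always be routed through liftable swaps. So as written there is a genuine gap at the decisive step; the content of the paper's proof is exactly the explicit column-equalization that replaces your row recursion.

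Two smaller points. First, you open by asserting $J_{\BLambda}=\inwb(G_{k,n})$; that is Theorem~\ref{thm:SAGBI}, which is proved \emph{after} and \emph{using} Theorem~\ref{prop:quad}, so you cannot assume it here --- fortunately you only need that $J_{\BLambda}$ is the toric ideal of the monomial map $\phi_{\BLambda}$, so that its binomials correspond to pairs of row-wise equal tableaux. Second, the base case $t=2$ needs no appeal to Lemmas~\ref{lem:basis_span} and \ref{lem:basis_indep}: two row-wise equal tableaux with at most two columns differ by a single swap by Definition~\ref{def:swap}.
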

Before giving the proof of Theorem~\ref{prop:quad} we fix our notation. Fix a block diagonal matching field $\BLambda$. Let $\II = \{I_1, \dots, I_r\}$ with $I_i \in \mathbf{I}_{k, n}$ for each $1 \le i \le r$ be a non-empty collection of  $k$-subsets of $[n]$ and consider the tableau $T_{\II}$. Recall, from \S\ref{sec:tableaux_quad} on Page~\pageref{sec:tableaux_quad}, that $T_{\II}$ is written in the form $[T_X | T_Y]$ where $T_X$ and $T_Y$ are tableaux in semi-standard form. The columns of $T_Y$ are the columns of $T_{\II}$ of type $1$ and the remaining columns are contained in $T_X$. 

\begin{proof}
Suppose that $T$ and $T'$ are two row-wise equal tableaux. The statement is equivalent to proving that $T$ and $T'$ differ by a sequence of swaps. We proceed by induction on the number of columns in $T$ and $T'$. We will assume that $T$ and $T'$ contain no identical columns, otherwise we may remove these columns from the subsequent manipulations and apply induction. Note that if $T$ and $T'$ contain two or fewer columns then we are done.

Suppose that the leftmost column of $T$ and $T'$ is of type $i$ where $i \in \{2,3, \dots, k \}$. Then by Lemma~\ref{lem:swaps_case1} we may perform a sequence of swaps to reduce the number of different columns in $T$ and $T'$.

Suppose that $T$ and $T'$ contain only columns of type $0$ and $1$. Then by Lemma~\ref{lem:swaps_case2} we may perform a sequence of swaps to make the first two rows of $T$ and $T'$ identical. Let $A = (a_1,a_2, \dots, a_k)^{tr}$ be the leftmost column of $T_X$ of type $0$. Let $B = (b_1, b_2, \dots, b_k)^{tr}$ be the leftmost column in $T'_X$. By assumption we have that $b_1 = a_1$ and $b_2 = a_2$. Since $A \neq B$, let $i \ge 3$ be the smallest index such that $b_i \neq a_i$. Without loss of generality suppose that $a_i < b_i$. Since $T_X$ and $T'_X$ are in semi-standard form from the third row down, there is a column $C = (c_1, \dots, c_k)^{tr}$ in $T'_Y$ such that $c_i = a_i$. Then we may apply the relation in Figure~\ref{fig:tableau1}.

\begin{figure}
\centering\begin{tabular}{|cc|}
    \multicolumn{2}{c}{$B \qquad C$} \\
    \hline
    $a_1$       & $c_1$     \\
    $\vdots$    & $\vdots$  \\
    $a_{i-1}$   & $c_{i-1}$ \\
    $b_i$       & $a_i$     \\
    $b_{i+1}$   & $c_{i+1}$ \\
    $\vdots$    & $\vdots$  \\
    $b_k$       & $c_k$     \\
    \hline
\end{tabular}
=
\begin{tabular}{|cc|}
    \multicolumn{2}{c}{$B' \qquad C'$} \\
    \hline
    $a_1$       & $c_1$     \\
    $\vdots$    & $\vdots$  \\
    $a_{i-1}$   & $c_{i-1}$ \\
    $a_i$       & $b_i$     \\
    $c_{i+1}$   & $b_{i+1}$ \\
    $\vdots$    & $\vdots$  \\
    $c_k$       & $b_k$     \\
    \hline
\end{tabular}\caption{A quadratic relation}
    \label{fig:tableau1}
\end{figure}

So we have reduced the number of differences in the leftmost column of $T$ and $T'$. So, by induction on the number of differences in the left column, we can find a sequence of swaps which makes the leftmost column of $T$ and $T'$ equal, which completes the proof. \qed

\end{proof}

\begin{example}\label{example:quad_gen}
Following the proof above, if the leftmost column of $T$ and $T'$ is of type $i$ where $i \in \{2, 3, \dots, k\}$ then consider Example~\ref{example:swap1} in \S\ref{sec:tableaux_quad}. This example gives a typical manipulation of the tableaux to reduce the number of differences in the leftmost column.

If the leftmost column of $T$ and $T'$ is of type $0$ or $1$, then Example~\ref{example:swap2} shows how to make the first and second rows of $T$ and $T'$ identical. Continuing the final part of Example~\ref{example:swap2}, we consider the tableaux $T$ and $T'$ below.
\[
T = 
\begin{tabular}{|ccc|c|}
    \multicolumn{1}{c}{$B$} & \multicolumn{1}{c}{} & \multicolumn{1}{c}{} & \multicolumn{1}{c}{$C$} \\
    \hline
    2   & 4 & 5 & 3 \\
    3   & 5 & 6 & 1 \\
    {\bf\textcolor{blue}{6}}   & 6 & 7 & {\bf\textcolor{blue}{4}} \\
    {\bf\textcolor{blue}{7}}   & 8 & 8 & {\bf\textcolor{blue}{5}} \\
    \hline
\end{tabular} \, ,
\quad T' =
\begin{tabular}{|ccc|c|}
    \multicolumn{1}{c}{$A$} & \multicolumn{1}{c}{} & \multicolumn{1}{c}{} & \multicolumn{1}{c}{} \\
    \hline
    2   & 4 & 5 & 3 \\
    3   & 5 & 6 & 1 \\
    4   & 6 & 7 & 6 \\
    5   & {\bf\textcolor{blue}{7}} & 8 & {\bf\textcolor{blue}{8}} \\
    \hline
\end{tabular}\, ,
\quad \tilde{T} =
\begin{tabular}{|ccc|c|}
    \multicolumn{1}{c}{$B'$} & \multicolumn{1}{c}{} & \multicolumn{1}{c}{} & \multicolumn{1}{c}{$C'$} \\
    \hline
    2   & 4 & 5 & 3 \\
    3   & 5 & 6 & 1 \\
    4   & 6 & 7 & 6 \\
    5   & 8 & 8 & 7 \\
    \hline
\end{tabular}\ .
\]
Following the notation of the proof of Theorem~\ref{prop:quad}, we label the columns with $A, B$ and $C$. We obtain the tableau $\tilde{T}$ from $T$ by swapping the entries $6,7$ in column $B$ with $4,5$ in column $C$. As a result the leftmost column of $\tilde{T}$ and $T'$ are identical.
\end{example}

We now turn our attention to one of the main results of our paper.

\begin{theorem}\label{thm:SAGBI}
The Pl\"ucker variables $P_I$ form a SAGBI basis for the Pl\"ucker algebra with respect to the weight vectors arising from block diagonal matching fields.
\end{theorem}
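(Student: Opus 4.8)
The plan is to apply the standard subalgebra analogue of Buchberger's criterion for SAGBI bases (see \cite{robbiano1990subalgebra}): the Pl\"ucker forms $\{P_I\}_{I \in \mathbf{I}_{k,n}}$ form a SAGBI basis for $\mathcal{A}_{k,n}$ with respect to $\wb_\ell$ if and only if every binomial in a generating set of the kernel of the toric map $\phi_{\BLambda}$ can be ``lifted'' to a subduction relation in $\mathcal{A}_{k,n}$; equivalently, it suffices to show that the initial algebra $\inwb(\mathcal{A}_{k,n})$ coincides with $A_\ell = \mathbb{K}[\inwb(P_I)]$. Since $\ini_{\wb_\ell}(P_I) = \phi_{\BLambda}(P_I)$ by coherence of $\BLambda$, we always have $A_\ell \subseteq \inwb(\mathcal{A}_{k,n})$, so the content is the reverse inclusion, and I would prove this by a Hilbert function (dimension counting) argument combined with the quadratic generation already established in Theorem~\ref{prop:quad}.

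First I would record that, by Theorem~\ref{prop:quad}, the matching field ideal $J_{\BLambda} = \Ker(\phi_{\BLambda})$ is generated in degree two, and that $J_{\BLambda} \subseteq \ini_{\wb_\ell}(G_{k,n})$ since each generating quadratic binomial $\mathbf{P}^T - \mathbf{P}^{T'}$ of $J_{\BLambda}$, coming from a swap of row-wise equal $k\times 2$ tableaux, is an initial form of an element of $G_{k,n}$ (the two leading monomials have equal $\wb_\ell$-weight by row-wise equality, and a genuine Pl\"ucker relation among the four involved variables realises this binomial in the initial ideal). Conversely, $\ini_{\wb_\ell}(G_{k,n})$ is contained in the toric ideal $J_{\BLambda}$ because $\phi_{\BLambda}$ factors through $R/\ini_{\wb_\ell}(G_{k,n})$; hence $\ini_{\wb_\ell}(G_{k,n}) = J_{\BLambda}$. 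The real work is then to show $\inwb(\mathcal{A}_{k,n}) = A_\ell$, for which I would compare graded pieces: $\dim_{\mathbb{K}} [\mathcal{A}_{k,n}]_d = \dim_{\mathbb{K}} [\inwb(\mathcal{A}_{k,n})]_d \geq \dim_{\mathbb{K}} [A_\ell]_d = \dim_{\mathbb{K}} [R/J_{\BLambda}]_d$, and it suffices to prove the reverse inequality $\dim [\mathcal{A}_{k,n}]_d \leq \dim [R/J_{\BLambda}]_d$, i.e.\ that the monomials in the $P_I$ surviving modulo $J_{\BLambda}$ are at most as many as the dimension of the Pl\"ucker algebra in each degree.

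The key step is the degree-two case, which is exactly what Lemmas~\ref{lem:basis_span}, \ref{lem:basis_indep} and \ref{lem:basis_bijection} deliver: the set $\Tc_\ell$ is a basis of $[A_\ell]_2$, and $T \mapsto S(T)$ is a bijection from $\Tc_\ell$ onto the set of semi-standard $k \times 2$ tableaux, which is precisely a basis of $[\mathcal{A}_{k,n}]_2$ (the classical Gelfand--Cetlin/straightening-law basis, cf.\ \cite[Theorem 14.11]{MS05}). Hence $\dim [A_\ell]_2 = \dim [\mathcal{A}_{k,n}]_2$. To bootstrap from degree two to all degrees, I would invoke the fact that $\mathcal{A}_{k,n}$ is the homogeneous coordinate ring of $\Gr(k,n)$ in its Pl\"ucker embedding, which is projectively normal and Koszul, so its defining ideal $G_{k,n}$ is generated in degree two; combined with Theorem~\ref{prop:quad} ($J_{\BLambda}$ generated in degree two) and the equality $\ini_{\wb_\ell}(G_{k,n}) = J_{\BLambda}$ together with the standard fact that the initial ideal has the same Hilbert function as the ideal, we get $\dim[R/J_{\BLambda}]_d = \dim [R/\ini_{\wb_\ell}(G_{k,n})]_d = \dim[R/G_{k,n}]_d = \dim [\mathcal{A}_{k,n}]_d$ for all $d$. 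This forces $\inwb(\mathcal{A}_{k,n}) = A_\ell$, completing the SAGBI criterion. As an immediate consequence, $A_\ell \cong R/J_{\BLambda}$ is the toric initial algebra, so the Rees-algebra construction of \cite{KM16} yields a flat toric degeneration of $\Gr(k,n)$.

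The main obstacle I anticipate is making the passage from degree two to higher degrees fully rigorous without circularity: the cleanest route is to use that $\ini_{\wb_\ell}$ of an ideal preserves the Hilbert function, so once we know $\ini_{\wb_\ell}(G_{k,n}) = J_{\BLambda}$ as ideals, the Hilbert functions of $\mathcal{A}_{k,n}$ and of $R/J_{\BLambda}$ agree automatically in every degree, and the only genuinely new input needed beyond Theorem~\ref{prop:quad} is the quadratic-generation of $G_{k,n}$ itself (classical) to guarantee that $J_{\BLambda} \subseteq \ini_{\wb_\ell}(G_{k,n})$ is an equality rather than a strict containment. Thus the subtle point to get right is the two-sided inclusion $\ini_{\wb_\ell}(G_{k,n}) = J_{\BLambda}$: the inclusion $\ini_{\wb_\ell}(G_{k,n}) \supseteq J_{\BLambda}$ uses Theorem~\ref{prop:quad} plus the observation above about quadratic binomials being initial forms, while $\ini_{\wb_\ell}(G_{k,n}) \subseteq J_{\BLambda}$ uses that $A_\ell$ is a domain of the expected Krull dimension $\dim \Gr(k,n) + 1$, so $\ini_{\wb_\ell}(G_{k,n})$, being a prime-containing-a-toric-ideal of the same dimension, must equal it. Once this identification is in hand the SAGBI property is purely a Hilbert-function tautology.
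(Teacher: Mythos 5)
Your proposal hinges entirely on the inclusion $J_{\BLambda} \subseteq \inwb(G_{k,n})$ (the reverse inclusion is indeed the standard fact, \cite[Lemma 11.3]{sturmfels1996grobner}, which you state correctly via the factoring argument), and this inclusion is the whole content of the theorem; but you dispose of it with the parenthetical assertion that each swap binomial $P_IP_J - P_{I'}P_{J'}$ generating $J_{\BLambda}$ is the initial form of ``a genuine Pl\"ucker relation among the four involved variables.'' That claim is unproven and, as phrased, not even well-posed: for a general incomparable pair the Pl\"ucker/straightening relation containing the monomial $P_IP_J$ has many terms rather than four variables, and even where a three-term relation is available one must verify that every remaining term has strictly larger $\wb_\ell$-weight --- this verification is precisely where the block-diagonal combinatorics must enter, and it is the hard part of the theorem, not an observation. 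Your bootstrap to all degrees then explicitly invokes ``the equality $\inwb(G_{k,n}) = J_{\BLambda}$,'' so the Hilbert-function portion of your argument is contingent on the very statement you have not established and cannot repair the gap. (Your closing justification of the easy inclusion, that $\inwb(G_{k,n})$ is ``a prime containing a toric ideal of the same dimension,'' is also off: that ideal is not known to be prime at that stage and the containment runs the other way; the Lemma 11.3 argument you gave earlier is the correct one.)

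For comparison, the paper obtains $J_{\BLambda} = \inwb(G_{k,n})$ without exhibiting any explicit lifts: Lemmas~\ref{lem:g_kn_gen_set_size} and \ref{lem:J_bl_gen_set_size} show that $G_{k,n}$ and $J_{\BLambda}$ have minimal generating sets of the same size $s$ (the count for $J_{\BLambda}$ resting on the same Lemmas~\ref{lem:basis_span}, \ref{lem:basis_indep}, \ref{lem:basis_bijection} you cite), Lemma~\ref{lem:min_gen_set_alg} then produces $s$ quadratic initial forms of elements of $G_{k,n}$ minimally generating an ideal $H \subseteq \inwb(G_{k,n})$, and a degree-two vector-space dimension count gives $(H)_2 = (J_{\BLambda})_2$; quadratic generation (Theorem~\ref{prop:quad}) forces $J_{\BLambda} = H \subseteq \inwb(G_{k,n}) \subseteq J_{\BLambda}$. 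Your ingredients would in fact suffice if you ran the dimension count in degree two \emph{before}, not after, asserting the ideal equality: $\dim(\inwb(G_{k,n}))_2 = \dim(G_{k,n})_2$ because passing to the initial ideal of a homogeneous ideal preserves the Hilbert function, $\dim(J_{\BLambda})_2 = \dim(G_{k,n})_2$ by the bijection of $\Tc_\ell$ with semi-standard two-column tableaux, and together with $(\inwb(G_{k,n}))_2 \subseteq (J_{\BLambda})_2$ this yields equality in degree two, after which Theorem~\ref{prop:quad} finishes the proof. As written, however, the proposal assumes the crux rather than proving it.
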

Before giving the proof of Theorem~\ref{thm:SAGBI} we recall from \S\ref{sec:core} that
$A_\ell=\mathbb{K}[\inwb(P_I)]_{I \in \mathbf{I}_{k,n}}$. We show that the subspace $[A_{\ell}]_2$ of $A_{\ell}$ spanned by the monomials of degree two has a basis 
which is in bijection with the set of semi-standard tableaux with two columns. 
We fix our notation so that $s$ denotes the size of a minimal generating set for the ideal $J_D = {\rm in}_{w_D}(G_{k,n})$ where $w_D$ is the weight induced by the diagonal matching field.

\begin{proof}[Proof of Theorem~\ref{thm:SAGBI}]
To prove the result we show that $J_{\BLambda} = {\rm in}_{w_\ell}(G_{k,n})$.

Let us begin by showing that $\dim([A_{\ell}]_2) = \dim([A_{0}]_2)$. By a classical result, $[A_{0}]_2$ has a basis given by the set of semi-standard tableaux with two columns and $k$ rows.
By Lemma~\ref{lem:basis_span} and \ref{lem:basis_indep} we have that $[A_{\ell}]_2$ has a basis given by $\Tc_{\ell}$ from Definition~\ref{def:basis}. And by Lemma~\ref{lem:basis_bijection} we have that $T \mapsto S(T)$ is a bijection, as desired.

Note that by Lemmas~\ref{lem:g_kn_gen_set_size} and \ref{lem:J_bl_gen_set_size} we have that $J_{\BLambda}$ and $G_{k,n}$ both have minimal generating sets of size $s$.

First we recall that by \cite[Lemma~11.3]{sturmfels1996grobner} we have that ${\rm in}_{w_\ell}(G_{k,n}) \subseteq J_{\BLambda}$. 
Let $f_1, \dots, f_s$ be a collection of homogenous quadratic polynomials which minimally generate $G_{k,n}$. By Lemma~\ref{lem:min_gen_set_alg} there exists a set of homogenous quadratic polynomials $\{g_1, \dots, g_s\} \subset G_{k,n}$ such that $\{\init_{w_\ell}(g_1), \dots, \init_{w_\ell}(g_s)\}$ is a minimal generating set for the ideal $H = \langle \init_{w_\ell}(g_1), \dots, \init_{w_\ell}(g_s) \rangle $. We have the chain of inclusions $H \subseteq \init_{w_\ell}(G_{k,n}) \subseteq J_{\BLambda}$. Since $H$ and $J_{\BLambda}$ are both generated by homogeneous polynomials of degree two, let us consider 
\begin{multline*}
    (H)_2 = \{f \in H : f \textrm{ is homogenous of degree two}\} \\
    \subseteq (J_{\BLambda})_2 = \{f \in J_{\BLambda} : f \textrm{ is homogenous of degree two}\}.
\end{multline*}
We have that $(H)_2$ and $(J_{\BLambda})_2$ are $\mathbb{K}$-vector spaces. The minimal generating sets of these ideals consisting of homogenous polynomials are bases for these vector spaces. It follows that both spaces have dimension $s$, hence they are equal.
Since $H$ and $J_{\BLambda}$ are generated by homogeneous polynomials of degree two, we have shown that $J_{\BLambda} = H \subseteq {\rm in}_{w_\ell}(G_{k,n}) \subseteq J_{\BLambda}$. 
\end{proof}
Here we include the three lemmas used to prove Theorem~\ref{thm:SAGBI}. These lemmas count the size of minimal generating sets of the various quadratically generated ideals with which we are working.
\begin{lemma}\label{lem:g_kn_gen_set_size}
The size of a minimal generating set for $G_{k,n}$ is equal to $s$.
\end{lemma}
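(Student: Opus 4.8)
The plan is to reduce the statement to a Hilbert function computation in degree two. The key observation is that both $G_{k,n}$ and $J_D={\rm in}_{w_D}(G_{k,n})$ are homogeneous ideals of the polynomial ring $R$ that are generated in degree two and that have zero parts in degrees $0$ and $1$. For any such ideal $I$, graded Nakayama shows that a minimal generating set of $I$ consists of degree-two elements whose number equals $\dim_{\mathbb{K}}I_2$: there is nothing to generate in degrees $0,1$, nothing is forced in degree $\ge 3$ since $I$ is generated in degree two, and in degree two the minimal generators span a complement of $(\mathfrak{m}I)_2=0$ inside $I_2$, where $\mathfrak{m}=R_{+}$.

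For $G_{k,n}$ these hypotheses hold: it is homogeneous; it is generated by the quadratic Plücker relations, which is classical; and it has no nonzero element of degree $\le 1$, since the maximal minors to which the $P_I$ are sent are linearly independent (the Plücker embedding is linearly non-degenerate). Hence, by the paragraph above, the size of a minimal generating set of $G_{k,n}$ equals $\dim_{\mathbb{K}}(G_{k,n})_2$.

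For $J_D$ I would invoke the classical description of the Gelfand--Cetlin degeneration: the weight $w_D$ is the one induced by the diagonal matching field, and there the Plücker forms already form a SAGBI basis, so that $J_D$ coincides with the diagonal matching field ideal and, in particular, is generated in degree two with zero parts in degrees $0$ and $1$ (see \cite[Theorem 14.11]{MS05}; the quadratic generation is also the $\ell\in\{0,n\}$ instance of Theorem~\ref{prop:quad}, which is proved independently of the present lemma). Therefore $s$, the size of a minimal generating set of $J_D$, equals $\dim_{\mathbb{K}}(J_D)_2$. Since an initial ideal with respect to a weight vector has the same Hilbert function as the original ideal, $\dim_{\mathbb{K}}(J_D)_2=\dim_{\mathbb{K}}(G_{k,n})_2$, and chaining the equalities
\[
\#\{\text{minimal generators of }G_{k,n}\}=\dim_{\mathbb{K}}(G_{k,n})_2=\dim_{\mathbb{K}}(J_D)_2=s
\]
gives the claim.

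I do not anticipate a genuine obstacle. Beyond elementary commutative algebra the only inputs are that $G_{k,n}$ is cut out by the quadratic Plücker relations and that the diagonal matching field realizes the classical (quadratically generated) Gelfand--Cetlin degeneration. The one step to carry out with care is the reduction ``number of minimal generators $=\dim_{\mathbb{K}}(\,\cdot\,)_2$'', which uses precisely that each of the two ideals is generated in a single degree and has no elements in degrees $0$ or $1$, so that no generator in one degree can contribute toward generating in another.
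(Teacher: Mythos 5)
Your argument is correct, but it takes a different route from the paper's. The paper proves Lemma~\ref{lem:g_kn_gen_set_size} by explicitly exhibiting minimal generating sets on both sides: for $J_D$ the quadratic binomials $P_IP_J-P_{I\wedge J}P_{I\vee J}$ indexed by incomparable pairs (\cite[Theorem 14.16]{MS05}), and for $G_{k,n}$ the straightening-type Pl\"ucker relations $P_IP_J+\sum P_{I'}P_{J'}$ with incomparable leading term (\cite[Theorem 14.6, Corollary 14.9]{MS05}); minimality is checked in both cases by the observation that each incomparable product $P_IP_J$ occurs in exactly one generator, so $s$ is identified concretely as the number of incomparable pairs. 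You instead reduce everything to degree-two Hilbert function data: graded Nakayama (both ideals are homogeneous, generated in degree two, with vanishing parts in degrees $0$ and $1$) gives that the number of minimal generators equals $\dim_{\mathbb K}(\,\cdot\,)_2$, and invariance of the Hilbert function under passing to the initial ideal ${\rm in}_{w_D}(G_{k,n})$ closes the chain. This is cleaner and avoids the explicit minimality bookkeeping, at the cost of being less explicit: the paper's computation also pins down $s$ as the number of incomparable pairs and produces concrete minimal generating sets, facts that are reused later (e.g.\ in the proof of Lemma~\ref{lem:J_bl_gen_set_size}, where $s$ is matched against comparable pairs/semi-standard tableaux). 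Two small points of care in your write-up: your Nakayama step is valid for homogeneous (equivalently, minimal-cardinality) generating sets, which is the sense in which the paper uses the term; and your parenthetical appeal to Theorem~\ref{prop:quad} for quadratic generation of $J_D$ is not quite on the nose, since that theorem concerns the matching field ideal $J_{\BLambda}$ rather than ${\rm in}_{w_D}(G_{k,n})$, so the classical diagonal SAGBI identification from \cite{MS05} (which you do cite) is still what carries that step.
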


\begin{proof}

Recall that $J_D$ denotes the ideal of the diagonal matching field. First we recall the component-wise partial order on subsets given by $\{a_1 < \dots < a_k \} \le \{b_1 < \dots < b_k \}$ if and only if $a_i \le b_i $ for all $1 \le i \le k$. From \cite[Theorem 14.16]{MS05} we recall that $J_D$ is generated by binomials
\[
P_I P_J - P_{I \wedge J} P_{I\vee J}
\]
where $I$ and $J$ are incomparable with respect to the component-wise partial order. Observe that every monomial $P_I P_J$ for which $I$ and $J$ are incomparable appears as a term of a unique generator. So any generator $P_I P_J - P_{I \wedge J} P_{I \vee J}$ cannot be written as a linear combination of the other generators since none of the other generators contain $P_I P_J$. Hence this generating set is minimal. We now proceed to show that $s$ is the size of a minimal generating set for $G_{k,n}$.

By \cite[Theorem 14.6]{MS05} we have that $G_{k,n}$ has a Gr\"obner basis and hence a generating set of size $s$. By \cite[Corollary 14.9]{MS05} we have that that the so-called semi-standard monomials form a basis for the Pl\"ucker algebra. The semi-standard monomials are those of the form $P_{I_1} P_{I_2} \dots P_{I_r}$ where $I_1 \le I_2 \le \dots \le I_r$. So we can take the aforementioned generating set for $G_{k,n}$ to be Pl\"ucker relations of the form
\[
P_I P_J + \sum_{(I', J')} P_{I'} P_{J'}
\]
where $I$ and $J$ are incomparable and for each $(I', J')$ appearing in the sum we have $I' \le J'$. Note that all incomparable pairs $P_I P_J$ appear as the leading term of a unique generator and the generators are of degree two. So by a similar argument as above it follows that these polynomials form a minimal generating set for $G_{k,n}$. So a minimal generating set for $G_{k,n}$ has size equal to the number of incomparable pairs $I$ and $J$ which is equal to $s$.
\end{proof}

\begin{lemma}\label{lem:J_bl_gen_set_size}
The size of a minimal generating set for $J_{\BLambda}$ is $s$. In particular $G_\ell = \{P_I P_J - P_{I'} P_{J'} \in J_{\BLambda} : (I,J) \in \Tc_\ell, \ (I', J') \not\in \Tc_\ell \}$ is a minimal generating set for $J_{\BLambda}$.
\end{lemma}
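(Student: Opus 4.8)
The plan is to establish the two claims together: that $G_\ell$ is a generating set for $J_{\BLambda}$, that it is minimal, and that $|G_\ell| = s$, where $s$ is the size of a minimal generating set for the diagonal ideal $J_D = \init_{w_D}(G_{k,n})$. The key structural input is that $J_{\BLambda} = \Ker(\phi_{\BLambda})$ is a toric (hence binomial) ideal, so it is generated by binomials $\Pb^T - \Pb^{T'}$ where $T$ and $T'$ run over row-wise equal tableaux; by Theorem~\ref{prop:quad} we may take these binomials to be quadratic, i.e. of the form $P_I P_J - P_{I'} P_{J'}$ with $T_{IJ}$ and $T_{I'J'}$ row-wise equal two-column tableaux. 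Moreover, by Lemmas~\ref{lem:basis_span} and \ref{lem:basis_indep}, $\Tc_\ell$ is a transversal: every two-column tableau is row-wise equal to \emph{exactly one} tableau in $\Tc_\ell$. So the quadratic binomials in $J_{\BLambda}$ are precisely, up to sign and up to adding/removing the zero binomial, the elements $P_I P_J - P_{I'} P_{J'}$ where $(I,J)$ is an \emph{arbitrary} pair of $k$-subsets with $(I,J) \notin \Tc_\ell$ (after ordering columns so that $\BLambda$-ordering is respected), and $(I',J') \in \Tc_\ell$ is the unique representative of its row-wise-equivalence class.

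The steps I would carry out are as follows. First, I would record that $J_{\BLambda}$ is spanned, as a vector space in degree $2$, by the binomials $\{P_I P_J - P_{I'}P_{J'}\}$ over all monomials $P_I P_J$ not corresponding to a tableau in $\Tc_\ell$, with $P_{I'}P_{J'}$ the $\Tc_\ell$-representative; this is immediate from the fact that $[A_\ell]_2$ has basis $\{S(T) : T \in \Tc_\ell\}$ (equivalently, the images $\phi_{\BLambda}(\Pb^T)$ for $T \in \Tc_\ell$ are linearly independent and every two-column tableau reduces to one of them), so $(J_{\BLambda})_2$ is exactly the kernel of the evaluation map on degree-$2$ monomials, which has this binomial description and dimension equal to (number of two-column tableaux) minus $|\Tc_\ell|$. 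Second, since $J_{\BLambda}$ is quadratically generated (Theorem~\ref{prop:quad}), these degree-$2$ binomials generate the whole ideal, and a minimal such generating set is a basis of the vector space $(J_{\BLambda})_2$; so $G_\ell$, which picks out exactly one binomial $P_I P_J - P_{I'}P_{J'}$ per monomial $P_I P_J \notin \Tc_\ell$, is precisely such a basis once one checks these binomials are linearly independent --- and they are, because each monomial $P_I P_J \notin \Tc_\ell$ appears in exactly one of them (as a ``leading'' term), a standard triangularity argument identical to the one in the proof of Lemma~\ref{lem:g_kn_gen_set_size}. Hence $G_\ell$ is a minimal generating set and $|G_\ell| = \dim (J_{\BLambda})_2$.

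Third, to get $|G_\ell| = s$, I would count: $\dim (J_{\BLambda})_2 = \binom{\binom{n}{k}+1}{2} - \dim[A_\ell]_2 = \binom{\binom{n}{k}+1}{2} - \dim[A_0]_2$, where the last equality is the bijection $\Tc_\ell \leftrightarrow \{\text{semi-standard two-column tableaux}\}$ of Lemma~\ref{lem:basis_bijection} (the case $\ell = 0$ being the diagonal one, where $\Tc_0$ \emph{is} the set of semi-standard two-column tableaux). The same computation for $J_D$ gives $\dim (J_D)_2 = \binom{\binom{n}{k}+1}{2} - \dim[A_0]_2$, and by Lemma~\ref{lem:g_kn_gen_set_size} this common value is $s$. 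Thus $|G_\ell| = \dim(J_{\BLambda})_2 = \dim(J_D)_2 = s$. The main obstacle, I expect, is purely bookkeeping: making sure the ``one monomial per binomial'' triangularity is stated cleanly (each $P_I P_J$ with $(I,J)\notin\Tc_\ell$ appears in exactly one $g \in G_\ell$ and in no other, so no nontrivial linear combination of the $g$'s vanishes) and being careful that the column-ordering conventions coming from $\BLambda$ do not introduce double-counting when we identify a monomial $P_I P_J$ with a two-column tableau. Everything else is a direct consequence of the transversality of $\Tc_\ell$ (Lemmas~\ref{lem:basis_span}, \ref{lem:basis_indep}), the bijection of Lemma~\ref{lem:basis_bijection}, quadratic generation (Theorem~\ref{prop:quad}), and the count of Lemma~\ref{lem:g_kn_gen_set_size}.
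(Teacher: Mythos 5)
Your proposal is correct and follows essentially the same route as the paper: it rests on the same ingredients (quadratic generation from Theorem~\ref{prop:quad}, the transversality of $\Tc_\ell$ from Lemmas~\ref{lem:basis_span} and \ref{lem:basis_indep}, the one-monomial-per-binomial triangularity for minimality, and the bijection of Lemma~\ref{lem:basis_bijection} together with Lemma~\ref{lem:g_kn_gen_set_size} for the count $s$). The only cosmetic difference is that you obtain generation by identifying $G_\ell$ as a basis of the vector space $(J_{\BLambda})_2$ via a dimension count, whereas the paper rewrites an arbitrary quadratic binomial explicitly as a difference of at most two elements of $G_\ell$; both arguments are valid and interchangeable.
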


\begin{proof}
First we show $G_\ell$ generates the ideal. By Theorem~\ref{prop:quad}, $J_{\BLambda}$ is generated by quadratic polynomials so it suffices to check that any quadratic polynomial in $J_{\BLambda}$ is a linear combination of elements of $G_\ell$. Since  $J_{\BLambda}$ is a binomial ideal it suffices to check this for binomials. Consider a binomial $P_{I'} P_{J'} - P_{I''} P_{J''} \in  J_{\BLambda}$. Since $[A_\ell]_2$ has a basis given by $\Tc_\ell$ it follows that at most one of $(I',J')$ and $(I'',J'')$ lies in $\Tc_\ell$. If exactly one lies in $\Tc_\ell$ then the polynomial is an element of $G_\ell$ and we are done. Otherwise assume that neither $(I',J')$ nor $(I'',J'')$ lies in $\Tc_\ell$. Since $\Tc_\ell$ is a basis for $[A_\ell]_2$ it follows that $P_{I'} P_{J'}$ and $P_{I''} P_{J''}$ are equal to a unique monomial $P_I P_J$ with $(I,J) \in \Tc_\ell$. Hence 
\[
P_{I'} P_{J'} - P_{I''} P_{J''} = -(P_{I} P_{J} - P_{I'} P_{J'}) + (P_{I} P_{J} - P_{I''} P_{J''}) \in \langle G_\ell \rangle. 
\]

Next we show that $G_\ell$ is minimal. First observe that every monomial of degree two $P_{I'} P_{J'}$ for which $(I,J) \not \in \Tc_\ell$ is a term of a unique element of $G_\ell$. So any generator $P_I P_J - P_{I'} P_{J'} \in G_\ell$ cannot be written as a linear combination of the other generators because none of the other generators contain $P_{I'} P_{J'}$ as a term. So we have shown $G_\ell$ is minimal.

To deduce that $G_\ell$ has size $s$ we begin by noting that the size of $G_\ell$ is precisely the number of $(I,J)$ which do not lie in $\Tc_\ell$. By the above we have that $\Tc_\ell$ is in bijection with semi-standard tableaux with two columns and $k$ rows. The set of semi-standard tableaux with two columns and $k$ rows is precisely the set of all comparable pairs $(I',J')$. Hence a minimal generating set of $J_{\BLambda}$ has size $s$.
\end{proof}

For the final lemma used to prove Theorem~\ref{thm:SAGBI}, we define the following convenient notation. We say that a collection of polynomials is a \emph{minimal generating set} if it is a minimal generating set for the ideal it generates.
\begin{lemma}\label{lem:min_gen_set_alg}
Let $\{f_1, \dots, f_s\} \subset \mathbb{K}[x_1,\ldots,x_n]$ be a collection of quadratic polynomials which is a
minimal generating set for the ideal $I$ and 
let $w \in \mathbb{Z}_{\geq 0}^n$ be a weight vector. Then there exists a set of quadratic polynomials $\{g_1, \dots, g_s\} \subseteq I$ such that $\{\init_w(g_1), \dots, \init_w(g_s)\}$ is a minimal generating set.
\end{lemma}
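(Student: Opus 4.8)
The plan is to produce the $g_i$ as a carefully chosen basis of the $\mathbb{K}$-vector space $W := \operatorname{span}_{\mathbb{K}}\{f_1,\dots,f_s\}\subseteq I$, arranged so that the (non-additive) operator $\init_w$ does not lose dimension. Since $\{f_1,\dots,f_s\}$ is a minimal generating set of $I$, no $f_i$ lies in the ideal generated by the others; in particular the $f_i$ are $\mathbb{K}$-linearly independent, so $\dim_{\mathbb{K}}W=s$, and every nonzero element of $W$ is again a homogeneous quadratic polynomial lying in $I$.

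First I would fix a total order $\prec$ on the finitely many degree-two monomials that refines the preorder by $w$-weight: set $m\prec m'$ whenever $w(m)<w(m')$, breaking ties arbitrarily to obtain a total order. For nonzero $g\in W$ write $\init_\prec(g)$ for the $\prec$-smallest monomial occurring in $g$; because $\prec$ refines $w$-weight, $\init_\prec(g)$ is one of the monomials of $\init_w(g)$ and occurs there with the same nonzero coefficient. Now apply Gaussian elimination: put the coefficient matrix of $f_1,\dots,f_s$, with columns indexed by the degree-two monomials ordered by $\prec$, into reduced row echelon form. This yields a basis $g_1,\dots,g_s$ of $W$ for which the pivot monomials $m_i:=\init_\prec(g_i)$ are pairwise distinct and each $m_i$ occurs with coefficient $1$ in $g_i$ and with coefficient $0$ in every $g_j$, $j\neq i$. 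Each $g_i$ is a homogeneous quadratic in $I$, so these are the required polynomials.

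It remains to verify that $\{\init_w(g_1),\dots,\init_w(g_s)\}$ is a minimal generating set of the ideal $H:=\langle \init_w(g_1),\dots,\init_w(g_s)\rangle$ it generates. As each $g_i$ is homogeneous of degree two, so is each $\init_w(g_i)$, whence $H$ is generated in degree two and $H_d=0$ for $d<2$. The $\init_w(g_i)$ are $\mathbb{K}$-linearly independent: if $\sum_i c_i\,\init_w(g_i)=0$ with $c_{i^*}\neq 0$, compare the coefficients of the monomial $m_{i^*}$ on both sides. Since $m_{i^*}$ occurs in $g_{i^*}$, hence in $\init_w(g_{i^*})$, with coefficient $1$, and does not occur in $g_j$, hence not in $\init_w(g_j)$, for $j\neq i^*$, the coefficient of $m_{i^*}$ on the left-hand side equals $c_{i^*}$, forcing $c_{i^*}=0$, a contradiction. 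Finally, if a proper subset, say $\{\init_w(g_i):i\neq j\}$, generated $H$, then $\init_w(g_j)\in\langle\init_w(g_i):i\neq j\rangle$; writing $\init_w(g_j)=\sum_{i\neq j}h_i\,\init_w(g_i)$ and taking degree-two parts (all the $\init_w(g_i)$ and $\init_w(g_j)$ are homogeneous of degree two, and there are no elements of $H$ in degrees $0$ or $1$) gives $\init_w(g_j)\in\operatorname{span}_{\mathbb{K}}\{\init_w(g_i):i\neq j\}$, contradicting independence. Hence no proper subset generates $H$, so $\{\init_w(g_1),\dots,\init_w(g_s)\}$ is a minimal generating set, as required. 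This is precisely the statement needed in the proof of Theorem~\ref{thm:SAGBI}.

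The crux, and the only real obstacle, is that $\init_w$ is not additive, so an arbitrary basis of $W$ might have initial forms spanning a space of dimension strictly less than $s$ (several basis elements could share the same $w$-initial form). The reduced-echelon construction circumvents this by attaching to each $g_i$ a distinguished monomial $m_i$ that survives in $\init_w(g_i)$ but appears in no other $\init_w(g_j)$; this ``marker'' forbids any linear relation among the $\init_w(g_i)$, which is exactly what drives both the independence and the minimality.
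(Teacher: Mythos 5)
Your proof is correct, and it takes a genuinely different route from the paper. The paper proceeds algorithmically (its Algorithm~1): starting from $f_1,\dots,f_s$ it scans the list, and whenever $\init_w(g_t)$ lies in the ideal of the previously accepted initial forms it rewrites $\init_w(g_t)$ as a weight-compatible $\mathbb{K}$-linear combination and subtracts it off, proving termination by descent on the weight and correctness by induction on $t$. You instead make a single linear-algebra move: echelonize the $s$-dimensional space $W=\operatorname{span}\{f_1,\dots,f_s\}$ against a total order on degree-two monomials refining the $w$-weight, so that each $g_i$ carries a pivot monomial that survives into $\init_w(g_i)$ and occurs in no other $g_j$; linear independence of the initial forms is then immediate, and (exactly as in the paper) minimality of the generating set is deduced from linear independence because everything is homogeneous of degree two. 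Your construction buys a shorter, non-iterative existence proof with no termination argument, and it additionally shows the $g_i$ may be taken inside $\operatorname{span}_{\mathbb K}\{f_1,\dots,f_s\}$; the paper's version is phrased as an explicit procedure modifying the given generators one at a time, which is what the authors chose to present. One small caveat, which applies equally to the paper's own proof: you assert that the elements of $W$ are \emph{homogeneous} quadrics, whereas the lemma as stated only says ``quadratic''; homogeneity is genuinely used in the step reducing minimality to linear independence (via taking degree-two parts), so strictly speaking both arguments prove the lemma under the homogeneity hypothesis, which is all that the application to $G_{k,n}$ in Theorem~\ref{thm:SAGBI} requires.
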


\begin{algorithm}[H]\label{alg:min_gen_set}
\SetAlgoLined
\SetNlSty{textit}{}{:}
\SetKwInOut{input}{Input}\SetKwInOut{output}{Output}

\input{A quadratic minimal generating set $\{f_1, \dots, f_s\}$ for $I$.}
\output{A set of quadratic polynomials $\{g_1, \dots, g_s\}\subseteq I$,  such that $\{\init_w(g_1), \dots, \init_w(g_s)\}$ is a minimal generating set.}
\BlankLine

$g_i := f_i$ for each $i \in [s]$\;
$t := \max\{u \in [s] : \{\init_w(f_1), \dots, \init_w(f_u)\}$ is a minimal generating set$\} + 1$\;
\BlankLine

\While{$t \le s$}{
\While{$\init_w(g_t) \in \langle \init_w(g_1), \dots, \init_w(g_{t-1}) \rangle$}{
Write $\init_w(g_t) = \sum_{i = 1}^{t-1} r_i \, \init_w(g_i)$ for some $r_i \in \mathbb{K}$ such that the weight of $g_t$ is equal to the weight of $g_i$ for each $i$ where $r_i \neq 0$\;
$g_t \leftarrow g_t - \sum_{i = 1}^{t-1} r_i \, \init_w(g_i)$\;
}
$t \leftarrow t + 1$\;
}
\caption{A quadratic minimal generating set in $\init_w(I)$}
\end{algorithm}

\begin{proof}
We prove Lemma~\ref{lem:min_gen_set_alg} by showing that Algorithm~\ref{alg:min_gen_set} is correct.
First we note that the set 
\[
\{u \in [n] : \init_w(f_1), \dots, \init_w(f_u) \textrm{  
is a minimal generating set}\}
\] 
is non-empty because $\langle \init_w(f_1) \rangle$ is principal, hence minimally generated by $\init_w(f_1)$. Therefore $t$, in line~2, is well-defined. Let us fix this value for $t$ throughout the rest of the proof. We proceed to show that the output of Algorithm~\ref{alg:min_gen_set} is correct by showing that for each $i \in \{t-1, \dots, s \}$ the set $\{ \init_w(g_1), \dots, \init_w(g_i)\}$ is a minimal generating set. We proceed by induction on $i$.

If $i = t-1$ then, by the definition of $t$, we have that $\init_w(f_1), \dots, \init_w(f_{t-1})$ is a minimal generating set. In line~1 we define $g_i = f_i$ for each $i \in [t-1]$. Note that, for each $i \in [t-1]$, the polynomial $g_i$ is not changed throughout the rest of the algorithm. So $\init_w(g_1), \dots, \init_w(g_{t-1})$ is a minimal generating set.

If $i \ge t$, let us assume, by induction, that $\init_w(g_1), \dots, \init(g_{i-1})$ is a minimal generating set. In order to show that the algorithm is correct, we begin by showing that the while-loop, starting on line~4, terminates. Suppose that $\init_w(g_i) \in \langle \init_w(g_1), \dots, \init_w(g_{i-1})\rangle$. 
Note that all polynomials $g_j$ for $j \in [s]$ are homogeneous quadratic polynomials. It follows that the set of polynomials $p \in \langle \init_w(g_1), \dots, \init_w(g_{i-1}) \rangle$ such that all terms of $p$ have the same weight as $\init_w(g_i)$ are linear combinations of $\init_w(g_j)$ where the weight of $g_j$ is the same as the weight of $g_i$ and $j \in [i-1]$. 
So we are able to find $r_i \in \mathbb{K}$, as in line~5, such that $\init_w(g_i) = \sum_{j = 1}^{t-1} r_j \, \init_w(g_j)$. In line~6 we update $g_i$, note that the weight of $g_i - \sum_{j = 1}^{t-1} r_j \, \init_w(g_j)$ is strictly less than the weight of $g_i$. Since the weight of a polynomial is a non-negative integer, it follows that the while loop from line~4 to 7 is executed finitely many times.

Once the while loop terminates we note that $\init_w(g_i) \not\in \langle \init_w(g_1), \dots, \init_w(g_{i-1})\rangle$. To show that $\{\init_w(g_1), \dots, \init_w(g_i)\}$ is a minimal generating set, it remains to show that 
\[\init_w(g_j) \not\in \langle \{\init_w(g_1), \dots, \init_w(g_i) \} \backslash \init_w(g_j) \rangle
\]
for all $j \in [i-1]$.
So suppose by contradiction that this fails for some $j$. Then we can write $\init_w(g_j) = \sum_{k \in [i] \backslash j} r_k \, \init_w(g_k)$ for some $r_k \in \mathbb{K}$ where $r_k = 0$ if the weight of $g_k$ is different to the weight of $g_j$. If $r_i = 0$ then we have shown that $\init_w(g_1), \dots, \init_w(g_{i-1})$ is not a minimal generating set, a contradiction, and so $r_i \neq 0$. We may rearrange this equation to obtain
\[
\init_w(g_i) = \sum_{k = 1}^{i-1} \frac{r_k}{r_i} \, \init_w(g_k), \textrm{ where we define } r_j \textrm{ to be } 1.
\]
This implies that $\init_w(g_i) \in \langle \init_w(g_1), \dots, \init_w(g_{i-1})\rangle $, a contradiction. And so we have shown that $\init_w(g_1), \dots, \init_w(g_i)$ is a minimal generating set.
\end{proof}

As a corollary of the above statements and \cite[Theorem 11.4]{sturmfels1996grobner} we have that:
\begin{corollary} 
Each block diagonal matching field produces a toric degeneration of $\Gr(k,n)$.
\end{corollary}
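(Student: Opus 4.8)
The plan is to feed Theorem~\ref{thm:SAGBI} into the standard degeneration argument for Khovanskii (SAGBI) bases, as in \cite[Theorem 11.4]{sturmfels1996grobner} (see also \cite{KM16,robbiano1990subalgebra}). Recall that a toric degeneration of $\Gr(k,n)$, in its Pl\"ucker embedding, means a flat family $\mathcal{X} \to \mathbb{A}^1 = \Spec \mathbb{K}[t]$ whose fiber over $0$ is an irreducible projective toric variety and whose fibers over $t \neq 0$ are all isomorphic to $\Gr(k,n)$. I will produce such a family from the weight vector $\wb_\ell$ attached to a block diagonal matching field $\BLambda$.

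First I would fix a block diagonal matching field $\BLambda$ and recall from Example~\ref{example:block} that the weight vector $\wb_\ell$ is induced by the \emph{integer} matrix $M_\ell$, so after a harmless shift we may assume $\wb_\ell$ has nonnegative integer entries; write $w_I$ for the $\wb_\ell$-weight of the Pl\"ucker variable $P_I$. By Theorem~\ref{thm:SAGBI} the Pl\"ucker forms $\{P_I\}_{I \in \mathbf{I}_{k,n}}$ form a SAGBI basis of the Pl\"ucker algebra $\mathcal{A}_{k,n}$ with respect to $\wb_\ell$, which by Definition~\ref{sagbi} means each $\inwb(P_I)$ is a monomial and $\inwb(\mathcal{A}_{k,n}) = A_\ell = \mathbb{K}[\inwb(P_I)]_{I \in \mathbf{I}_{k,n}}$. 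The deformation argument of \cite[Theorem 11.4]{sturmfels1996grobner} then produces the Rees-type $\mathbb{K}[t]$-subalgebra $\widetilde{\mathcal{A}} \subseteq \mathcal{A}_{k,n}[t]$ generated by the elements $t^{w_I} P_I$; the SAGBI property is exactly what guarantees that $\widetilde{\mathcal{A}}$ is a free, in particular flat, $\mathbb{K}[t]$-module with $\widetilde{\mathcal{A}}/(t) \cong A_\ell$ and $\widetilde{\mathcal{A}}/(t-c) \cong \mathcal{A}_{k,n}$ for every $c \neq 0$. Taking $\Proj$ over $\mathbb{K}[t]$ of $\widetilde{\mathcal{A}}$, with its natural grading, then yields the flat family $\mathcal{X} \to \mathbb{A}^1$ with generic fiber $\Gr(k,n)$.

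It remains to identify the special fiber $\Proj A_\ell$ as a toric variety. Since every $\inwb(P_I) = \sgn(\BLambda(I))\,\mathbf{x}_{\BLambda(I)}$ is a single monomial up to sign, $A_\ell$ is the semigroup algebra of the affine semigroup generated inside $\mathbb{Z}^{k \times n}$ by the exponent vectors of the $\mathbf{x}_{\BLambda(I)}$; equivalently $A_\ell \cong \mathbb{K}[P_I]/J_{\BLambda}$, where $J_{\BLambda} = \Ker(\phi_{\BLambda})$ of \eqref{eqn:monomialmap} is a prime binomial ideal. Hence $\Proj A_\ell$ is an irreducible projective toric variety, and $\mathcal{X} \to \mathbb{A}^1$ is the asserted toric degeneration. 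The step I expect to require the most care is the flatness of $\widetilde{\mathcal{A}}$ over $\mathbb{K}[t]$: this is exactly where Theorem~\ref{thm:SAGBI} is essential, since flatness is equivalent to the equality $\inwb(G_{k,n}) = J_{\BLambda}$ (so that $G_{k,n}$ and its initial ideal share the same Hilbert function), which is precisely what the proof of Theorem~\ref{thm:SAGBI} established. The remaining verifications — that $\wb_\ell$ is integral and can be taken nonnegative, and that $A_\ell$ is a domain — are immediate from Example~\ref{example:block} and from the fact that $J_{\BLambda}$ is the kernel of a ring homomorphism into the polynomial domain $\mathbb{K}[x_{ij}]$.
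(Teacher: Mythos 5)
Your proposal is correct and takes essentially the same route as the paper, whose entire proof is the citation of Theorem~\ref{thm:SAGBI} (together with the preceding lemmas) and \cite[Theorem 11.4]{sturmfels1996grobner}; you have simply unpacked the Rees-algebra construction behind that citation. The only nuance is that flatness of the Rees family over $\mathbb{K}[t]$ is automatic (it is torsion-free over a PID), and what Theorem~\ref{thm:SAGBI} really buys is the identification of the special fiber with the toric algebra $A_\ell \cong \mathbb{K}[P_I]/J_{\BLambda}$ via $\inwb(G_{k,n}) = J_{\BLambda}$ — but since you establish exactly that equality, nothing is missing.
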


\begin{remark}
We remark that this result is a generalization of Corollary~1.5 in \cite{KristinFatemeh} for $\Gr(3,n)$. For general Grassmannians, there are other families of combinatorial objects leading to toric degenerations such as Newton--Okounkov bodies \cite{An13,KM16}, plabic graphs \cite{BFFHL,KM16} and cluster algebra \cite{rietsch2017newton,Fatemeh4}.
All such degenerations can be realized as Gr\"obner degenerations, nevertheless, this is not true in general; See e.g.
\cite{kateri2015family}.
Moreover, our combinatorial description of SAGBI bases of Grassmannians leads to analogous results for flag varieties. Using the combinatorial tools of matching field tableaux we have provided a family of toric degenerations of flag varieties,  Schubert varieties and Richardson varieties in \cite{OllieFatemeh2,OllieFatemeh3,Ollie4}. 
\end{remark}

Following \cite{KristinFatemeh}, we define the matching field polytope as follows. Given a $k\times n$ matching field, the {\em matching field polytope} is the convex hull of the points in $\mathbb{R}^{k\times n}$ associated to the $k$-subsets of $[n]$. See \cite[Section~5]{KristinFatemeh} for more details. For a coherent matching field $\Lambda$, this polytope coincides with the polytope of the toric variety defined by the ideal of the matching field $J_\Lambda$. Hence, its combinatorial invariants carry a lot of information about the variety. For example,  the normalized lattice volume of such polytope is equal to the degree of its corresponding variety. Moreover, by comparing the polytopes of different toric varieties arising from our construction, we can check whether the varieties are non-isomorphic. 

\medskip

Up to isomorphism, there are seven polytopes associated to trop$(\Gr(3,6))$ and four of them can be obtained as polytopes of block diagonal matching fields. See \cite{herrmann2009draw,Akihiro} for further details. 

\smallskip

Here, we summarize our computational results on matching field polytopes.

\begin{remark}
Using {\tt polymake}~\cite{polymake:2017} we computed the f-vectors of the polytopes associated to toric ideals of block diagonal matching fields, see Table~\ref{tab:toric_polytopes}. In particular, Table~\ref{tab:toric_polytopes} shows that {\it almost all} toric ideals obtained by our construction are non-isomorphic. Note that for some values of $\ell$, the f-vector is the same but the polytopes are non-isomorphic. For instance, in the cases of $\Gr(3,6)$ and $\Gr(4,7)$ the polytopes associated to the block diagonal matching fields $\BLambda$, for $\ell = 1$ and $\ell = 2$ have the same f-vectors, however, by computing their face lattices, we can show they are non-isomorphic. For some values of $\ell$, the matching field ideal is trivially isomorphic to the diagonal case, so we list only the f-vectors for the cases $0\leq\ell\leq n-k+1$.  In  \cite{Akihiro} we
study these polytopes from a geometric point of view.
\end{remark}

\section{Toric degeneration of Schubert varieties inside Grassmannians}\label{sec:Schubert}
In this section, we apply our results from \S\ref{sec:Grassmannian} for Grassmannains to provide a family of toric degenerations for Schubert varieties. Our aim is to answer the following question which is a reformulation of the {\em Degeneration Problem} posed by Caldero in \cite{caldero2002toric}, in our setting. 

\begin{question}\label{question:grassmannian}
Characterize non-zero toric ideals of type $G_{k,n,\ell,w}$. 
\end{question}
We provide a complete answer to Question~\ref{question:grassmannian}. In particular, we give a complete characterization of toric ideals of type $G_{k,n,\ell,w}$ from Definition~\ref{def:ideals}. We will first distinguish such ideals which are non-zero in Proposition~\ref{lem:Zero_Gr} and then in Theorem~\ref{Intro:Grassmannian} we provide a list of combinatorial conditions which lead to toric ideals.

\begin{proposition}\label{lem:Zero_Gr}
The ideal $G_{k,n,\ell,w}$ is zero if and only if $w \in Z_{k,n}$, where
\[
Z_{k,n}=\{(1,2,\ldots,k-1,i):\ k\leq i\leq n\} \cup
\{(1,\ldots,\hat{i},\ldots,k,k+1):\ 1\leq i\leq k-1\}.
\]
\end{proposition}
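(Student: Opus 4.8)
The plan is to analyze when the ideal $G_{k,n,\ell,w}$, obtained by setting $\{P_I : I \in S_{w,k}\}$ to zero in $\init_{\wb_\ell}(G_{k,n})$, becomes the zero ideal. First I would unwind the definition: $G_{k,n,\ell,w}$ is obtained from $\init_{\wb_\ell}(G_{k,n})$ by eliminating the variables indexed by $S_{w,k}$ after adjoining them, so $G_{k,n,\ell,w} = 0$ if and only if the quotient ring $\mathbb{K}[P_I : I \in \mathbf{I}_{k,n}]/(\init_{\wb_\ell}(G_{k,n}) + \langle P_I : I \in S_{w,k}\rangle)$ is a polynomial ring on the surviving variables $\{P_I : I \notin S_{w,k}\}$. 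Since $\init_{\wb_\ell}(G_{k,n}) = J_{\BLambda}$ is quadratically generated (Theorem~\ref{prop:quad}) with the explicit minimal generating set $G_\ell = \{P_IP_J - P_{I'}P_{J'} : (I,J)\in\Tc_\ell,\ (I',J')\notin\Tc_\ell\}$ from Lemma~\ref{lem:J_bl_gen_set_size}, the ideal $G_{k,n,\ell,w}$ vanishes precisely when every such generator, after the substitution $P_I \mapsto 0$ for $I \in S_{w,k}$, becomes zero — equivalently, each binomial relation has at least one variable from $S_{w,k}$ appearing in each of its two monomials, OR both monomials survive but then they must be equal, which for distinct tableaux cannot happen. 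So the clean reformulation is: $G_{k,n,\ell,w}=0$ iff for every quadratic relation $P_IP_J - P_{I'}P_{J'}\in G_\ell$, at least one of $I,J$ and at least one of $I',J'$ lies in $S_{w,k}$.

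Next I would translate this into a statement about the complement $\mathbf{I}_{k,n}\setminus S_{w,k}$: the ideal is zero iff no two (not necessarily distinct) surviving Plücker variables $P_I, P_J$ with $I,J \notin S_{w,k}$ form the "bottom" monomial of a relation — i.e. iff the set of surviving variables spans a polynomial subalgebra, meaning the multiplication map on degree-two monomials in surviving variables is injective into $[A_\ell]_2$. Using Lemma~\ref{lem:basis_span}/\ref{lem:basis_indep}, $[A_\ell]_2$ has basis $\Tc_\ell$ in bijection with semistandard two-column tableaux; so $G_{k,n,\ell,w}=0$ iff all degree-two monomials $P_IP_J$ with $I,J\notin S_{w,k}$ already correspond to tableaux in $\Tc_\ell$ (are their own normal form). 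The combinatorial heart is then to identify exactly which Grassmannian permutations $w$ make $S_{w,k}$ so large that the surviving $I$'s form such a "closed" set. I would recall that $\mathbf{I}_{k,n}\setminus S_{w,k} = \{I : I \le (w_{\ell_1},\dots,w_{\ell_k})\}$ under the componentwise order, restricted to $|I|=k$; this is an order ideal with a unique maximal element $\mu(w)=(w_1<\dots<w_k)$ sorted. So the surviving variables are exactly $\{P_I : I \le \mu(w)\}$.

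Now I would argue directly: the set $\{I : I \le \mu\}$ is closed under forming relation-bottoms iff it is "small" in a precise sense, and I expect the two families in $Z_{k,n}$ to correspond exactly to $\mu = (1,2,\dots,k-1,i)$ (a single "free" top coordinate) and $\mu = (1,\dots,\hat i,\dots,k,k+1)$ (a single coordinate bumped up by one). For the forward direction, given $w\in Z_{k,n}$ I would check that the order ideal below $\mu(w)$ contains at most a "chain-like" or otherwise degenerate collection of subsets so that any two of them, combined via the $\wedge,\vee$ or matching-field swap operations of Lemma~\ref{lem:basis_span}, stay in $\Tc_\ell$ — a finite, explicit verification using that only one coordinate has any freedom. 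For the converse, assuming $w\notin Z_{k,n}$, I would exhibit two incomparable subsets $I,J \le \mu(w)$ (this is where $\mu(w)$ having "enough room" — at least two independently movable coordinates — is used) whose join/meet or swap produces a genuine relation $P_IP_J - P_{I'}P_{J'}$ with $I,J\notin S_{w,k}$; then this relation survives and $G_{k,n,\ell,w}\ne 0$. The main obstacle, and where I'd spend the most care, is this converse: showing that for every Grassmannian permutation $w$ outside the two listed families the order ideal below $\mu(w)$ necessarily contains an incomparable pair whose matching-field swap relation has both monomials surviving — one must handle the block-diagonal twist (types $0$ vs $1$ columns, and the cases 3A–3D of Lemma~\ref{lem:basis_span}) rather than just the diagonal $\wedge/\vee$ picture, and check that the swap does not accidentally produce a variable in $S_{w,k}$. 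I would organize this by a case analysis on $\mu(w)$ according to whether $\mu(w)_j > j$ for two or more indices $j$, or $\mu(w)_j \ge j+2$ for some $j$, each of which yields an explicit incomparable pair inside the order ideal.
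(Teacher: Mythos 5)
Your opening reformulation is correct (the ideal is zero iff every generator of $J_{\BLambda}$ has \emph{both} of its monomials killed by the substitution), but you then drift to a strictly weaker criterion: that the surviving degree-two monomials are pairwise distinct in $[A_{\ell}]_2$ and are their own normal forms, i.e.\ lie in $\Tc_\ell$. These are not equivalent. If a surviving pair $(I,J)\in\Tc_\ell$ is row-wise equal to a \emph{distinct} pair $(I',J')$ at least one of whose members lies in $S_{w,k}$, then the generator $P_IP_J-P_{I'}P_{J'}$ maps to the nonzero monomial $P_IP_J$, so $G_{k,n,\ell,w}\neq 0$ even though every surviving monomial is its own normal form and no two surviving monomials are identified in $[A_{\ell}]_2$. (This monomial phenomenon is exactly what produces the non-toric ideals in Theorem~\ref{Intro:Grassmannian}.) So in the forward direction you must prove the stronger statement: for $w\in Z_{k,n}$ no surviving pair is row-wise equal to \emph{any} distinct pair, surviving or not. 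The paper does this with a two-line multiset argument: for $w=(1,\ldots,k-1,i)$ every surviving index set contains $1,\ldots,k-1$, so in any relation $P_IP_J=P_{I'}P_{J'}$ the multiset $I\cup J$ forces $\{I',J'\}=\{I,J\}$; similarly for the second family. Your planned ``chain-like'' verification can likely be repaired along these lines, but as stated it establishes the wrong condition.

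The converse is where the real content lies, and your proposal leaves precisely that part open: you say you would exhibit, for each $w\notin Z_{k,n}$ and each $\ell$, an incomparable surviving pair whose matching-field relation is genuine, via a case analysis on the sorted permutation and on the tableau types 3A--3D. Taming the block-diagonal twist uniformly in $\ell$ is the whole difficulty, and you give no mechanism for it. The paper's device is to avoid the case analysis altogether: from $w\notin Z_{k,n}$ one extracts $\{1,\ldots,\hat{i},\ldots,\hat{j},\ldots,k+2\}\le w$ with $i\le k-1<j\le k+1$, and then takes $I=\{1,\ldots,k-1,k+1\}$, $J=\{1,\ldots,k-2,k,k+2\}$ with $I',J'$ obtained by swapping the two largest entries; since $k\ge 4$ all four sets contain $1$ and $2$, so $\BLambda$ orders them identically for every $\ell$ and the swap is automatically a relation in $\inwb(G_{k,n})$, with $P_I,P_J$ surviving. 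The cases $k=2,3$, where this trick is unavailable, are handled by finite computation (Tables~\ref{fig:Gr24} and \ref{fig:Gr36}) plus the reduction arguments of Lemmas~\ref{lem:zero_k=2} and \ref{lem:zero_k=3}; your plan does not address these low-rank cases. A minor point: you need not check that the swap avoids $S_{w,k}$ --- if the partner monomial vanishes, the image is a nonzero monomial and the ideal is still nonzero.
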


\begin{proof}
To begin we show that $G_{k,n,\ell,w}$ is zero for each $w \in Z_{k,n}$. We distinguish two cases:
\medskip

\textbf{Case 1.} Let $w = (1, 2, \dots, k-1, i)$ for some $k \le i \le n$. Then the only variables $P_I$ which do not vanish in $G_{k,n,\ell,w}$ are indexed by:
\[I = \{1, 2, \dots, k-1, j \} \text{ for some } k \le j \le i.\]
Suppose by contradiction that $G_{k,n,\ell,w}$ is non-zero. Then there is a non-trivial relation $P_I P_J = P_{I'} P_{J'}$ in $\inwb(G_{k,n})$ for which $P_I$ and $P_J$ do not vanish. Write $I = \{1, \dots, k-1, j_1\}$ and $J = \{1, \dots, k-1, j_2\}$. Note that as multisets $I \cup J$ and $I' \cup J'$ are identical and so $I'$ and $J'$ each contain $1, \dots, k-1$. So either $I' = I$ or $I' = J$, hence the relation is trivial, a contradiction. Therefore, $G_{k,n,\ell,w}$ is zero.

\medskip

\textbf{Case 2.} Let $w = (1, 2, \dots, \hat{i}, \dots, k, k+1)$ for some $1 \le i \le k-1$. Then the only variables $P_I$ which do not vanish in $G_{k,n,\ell,w}$ are indexed by:
\[I = \{ 1, 2, \dots, \hat{j}, \dots, k+1\} \text{ for some } i \le j \le k+1.\]
Suppose by contradiction that $G_{k,n,\ell,w}$ is non-zero. Then there is a non-trivial relation $P_I P_J = P_{I'} P_{J'}$ in $\inwb(G_{k,n})$ for which $P_I$ and $P_J$ do not vanish. Write $I = \{1, \dots, \hat{j}_1, \dots, k+1\}$ and $J = \{1, \dots, \hat{j}_2, \dots, k+1\}$. Note that as multisets $I \cup J$ and $I' \cup J'$ are identical and so $I'$ and $J'$ each contain $1, \dots, \hat{j}_1, \dots, \hat{j}_2, \dots, k+1$. So either $I' = I$ or $I' = J$, hence the relation is trivial, a contradiction. So $G_{k,n,\ell,w}$ is zero.

\medskip

Conversely, let $w = (w_1, \dots, w_k)$ be a Grassmannian permutation not in $Z_{k,n}$. We will show that $G_{k,n,\ell,w}$ is non-zero. Note that for $k = 1$ the result is trivial. The cases $k = 2$ and $k=3$ hold by Lemma~\ref{lem:zero_k=2} and Lemma~\ref{lem:zero_k=3}, respectively. So it remains to show the result for $k \ge 4$.

\smallskip

Let $k \ge 4$. We will find a relation $P_I P_J = P_{I'} P_{J'}$ in $\inwb(G_{k,n})$ for which $P_I$ and $P_J$ do not vanish in $G_{k,n,\ell,w}$. First, note that there exists $1 \le i \le k-1$ such that $w_i \neq i$, otherwise $w = (1, 2, \dots, k-1, j) \in Z_{k,n}$ for some $k \le j \le n$. Secondly, there exists $i < j \le k+1$ such that $\{1, 2, \dots, \hat{i}, \dots, \hat{j}, \dots, k+2 \} \le w$, otherwise $w = (1, 2, \dots, \hat{i}, \dots, k+1) \in Z_{k,n}$. Let
\[I = \{1, \dots, k-2, k-1, k+1 \} \ \text{and}\ J = \{1, \dots, k-2, k, k+2 \}. \]
Since $j \le k+1$ and $i \le k-1$ we have $I,J \le \{1, 2, \dots, \hat{i}, \dots, \hat{j}, \dots, k+2 \}$. Therefore $P_I$ and $P_J$ do not vanish in $G_{k,n,\ell,w}$. Let
\[ 
I' = \{1, \dots, k-2, k-1, k+2 \}\ \text{and}\ J' = \{1, \dots, k-2, k, k+1 \}.
\]
Now we show that $P_I P_J = P_{I'} P_{J'}$ is a relation in $\inwb(G_{k,n})$. We recall that $k \ge 4$. Since each set $I, I', J, J'$ contains $1$ and $2$, it follows that $\BLambda(I) = \BLambda(I') = \BLambda(J) = \BLambda(J')$. And so, in the ordering of the sets $I,I', J, J'$ with respect to the matching field, the final two entries of each ordered set are the largest two elements which are in increasing order. Since $I, I', J, J'$ agree on all entries except the final two, it follows that $P_I P_J = P_{I'} P_{J'}$ is a relation in $\inwb(G_{k,n})$. \qed

\end{proof}

\begin{lemma}\label{lem:zero_k=2}
Fix $n$ and let $k = 2$. Let $\B = \BLambda$ be a block diagonal matching field. If $w = (w_1, w_2) \not \in Z_{2,n}$ is a Grassmannian permutation then $G_{k,n,\ell,w}$ is non-zero.
\end{lemma}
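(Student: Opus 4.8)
The plan is to characterize explicitly the Plücker variables $P_I$ that survive in $G_{2,n,\ell,w}$ and then exhibit a non-trivial quadratic relation among them inside $\inwb(G_{2,n})$. Write $w = (w_1, w_2)$ with $w_1 < w_2$; since $w \notin Z_{2,n}$, we have $w \neq (1,i)$ for all $i$ and $w \neq (\hat{1},2) = (2, \cdot)$ in the relevant form, so in particular $w_1 \geq 2$, and moreover $w \neq (1,2)$ forces $w_2 \geq 3$ once $w_1 = 2$, while if $w_1 \geq 3$ there is even more room. The surviving variables are indexed by those $I = \{i_1 < i_2\}$ with $I \leq (w_1, w_2)$, i.e. $i_1 \leq w_1$ and $i_2 \leq w_2$. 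First I would note that $\{1,2\}$, $\{1,3\}$, $\{2,3\}$ all survive when $w_1 = 2, w_2 = 3$, and that for larger $w$ even more survive; the key point is that there are always at least two incomparable surviving sets whose "join/meet-type" partners also survive.

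Concretely, since $w \notin Z_{2,n}$ one checks $w_1 \geq 2$ and $w_2 \geq w_1 + 1 \geq 3$. I would then set $I = \{1, w_2\}$ and $J = \{w_1, w_2 - 1\}$... but more robustly, take $I = \{1, w_1+1\}$ and $J = \{w_1, 2\}$ reordered; let me instead use the cleanest choice: let $I = \{1, w_2\}$ and $J = \{2, w_2 - 1\}$ — wait, I must ensure both are $\leq w$. A safe universal choice: put $I' = \{1, w_2\}$, $J' = \{2, w_2-1\}$ when $w_1 \geq 2$ and $w_2 \geq 3$; then $I', J'$ both satisfy the componentwise bound since $1 \leq w_1$, $2 \leq w_1$ only if $w_1 \geq 2$ (true), and $w_2-1 \geq 2 \geq 1$ with $w_2 - 1 < w_2 \leq w_2$. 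The Plücker relation $P_{\{1,w_2-1\}} P_{\{2,w_2\}} - P_{\{1,w_2\}} P_{\{2,w_2-1\}} \in G_{2,n}$ has all four index sets $\leq w$ (each has first entry $\leq 2 \leq w_1$ and second entry $\leq w_2$), hence none vanish. I would then verify that the monomial $P_{\{1,w_2-1\}}P_{\{2,w_2\}}$ and $P_{\{1,w_2\}}P_{\{2,w_2-1\}}$ are \emph{not} equal in $R/J_{\BLambda}$, equivalently that the corresponding tableaux are not row-wise equal after reordering under $\BLambda$; this is where I check the weight vector. Since $k=2$, the matching field value $\BLambda(I)$ is $(12)$ exactly when $|I \cap B_{\ell,1}| = 1$, and the tableaux for $\{1,w_2-1\},\{2,w_2\}$ versus $\{1,w_2\},\{2,w_2-1\}$ have genuinely different multiset-of-columns unless a coincidence forces equality — and I would argue this binomial, or a suitable variant handling the few boundary cases of $\ell$ relative to $w_2$, lies in $\inwb(G_{2,n})$ by the same "last two entries in increasing order, agreeing elsewhere" argument used at the end of Proposition~\ref{lem:Zero_Gr}.

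The main obstacle I anticipate is the interaction with $\ell$: when one of $1, 2, w_2-1, w_2$ straddles the block boundary $\ell$, the ordering imposed by $\BLambda$ on these two-element sets can differ between $I$ and $I'$, so the naive relation $P_IP_J - P_{I'}P_{J'}$ might not be homogeneous of the right weight or might already be trivial in $\inwb(G_{2,n})$. To handle this I would split into cases according to whether $w_2 \leq \ell$, $w_1 \leq \ell < w_2$, or $\ell < w_1$, and in each case pick the specific pair of surviving incomparable indices so that all four relevant sets have the same type (same value of $|\,\cdot \cap B_{\ell,1}|$), which makes $\BLambda$ act identically on all of them and reduces the verification to the classical diagonal case: then $P_IP_J - P_{I'}P_{J'}$ is the standard Plücker relation, it lies in $\inwb(G_{2,n})$ because $\wb_\ell$ restricted to sets of a fixed type is (up to the sign) the diagonal weight, and it is non-trivial because $I, J$ are incomparable. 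Since $w \notin Z_{2,n}$ guarantees $w_1 \geq 2$ and $w_2 \geq 3$, in every case there is enough slack below $(w_1,w_2)$ to find such a pair — for instance within $\{\{1,2\},\{1,3\},\{2,3\}\}$ if $w$ is small, or $\{\{1,w_2-1\},\{2,w_2-1\},\{1,w_2\},\{2,w_2\}\}$ otherwise — completing the proof.
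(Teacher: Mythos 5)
There is a genuine gap: your whole strategy is to exhibit a quadratic \emph{binomial} of $\inwb(G_{2,n})$ all four of whose variables survive in $G_{2,n,\ell,w}$, and such a binomial simply does not exist for some admissible $(\ell,w)$. Take $\ell=1$ and $w=(2,w_2)$ with $w_2\geq 4$ (this $w$ is not in $Z_{2,n}$). The surviving variables are $P_{\{1,j\}}$ and $P_{\{2,j\}}$ with $j\leq w_2$, whose images under $\phi_{\BLambda}$ are $\pm x_{1j}x_{21}$ and $x_{12}x_{2j}$ respectively; the corresponding exponent vectors are linearly independent, so no binomial of $J_{\BLambda}=\inwb(G_{2,n})$ involves only surviving variables, in any degree. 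Nevertheless $G_{2,n,1,w}$ is non-zero: the relation $P_{\{1,3\}}P_{\{2,4\}}-P_{\{1,2\}}P_{\{3,4\}}$ lies in $\inwb(G_{2,n})$, and since $P_{\{3,4\}}$ vanishes while $P_{\{1,3\}},P_{\{2,4\}}$ do not, the ideal contains the \emph{monomial} $P_{\{1,3\}}P_{\{2,4\}}$. So the non-vanishing witness must in general be a monomial coming from a relation in which one partner variable is killed; restricting to fully surviving binomials cannot work, and your hedge about ``suitable variants'' does not repair this. A related problem affects your case analysis even when surviving binomials do exist: you only consider quadruples $a<b<c<d$ of the \emph{same type} and the diagonal meet/join pairing, but e.g.\ for $\ell=1$, $w=(3,4)$ the only available quadruple $1<2<3<4$ has mixed types and the surviving binomial is $P_{\{1,3\}}P_{\{2,4\}}-P_{\{1,2\}}P_{\{3,4\}}$, a different pairing than $P_{\{1,4\}}P_{\{2,3\}}$; similarly $\ell=3$, $w=(2,4)$ admits no same-type quadruple at all. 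Finally, your reading of $Z_{2,n}$ is too weak: since $(2,3)\in Z_{2,n}$ and $w_1<w_2$, one gets $w_1\geq 2$ \emph{and} $w_2\geq 4$ (your discussion of $w=(2,3)$ concerns an excluded, in fact zero, case, and with only $w_2\geq 3$ your indices $1,2,w_2-1,w_2$ could degenerate).

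For comparison, the paper sidesteps all of this: from $w\notin Z_{2,n}$ it deduces $v=(2,4)\leq w$, restricts the matching field to $\{1,2,3,4\}$, and checks computationally (Table~\ref{fig:Gr24}) that $G_{2,4,\ell',v}$ is non-zero for every $\ell'\in\{0,1,2,3,4\}$; the witnessing element (binomial or monomial) then persists for $w$ because the relevant variables, being indexed by sets $\leq v\leq w$, do not vanish. If you want a computation-free argument, you must allow witnesses of the form $P_IP_J$ with $P_IP_J-P_{I'}P_{J'}\in\inwb(G_{2,n})$ where $P_{I'}$ or $P_{J'}$ vanishes, exactly as in the proofs of Proposition~\ref{lem:Zero_Gr} and Theorem~\ref{Intro:Grassmannian}.
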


\begin{proof}
We write $\B'$ for the restriction of $\B$ to $\{1,2,3,4\}$. Explicitly, $\B' = \B_{\ell'}$ is the block diagonal matching field on $\{1,2,3,4\}$ with $\ell' = \min\{4, \ell\}$. Let $v = (2,4)$.
Note that $Z_{2,n} = \{(1,2), (1,3), \dots, (1,n), (2,3) \}$ so if $w \not \in Z_{2,n}$ then we have $w_1 \ge 2$ and $w_2 \ge 4$. Therefore $v \le w$. For each possible $\ell' \in \{0,1,2,3,4 \}$ we see from Table~\ref{fig:Gr24} that $G_{2,4,\ell',v}$ is non-zero which completes the proof. \qed

\end{proof}
\begin{table}[]
    \begin{center}
   \resizebox{6cm}{!}{ \begin{tabular}{|c|l|}\hline
              Matching fields & Toric permutations\\\hline 
        Diagonal        & 24 34 \\
        (123$\mid$4)    & 34 \\
        (12$\mid$34)    & 24 34 \\
        (1$\mid$234)    & 34 \\ \hline\hline
        Zero            & 12 13 14 23 \\ \hline
   \end{tabular}
   }     \end{center}
    
    \caption{For each block diagonal matching field $\BLambda$ for $\Gr(2,4)$, we list the Grassmannian permutations $w$ for which $G_{2,4,\ell,w}$ is either toric or zero. }\label{fig:Gr24}
\end{table}

\begin{table}[]
    \begin{center}
   \resizebox{13cm}{!}{ \begin{tabular}{|c|l|}
    \hline
              Matching fields & Toric permutations\\\hline
        Diagonal        & 135  235  145  245  345  136  236  146  246  346  156  256  356  456  \\
        $\B_5$  & 135  235  145  245  345  136  236  146  246  346  156  256  356  456  \\
        $\B_4$  & 135  235  145  245  345  136  236  146  246  346  156  456            \\
        $\B_3$  & 135  235  145  345  136  236  146  346  156  356  456                 \\
        $\B_2$  & 135  235  145  245  345  136  236  146  246  346  156  256  456       \\
        $\B_1$  & 135  235  145  345  136  236  146  346  156  456                      \\ \hline\hline
        Zero            & 123  124  125  126  134  234                                          \\ \hline
        \end{tabular}
    }\end{center}
    
    \caption{For each block diagonal matching field $\BLambda$ for $\Gr(3,6)$, we list the Grassmannian permutations $w$ for which $G_{3,6,\ell,w}$ is toric. The last row of the table represents all permutations leading to the zero ideal.}\label{fig:Gr36}
\end{table}
\begin{lemma}\label{lem:zero_k=3}
Fix $n$ and let $k = 3$. Let $\B = \BLambda$ be a block diagonal matching field. If $w = (w_1, w_2, w_3) \not \in Z_{3,n}$ is a Grassmannian permutation then $G_{k,n,\ell,w}$ is non-zero.
\end{lemma}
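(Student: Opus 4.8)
The plan is to imitate the proof of Lemma~\ref{lem:zero_k=2}: produce a single ``universal'' witness permutation of bounded size that lies below every $w \notin Z_{3,n}$, and then appeal to a finite computation. The candidate here is $v := (1,3,5)$. The first step is the combinatorial observation that every Grassmannian permutation $w = (w_1,w_2,w_3) \notin Z_{3,n}$ satisfies $v \le w$ in the componentwise order. This is a short case analysis: if $w_1 = 1$, then $w_2 \ge 3$ (otherwise $w = (1,2,w_3) \in Z_{3,n}$) and $w \ne (1,3,4)$, which forces $w_3 \ge 5$; if $w_1 \ge 2$, then $w_2 \ge 3$ automatically and $w \ne (2,3,4)$ forces $w_3 \ge 5$. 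In both cases $(1,3,5) \le (w_1,w_2,w_3)$. (Note that $\mathbf{I}_{3,n} = Z_{3,n}$ when $n \le 4$, so the hypothesis $w \notin Z_{3,n}$ already implies $n \ge 5$ and the statement is vacuous for small $n$.)

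The second step transfers a non-vanishing quadratic binomial from $\Gr(3,6)$ down to $\Gr(3,n)$. Write $\B' = \B_{\ell'}$ for the restriction of $\BLambda$ to $[6]$, where $\ell' = \min\{6,\ell\}$. Every $3$-subset $I \le v$ is contained in $[5] \subseteq [6]$, and the image $\phi_{\BLambda}(P_I)$ of such an $I$ under the matching field map \eqref{eqn:monomialmap} depends only on $|I \cap \{1,\dots,\ell'\}|$; hence $\inwb(G_{3,n}) = J_{\BLambda}$ (by Theorem~\ref{thm:SAGBI}) contains every quadratic binomial of $J_{\B'} = \inw{\ell'}(G_{3,6})$ whose variables are indexed by $3$-subsets of $[6]$. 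By Theorem~\ref{prop:quad}, $\inw{\ell'}(G_{3,6})$ is generated by quadratic binomials, so if $G_{3,6,\ell',v} \ne 0$ then some binomial generator $P_I P_J - P_{I'} P_{J'}$ has non-zero image after setting $\{P_K : K \not\le v\}$ to zero; tracking which of $I,J,I',J'$ remain non-vanishing in $G_{3,n,\ell,w}$ (using $K \le v \le w$) then yields a non-zero element of $G_{3,n,\ell,w}$. Finally $G_{3,6,\ell',v} \ne 0$ for every $\ell'$, because the permutation $135$ occurs in the list of toric --- in particular non-zero --- Grassmannian permutations of every block diagonal matching field on $\Gr(3,6)$, as recorded in Table~\ref{fig:Gr36}.

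The main point requiring care is this transfer step: one must be sure that a relation valid in $\inw{\ell'}(G_{3,6})$ stays valid in $\inwb(G_{3,n})$, i.e. that the combinatorics of block diagonal matching fields is compatible with the inclusion $[6] \hookrightarrow [n]$ and with replacing $\ell$ by $\min\{6,\ell\}$. The cleanest justification uses the monomial-map description \eqref{eqn:monomialmap} of $J_{\BLambda}$ together with Theorem~\ref{thm:SAGBI}, exactly as in the proof of Lemma~\ref{lem:zero_k=2}. The only genuinely new ingredient is the observation that $(1,3,5)$ lies below every $w \notin Z_{3,n}$; everything else reduces to the finite check already tabulated for $\Gr(3,6)$.
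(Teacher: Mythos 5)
Your proof is correct and follows essentially the same route as the paper: reduce to the finite $\Gr(3,6)$ computation recorded in Table~\ref{fig:Gr36} by exhibiting a componentwise witness $v\le w$ and transferring a quadratic binomial whose surviving monomial is indexed by subsets $\le v\le w$ back to $\inwb(G_{3,n})$. The only real difference is cosmetic: where the paper splits into the cases $w\subseteq[6]$ (using $w$ itself) and $w_3>6$ (using $v=(1,3,6)$), you use the single universal witness $(1,3,5)$ in the style of the paper's own Lemma~\ref{lem:zero_k=2}, and your verification that $135$ is non-zero for every $\BLambda$ in the table, together with the more explicit justification of the transfer step via $\phi_{\BLambda}$, Theorem~\ref{prop:quad} and Theorem~\ref{thm:SAGBI}, is valid.
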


\begin{proof}
Denote by $\B'$ the block diagonal matching field $\B$ restricted to $\{1, \dots, 6 \}$. If each entry $w_i \in \{1, \dots, 6 \}$, then $w$ is a Grassmannian permutation for $\Gr(3,6)$ which is not in $Z_{3,6}$. In Table~\ref{fig:Gr36} we have verified the result for $\Gr(3,6)$, and so $G_{3,6,\ell',w}$ is non-zero. So there is a relation $P_I P_J = P_{I'} P_{J'}$ 
such that $P_I$ and $P_J$ do not vanish in $G_{3,6,\ell',w}$. 
Similarly $P_I$ and $P_J$ do not vanish in $G_{3,n,\ell,w}$, hence $G_{3,n,\ell,w}\neq 0$.

On the other hand, if we do not have each $w_i $ in $\{1, \dots, 6 \}$, then we consider $v = (1, 3, 6) \not\in Z_{3,n}$. Since $w \not\in Z_{3,n}$ we must have $w_1 \ge 1, w_2 \ge 3$ and by assumption $w_3 > 6$. Therefore $v \le w$. By the calculation in Table~\ref{fig:Gr36} we have that $G_{3,6,\ell',v}$ is non-zero and so there is a relation $P_I P_J = P_{I'} P_{J'}$ 
such that $P_I$ and $P_J$ do not vanish in $G_{3,6,\ell',v}$. Since $I,J \le v \le w$ we have that $P_I$ and $P_J$ do not vanish in $G_{3,n,\ell,w}$. Therefore, $G_{3,n,\ell,w}$ is non-zero.\end{proof}

\begin{proposition}\label{thm:Toric_Diagonal}
For the diagonal matching field, all non-zero ideals $G_{k,n,\ell,w}$ are toric. 
\end{proposition}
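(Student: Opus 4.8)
The plan is to identify $G_{k,n,\ell,w}$ for the diagonal matching field with the kernel of a monomial map, which is toric by construction. Fix a Grassmannian permutation $w=(w_1,\dots,w_k)$ with $G_{k,n,\ell,w}\neq 0$, equivalently $w\notin Z_{k,n}$ by Proposition~\ref{lem:Zero_Gr}, and write $v=(w_1<\dots<w_k)$ for the (sorted) first $k$ entries of $w$. Unwinding the definition of $S_{w,k}$, a variable $P_I$ survives in $G_{k,n,\ell,w}$ precisely when $I\le v$ in the componentwise order, so $G_{k,n,\ell,w}$ lives in the polynomial ring $S:=\mathbb{K}[P_I : I\le v]$ and is the image of $\inwb(G_{k,n})$ under the substitution homomorphism $\pi\colon R\to S$ sending $P_I\mapsto P_I$ for $I\le v$ and $P_I\mapsto 0$ otherwise; equivalently $G_{k,n,\ell,w}=(\inwb(G_{k,n})+\langle P_I : I\not\le v\rangle)\cap S=\pi(\inwb(G_{k,n}))$.

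First I would recall that for the diagonal matching field $\inwb(G_{k,n})=J_D$ is the toric ideal generated by the binomials $P_IP_J-P_{I\wedge J}P_{I\vee J}$ over incomparable pairs $I,J$, where $\wedge,\vee$ denote componentwise minimum and maximum (\cite[Theorem 14.16]{MS05}). The key elementary observation is that $\{I : I\le v\}$ is closed under $\wedge$ and $\vee$: if $I,J\le v$ then $I\wedge J\le v$ trivially and $I\vee J\le v$ since $\max\{i_t,j_t\}\le v_t$ for all $t$; conversely if $I\not\le v$ (or $J\not\le v$) then $I\vee J\ge I$ forces $I\vee J\not\le v$. Hence for each generator $P_IP_J-P_{I\wedge J}P_{I\vee J}$ of $J_D$, either both monomials survive under $\pi$ or both vanish. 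Since $\pi$ is surjective, it follows that $G_{k,n,\ell,w}=\pi(J_D)$ is generated by the surviving binomials $\{P_IP_J-P_{I\wedge J}P_{I\vee J} : I,J\le v \text{ incomparable}\}$.

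Now let $\psi\colon S\to\mathbb{K}[x_{ij}]$ be the restriction to $S$ of the monomial map $\phi_{\BLambda}$ of the diagonal matching field (all signs being $+1$); then $\Ker\psi$ is a toric ideal, and I claim $G_{k,n,\ell,w}=\Ker\psi$. The inclusion $\subseteq$ is immediate, since each surviving generator already lies in $\Ker\phi_{\BLambda}=J_D$. For $\supseteq$, recall that a toric ideal is generated by the binomials $\mathbf{P}^a-\mathbf{P}^b$ with $\psi(\mathbf{P}^a)=\psi(\mathbf{P}^b)$, and for such a pair the associated tableaux $T_a,T_b$ — whose columns all lie in $\{I : I\le v\}$ — are row-wise equal. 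By the sorting procedure of Example~\ref{example:semi_std_basis}, repeatedly replacing an incomparable pair of columns $I,J$ by $I\wedge J,I\vee J$ transforms each of $T_a,T_b$ into the unique semi-standard tableau row-wise equal to it; by the closure property above every column encountered stays $\le v$, and each move corresponds to multiplying a generator $P_IP_J-P_{I\wedge J}P_{I\vee J}$ of $G_{k,n,\ell,w}$ by a monomial of $S$. Since $T_a$ and $T_b$ have the same semi-standard representative, $\mathbf{P}^a\equiv\mathbf{P}^b\pmod{G_{k,n,\ell,w}}$, which proves the claim and shows $G_{k,n,\ell,w}$ is toric.

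I expect the part needing the most care — rather than genuine difficulty — to be the final step: checking that the generators of the toric ideal $\Ker\psi$ are exactly the binomials coming from row-wise equal tableaux with columns $\le v$, and that the $\wedge/\vee$ sorting never leaves the admissible set of columns. Everything else (the structure of $J_D$, the identification of the surviving variables with $\{I\le v\}$, and the fact that $\pi(J_D)$ is generated by the images of the generators of $J_D$) is routine once the closure of $\{I : I\le v\}$ under $\wedge,\vee$ — which is precisely what encodes the Schubert condition — is in hand.
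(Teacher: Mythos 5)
Your proposal is correct, and its combinatorial heart is exactly the paper's: the observation that the set $\{I : I\le w\}$ of surviving column indices is closed under the componentwise meet $\wedge$ and join $\vee$ (and that $I\not\le w$ forces $I\vee J\not\le w$), so that each Hibi-type generator $P_IP_J-P_{I\wedge J}P_{I\vee J}$ of $\inwb(G_{k,n})$ either survives intact or is sent to zero. Where you diverge is in how you conclude toricness: the paper stops after this observation and argues simply that the resulting ideal contains no monomials, hence is toric (leaning implicitly on the criterion, used throughout \S\ref{sec:Schubert}, that such substitution ideals fail to be toric exactly when they acquire a monomial), whereas you go on to prove primality outright by identifying $G_{k,n,0,w}$ with the kernel of the restricted monomial map $\psi$, using the $\wedge/\vee$ straightening of Example~\ref{example:semi_std_basis} and the fact that this straightening never leaves the order ideal $\{I\le w\}$. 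Your route is longer but more self-contained: it does not rely on the unproved ``monomial-free $\Rightarrow$ toric'' inference for images of toric ideals under coordinate substitutions, and it explicitly exhibits $G_{k,n,0,w}$ as the Hibi ideal of the distributive lattice $\{I\le w\}$, which the paper only records afterwards in a remark. The only points to make sure you state carefully are the ones you already flag: that $\Ker\psi=\Ker\phi_{\BLambda}\cap S$ is generated by binomials coming from row-wise equal tableaux, and that the straightening terminates in the unique semi-standard representative (both can be cited from \cite{sturmfels1996grobner} and Example~\ref{example:semi_std_basis}).
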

\begin{proof} For the diagonal matching field we have $\ell=0$. Suppose $G_{k,n,\ell,w}$ is non-zero and write $w = (w_1 < \dots < w_k)$. Let $P_I$ and $P_J$ be variables which do not vanish in $G_{k,n,\ell,w}$ and suppose that $I$ and $J$ are incomparable. Write $I = \{i_1 < \dots < i_k \}$ and $J = \{j_1 < \dots < j_k \}$. It suffices to show that $P_{I \wedge J}$ and $P_{I \vee J}$ do not vanish in $G_{k,n,\ell,w}$, where $I \wedge J = \{ \min(i_1, j_1) < \dots < \min(i_k, j_k) \}$ and $I \vee J = \{\max(i_1,j_1) < \dots < \max(i_k, j_k) \}$. However, this is immediate since $P_I$ and $P_J$ do not vanish. Hence, for every $t \in [k]$ we have $i_t \le w_t$ and $j_t \le w_t$. Therefore $P_I P_J - P_{I \wedge J} P_{I \vee J}$ is a relation in $G_{k,n,\ell,w}$ and in particular the ideal contains no monomials, hence it is toric.
\end{proof}

\begin{remark}
The ideal $G_{k,n,\ell,w}$ is the Hibi ideal \cite{hibi1987distributive} whose generators $P_I P_J - P_{I \wedge J} P_{I \vee J}$ can be read from the distributive lattice $\{I \subseteq [n] : P_I \text{ does not vanish in } G_{k,n,\ell,w}\}$.
\end{remark}

\begin{theorem}\label{Intro:Grassmannian}
Let $w=(w_1,\ldots,w_k)$ be a permutation of $k$ indices in $[n]$.  
Then a non-zero ideal 
$G_{k,n,\ell,w}$ is non-toric if and only if the following hold:
\begin{itemize}
\item[$(i)$]  $\ell \neq 0$
\item[$(ii)$] $w_1\in\{2,\ldots,n-k\}\backslash\{\ell\}$
\item[$(iii)$] $\{w_2,\ldots,w_k\}\subseteq \{\ell+1,\ldots,n\}\backslash\{w_1+1\}$
\end{itemize}
\end{theorem}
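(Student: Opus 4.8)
The plan is to characterize exactly when the toric ideal $G_{k,n,\ell,w}$ fails to be binomial, and we already know from Proposition~\ref{thm:Toric_Diagonal} that $\ell = 0$ always gives a toric (indeed Hibi) ideal, which handles the necessity of $(i)$. So assume $\ell \neq 0$. The key structural fact is that a variable $P_I$ does not vanish in $G_{k,n,\ell,w}$ if and only if $I \le (w_{\ell_1} < \cdots < w_{\ell_k})$ in the componentwise order, i.e.\ $I$ is in the order ideal generated by the sorted tuple of $w$. The ideal $G_{k,n,\ell,w}$ is obtained from $\inwb(G_{k,n}) = J_{\BLambda}$ by killing the vanishing variables, so by Lemma~\ref{lem:J_bl_gen_set_size} it is generated by the images of the binomials $P_I P_J - P_{I'} P_{J'}$ with $(I,J) \in \Tc_\ell$, $(I',J') \notin \Tc_\ell$, where $P_{I'}P_{J'}$ and $P_I P_J$ are row-wise equal. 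The ideal becomes non-toric precisely when one of these binomials degenerates to a \emph{monomial}, which happens exactly when one of the two monomials survives (all four subsets index non-vanishing variables) while the other has at least one factor that vanishes. So the whole problem reduces to: \emph{when does there exist a quadratic straightening relation $P_I P_J - P_{I'}P_{J'}$ with $I, J$ both $\le w$ but $\{I',J'\} \not\le w$ componentwise?}

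First I would analyze which swaps in a matching field tableau can fail to preserve the property ``both columns $\le w$.'' For a swap between two columns that are both of type $0$ or both of type $\ge 2$ (Cases 1, 2 in Lemma~\ref{lem:basis_span}), the straightening is the classical meet/join operation $I,J \mapsto I\wedge J, I\vee J$, and since $\le w$ is an order ideal this preserves non-vanishing — such relations stay binomial. The only dangerous relations are the genuinely new ones coming from a pair of columns of \emph{different} types, i.e.\ Case~3 of Lemma~\ref{lem:basis_span}: one column $I$ with $\BLambda(I) = (12)$ (so $i_1 \in B_{\ell,1}$, $i_2,\dots,i_k \in B_{\ell,2}$) and one column $J$ of type $0$ or $\ge 2$. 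In the tableau for such a pair the first two rows are reordered, and the straightening $S$ (Definition~\ref{def:basis}, types 3B/3C/3D) permutes entries between rows $2$ through $r$. I would write out explicitly, for each of types 3A--3D, the sets $I', J'$ occurring on the other side of the relation, and compute the componentwise maximum of $I'$ and $J'$ entrywise against $w$. The computation shows the max in position $1$ can jump: e.g.\ in type 3D the first row of $S(T)$ is $(i_1, j_1)$ with $i_1 > j_1$, and $i_1$ may exceed $w_1$ even though the original $(i_2, i_1)$ and $(j_1, j_2)$ were fine. Translating ``there exists such a bad relation'' into a condition purely on $w$ is where conditions $(ii)$ and $(iii)$ come from: one needs room to place a type-$1$ column $I$ with $i_1 = w_1$ (forcing $w_1 \ne \ell$ and $w_1 \le n-k$ so the remaining entries fit in $B_{\ell,2}$, giving $(ii)$), and one needs the partner column to be able to ``block'' the straightening, which constrains $\{w_2,\dots,w_k\}$ to avoid $w_1 + 1$ and to lie in $B_{\ell,2}$ (giving $(iii)$).

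Concretely, for the ``if'' direction I would, assuming $(i)$--$(iii)$, exhibit an explicit bad relation: take $I = \{w_1\} \cup \{\text{$k-1$ smallest available elements of } B_{\ell,2}\}$ (type $1$) and an appropriate type-$0$ partner $J$, both $\le w$ by $(ii)$ and $(iii)$, whose straightening produces some $P_{I'}$ with $I' \not\le w$ — the failure occurring in the first coordinate because $(iii)$ guarantees $w_1 + 1 \notin \{w_2,\dots,w_k\}$, so the ``lifted'' value $w_1+1$ (or similar) in $S(T)$ exceeds the corresponding $w_t$. Then $P_I P_J \mapsto 0$-or-monomial in the quotient while $P_I P_J$ itself survives, so $G_{k,n,\ell,w}$ contains a monomial and is non-toric. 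For the ``only if'' direction, assume one of $(ii)$, $(iii)$ fails (with $(i)$ holding) and show every generating binomial from Lemma~\ref{lem:J_bl_gen_set_size} with both ``surviving'' monomials actually has \emph{both} sides surviving — i.e.\ no relation degenerates. This is the case analysis over relation types 1, 2, 3A--3D: for each, given that $I, J \le w$, check $S(T)$'s columns are also $\le w$, using that when $(ii)$ fails ($w_1 \notin \{2,\dots,n-k\}$ or $w_1 = \ell$) no type-$1$ column can be ``critical,'' and when $(iii)$ fails the blocking configuration cannot occur.

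The main obstacle will be the ``only if'' direction bookkeeping: one must verify across \emph{all} of types 3A, 3B($r$), 3C($s$), 3D($s$) that if either $(ii)$ or $(iii)$ fails then the straightened columns $I', J'$ remain in the order ideal generated by the sorted $w$. The subtle point is that the straightening $S$ moves entries \emph{down} the column in some rows (a $j_t$ moving from row $t$ in one column to row $t$ in the other), and one must track that the componentwise bound $\le w$ is not violated at any row — the worst case being row $1$ (controlled by $(ii)$) and row $2$ (controlled by $(iii)$ via the $w_1+1$ condition), with rows $\ge 3$ following because there the tableaux are already in semi-standard form and the straightening only swaps within a pair of already-sorted values. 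I expect rows $\ge 3$ to be routine, row $1$ to be short, and row $2$ — the interaction between the value $w_1 + 1$, membership in $B_{\ell,2}$, and the index $r$ or $s$ — to require the most care.
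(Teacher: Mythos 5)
Your high-level framing (a non-zero $G_{k,n,\ell,w}$ is non-toric exactly when some quadratic generator of $J_{\BLambda}={\rm in}_{{\bf w}_\ell}(G_{k,n})$ degenerates to a monomial after killing the variables $P_I$ with $I\not\le w$) is sound, and your observation that generators whose two columns have the same $\BLambda$-value are harmless (they are meet/join relations, and $I',J'\le I'\vee J'$) is correct. For the ``if'' direction your plan matches the paper's in spirit, but it is under-specified in a way that matters: a type-$1$ column with $i_1=w_1$ only exists when $w_1\in B_{\ell,1}$, whereas condition $(ii)$ also allows $\ell<w_1\le n-k$; the paper needs two different explicit relations, namely $P_{w_2,w_1,u}P_{1,w_1+1,u}=P_{w_2,w_1+1,u}P_{1,w_1,u}$ when $w_1\in B_{\ell,1}$ and $P_{w_1+1,1,u}P_{w_1,w_2,u}=P_{w_1+1,w_2,u}P_{w_1,1,u}$ when $w_1\in B_{\ell,2}$ (with $u=(w_3,\dots,w_k)$), together with the verification that exactly one of the four indexing sets fails $\le w$ (using $w_1\neq\ell$ and $w_1+1\notin\{w_2,\dots,w_k\}$). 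Your ``appropriate type-$0$ partner'' and ``(or similar)'' leave precisely these checks, and the whole $w_1\in B_{\ell,2}$ case, undone.

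The genuine gap is the converse. The paper does not attempt the closure argument you sketch: it shows that a monomial $P_IP_J\in G_{k,n,\ell,w}$ descends, after deleting last entries, to a monomial in $G_{k-1,n,\ell,w'}$ with $w'=(w_1,\dots,w_{k-1})$, and then, since $|I\cup J|\le 6$, reduces via an order-preserving bijection to $\Gr(3,6)$, where the characterization is verified by explicit computation (Table~\ref{fig:Gr36}), with $\Gr(2,4)$ data handling $k=2$. Your alternative --- assume $(ii)$ or $(iii)$ fails and show every generator of Lemma~\ref{lem:J_bl_gen_set_size} with one surviving side has both sides surviving --- is a legitimately different route, but it is not carried out, and as set up it examines the wrong object: the second monomial of a generator is an arbitrary row-wise equal pair $(I',J')\notin\Tc_\ell$, not the semi-standard tableau $S(T)$ of Definition~\ref{def:basis}, which is only a counting bijection and does not index Pl\"ucker variables in the relation. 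Moreover the statement you must prove is an equivalence (survival of either side forces survival of the other), checked against every failure mode of $(ii)$--$(iii)$ ($w_1=1$, $w_1=\ell$, $w_1>n-k$, some $w_t\le\ell$ for $t\ge 2$, some $w_t=w_1+1$), and the dangerous mixed-type moves (the row-one/row-two swaps of types 3B--3D, which are exactly where the paper's explicit monomials live) are the part you defer as ``the main obstacle.'' Until that case analysis is actually done, the hard half of the theorem is unproven; the paper replaces it entirely by the inductive reduction plus the finite check at $(k,n)=(3,6)$.
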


\begin{proof}
Fix $\ell \in \{0,1, \dots, n-1 \}$. If $\ell = 0$ then by Proposition~\ref{thm:Toric_Diagonal} we have that for any $w$, $G_{k,n,\ell,w}$ is either toric or zero. So we may assume that $\ell > 0$.

Suppose that $w$ satisfies the given conditions and write $\B$ for $\BLambda$.  We will construct a relation $P_I P_J = P_{I'} P_{J'}$ for which only $P_{I'}$ vanishes in $G_{k,n,\ell,w}$. For ease of notation let $u = (w_3, \dots, w_k)$. Suppose that $w_1 \in B_{\ell,1}$. Then consider the following relation among the Pl\"ucker variables
$$
P_{w_2, w_1, u} \, P_{1, w_1+1, u} = 
P_{w_2, w_1+1, u} \, P_{1, w_1, u}.
$$
Note that the relation is given with indices ordered according to the matching field $\B$.
By assumption, $1 < w_1 < w_1+1 < w_2 < \dots < w_k$ so in the above expression the only variable to vanish in $G_{k,n,\ell,w}$ is $P_{w_2, w_1+1, u}$ because
$$(1, w_1, u) < (1, w_1+1, u) < (w_1, w_2, u) = w$$
whereas $(w_1+1, w_2, u) \not \leq (w_1, w_2, u)$. Therefore, $G_{k,n,\ell,w}$ is non-toric as it contains the monomial 
$P_{w_2, w_1, u} P_{1, w_1+1, u}$.
Now suppose that $w_1 \in B_{\ell,2}$. Then we consider the relation 
$$
P_{w_1+1, 1, u} \, P_{w_1, w_2, u} = 
P_{w_1+1, w_2, u} \, P_{w_1, 1, u}
$$
among the Pl\"ucker variables. Similarly, in the above relation the only variable to vanish in $G_{k,n,\ell,w}$ is $P_{w_1+1, w_2, u}$. So $G_{k,n,\ell,w}$ contains the monomial $P_{w_1+1, 1, u} \, P_{w_1, w_2, u}$ and hence is non-toric.

For the converse, suppose $G_{k,n,\ell,w}$ is non-toric. We proceed by induction on $k$ and $n$, reducing to the cases with $k = 3$ and $n = 6$ in which $w$ has the desired form by direct computation.

Suppose $k > 3$. We reduce to the case $k = 3$ as follows. Write $w = (w_1 < \dots < w_k)$. By assumption $G_{k,n,\ell,w}$ is non-toric and so contains a monomial $P_I P_J$ for some subsets $I = \{i_1, \dots, i_k \}$ and $J = \{j_1, \dots, j_k \}$. In $\inwb(G_{k,n})$ this monomial belongs to a relation $P_I P_J = P_{I'} P_{J'}$ where at least one of the variables $P_{I'}, P_{J'}$ vanishes in $G_{k,n,\ell,w}$. Assume that $I' = \{i_1', \dots, i_k' \}$ and $J' = \{j_1', \dots, j_k' \}$. Now we take cases on $i_k' \in \{i_k, j_k \}$.

\smallskip

\textbf{Case 1.} Let $i_k' = i_k$. Hence $j_k' = j_k$. By assumption neither $P_I$ nor $P_J$ vanishes in $G_{k,n,\ell,w}$ so $i_k, j_k \le w_k$. Therefore $P_{I\setminus i_k} P_{J \setminus j_k}$ does not vanish in $G_{k-1, n,\ell,w'}$ where $w' = (w_1, \dots, w_{k-1})$. However in $G_{k,n,\ell,w}$ we must have that either $P_{I'}$ or $P_{J'}$ vanishes so either $I' \not \leq w$ or $J' \not \leq w$. Since $i_k', j_k' \leq w_k$ then either $I'\setminus i_k' \not \leq (w_1, \dots, w_{k-1})$ or $J'\setminus j_k' \not \leq (w_1, \dots, w_{k-1})$. Hence $P_{I'} P_{J'}$ vanishes in $G_{k-1,n,\ell,w'}$. And so the relation $P_{I\setminus i_k} P_{J\setminus j_k} = P_{I'\setminus i_k'} P_{J'\setminus j_k'}$ gives rise to the monomial $P_{I\setminus i_k} P_{J\setminus j_k}$ in $G_{k-1,n,\ell,w'}$.

\smallskip

\textbf{Case 2.} Let $i_k' = j_k$ and $i_k' \neq i_k$. Now if we have $I\setminus i_k \neq I'\setminus i_k'$ then we may use the same argument above to show that $P_{I\setminus i_k} P_{J\setminus j_k}$ is a monomial in $G_{k-1, n,\ell,w'}$. Otherwise $I\setminus i_k = I'\setminus i_k'$ and hence $J\setminus j_k = J'\setminus j_k'$. But then we have $I' \le w$ and $J' \le w$ since $i_k, i_k', j_k, j_k' \le w_k$. And so $P_{I'} P_{J'}$ does not vanish in $G_{k,n,\ell,w}$, a contradiction. So we have shown that all permutations $w$ for which $G_{k,n,\ell,w}$ are non-toric arise from those in {$G_{k-1,n,\ell,w'}$} by deleting the last entry.

So we may assume that $k = 3$. Now, we consider the monomial $P_I P_J$ contained in $G_{k,n,\ell,w}$. Since we have that $\lvert I \cup J \rvert \le 6$, we may reduce to the corresponding case with $k=3$ and $n = 6$. See Table~\ref{fig:Gr36} for the list of toric ideals among $G_{3,6,\ell,w}$.
\end{proof}

\begin{example}
Here, we provide illustrative examples of binomial relations used throughout the proof of Theorem~\ref{Intro:Grassmannian}. 
Let $n = 9$, $k = 4$, $\B_{4}$ and $w = (2,5,8,9)$. Note that by the theorem we have that $G_{k,n,\ell,w}$ is non-toric. To prove this, the proof constructs the relation
$
P_{5289} P_{1389} = P_{5389} P_{1289}
$
in $G_{k,n,\ell,w}$.
Note that the left hand side does not vanish in $G_{k,n,\ell,w}$ however $(5,3,8,9) \not\leq (2,5,8,9)$ so the right hand side vanishes.

Let $n,k,\ell$ and $w$ be as above. We now consider the converse part of the proof of Theorem~\ref{Intro:Grassmannian}. Suppose we are told that $G_{k,n,\ell,w}$ is non-toric and in particular we are given that the ideal contains the monomial $P_{5278} P_{1389}$. Suppose it is obtained from the relation
$
P_{5278} P_{1389} = P_{5389} P_{1278}
$
where the right hand side vanishes but the left hand side does not. We now consider $G_{k-1, n, \ell, w'}$ where $w' = (2,5,8)$. This is non-toric because it contains the monomial $P_{527} P_{128}$ arising from 
$
P_{527} P_{138} = P_{538} P_{127}.
$
To reduce to the case with $n=6$ we consider the entries in the indices of the above relation. These entries are $E = \{1,2,3,5,7,8\}$. So, under the order-preserving bijection between $E$ and $\{1,2,3,4,5,6\}$, the above is equivalent to looking at $G_{3,6,\ell',w''}$ where $w'' = (2,4,6)$ and $\ell' = 3$. Under the bijection, the relation becomes
$
P_{425} P_{136} = P_{436} P_{125}.
$
Note that the right hand side vanishes but the left hand side does not. So $w''$ satisfies the criteria of Theorem~\ref{Intro:Grassmannian} and so, under the bijection, $w'$ and $w$ also satisfy these criteria.
\end{example}


\begin{example}
In Tables~\ref{fig:Gr24} and \ref{fig:Gr36} we list all Grassmannian permutations $w$, and all block diagonal matching fields $\BLambda$ whose corresponding ideal $G_{2,4,\ell,w}$ and $G_{3,6,\ell,w}$ is either toric or zero. Also, we can explicitly calculate the number and so the percentage of pairs $(\ell, w)$ for which $G_{k,n,\ell,w}$ is toric,  
see Tables \ref{fig:toric_zero_non-toric} and 
\ref{fig:percent_toric}. The code used to perform these calculations is available on Github:
\begin{center}
	\href{https://github.com/ollieclarke8787/toric_degenerations_gr}{https://github.com/ollieclarke8787/toric\_degenerations\_gr}
\end{center}
This repository also contains instructions for running and generating new code to perform calculations for each Grasmmannian.
\begin{table}
    \centering
     \resizebox{10cm}{!}{\begin{tabular}{|c|c|c|c|}
        \hline
        Toric    & \multicolumn{3}{|c|}{$n$} \\
        \hline
        $k$ & 4     & 5     & 6         \\
        \hline
        2   & 6     & 17    & 34  \\
        3   &       & 23    & 74  \\
        4   &       &       & 52   \\
        \hline
    \end{tabular}\quad
    \begin{tabular}{|c|c|c|c|}
        \hline
        Zero    & \multicolumn{3}{|c|}{$n$} \\
        \hline
        $k$ & 4     & 5     & 6     \\
        \hline
        2   & 16    & 25    & 36  \\
        3   &       & 25    & 36  \\
        4   &       &       & 36   \\
        \hline
    \end{tabular}\quad
    \begin{tabular}{|c|c|c|c|}
        \hline
        Non-Toric    & \multicolumn{3}{|c|}{$n$} \\
        \hline
        $k$ & 4     & 5     & 6         \\
        \hline
        2   & 2     & 8     & 20  \\
        3   &       & 2     & 10  \\
        4   &       &       & 2   \\
        \hline
    \end{tabular}
    }
    \caption{For small $k, n$ we list the number of toric, zero and non-toric ideals of form $G_{k,n,\ell,w}$.}
    \label{fig:toric_zero_non-toric}
\end{table}\end{example}
\begin{remark}
Fix $k$ and $n$. We can explicitly count the number of distinct pairs $(\ell, w)$ for which $G_{k,n,\ell,w}$ is zero. By Proposition~\ref{lem:Zero_Gr} there are exactly $n^2$ such pairs. Similarly, we can count the number of pairs $(\ell, w)$ for which $G_{k,n,\ell,w}$ is non-toric. By Theorem~\ref{Intro:Grassmannian} there are exactly $2 \binom{n-1}{k+1}$ such pairs. In total there are $n \binom{n}{k}$ distinct pairs $(\ell, w)$, and so there are $n \binom{n}{k} - 2\binom{n-1}{k+1} - n^2$ pairs which give rise to toric ideals of type $G_{k,n,\ell,w}$ inside Schubert varieties. 
\end{remark}
\begin{table}
    \centering
    \resizebox{13cm}{!}{\begin{tabular}{|c|ccccccccccccccccccc|}
        \hline
        Toric & \multicolumn{19}{c|}{$n$}                                       \\ \hline
        $k$   &4    &5  & 6  & 7  & 8  & 9  & 10 & 11 & 12 & 13 & 14 & 15 & 16 & 17 & 18 & 19 & 20 & 21 & 22 \\ \hline
        2     &25   &34 & 38 & 39 & 40 & 40 & 40 & 40 & 40 & 40 & 40 & 39 & 39 & 39 & 39 & 39 & 38 & 38 & 38 \\
        3     &     &46 & 62 & 68 & 70 & 71 & 71 & 70 & 70 & 69 & 68 & 67 & 67 & 66 & 65 & 65 & 64 & 64 & 63 \\
        4     &     &   & 58 & 75 & 81 & 83 & 83 & 83 & 82 & 81 & 80 & 79 & 78 & 78 & 77 & 76 & 76 & 75 & 74 \\
        5     &     &   &    & 65 & 83 & 88 & 89 & 89 & 89 & 88 & 87 & 86 & 85 & 85 & 84 & 83 & 82 & 82 & 81 \\
        6     &     &   &    &    & 71 & 87 & 92 & 93 & 93 & 92 & 91 & 91 & 90 & 89 & 88 & 88 & 87 & 86 & 86 \\
        7     &     &   &    &    &    & 74 & 90 & 94 & 95 & 95 & 94 & 94 & 93 & 92 & 91 & 91 & 90 & 90 & 89 \\
        8     &     &   &    &    &    &    & 77 & 92 & 96 & 96 & 96 & 96 & 95 & 94 & 94 & 93 & 93 & 92 & 92 \\
        9     &     &   &    &    &    &    &    & 80 & 94 & 97 & 97 & 97 & 97 & 96 & 96 & 95 & 94 & 94 & 94 \\
        10    &     &   &    &    &    &    &    &    & 82 & 95 & 97 & 98 & 98 & 97 & 97 & 96 & 96 & 95 & 95 \\
        11    &     &   &    &    &    &    &    &    &    & 83 & 96 & 98 & 98 & 98 & 98 & 97 & 97 & 97 & 96 \\
        12    &     &   &    &    &    &    &    &    &    &    & 84 & 96 & 98 & 99 & 98 & 98 & 98 & 97 & 97 \\
        13    &     &   &    &    &    &    &    &    &    &    &    & 86 & 97 & 99 & 99 & 99 & 98 & 98 & 98 \\
        14    &     &   &    &    &    &    &    &    &    &    &    &    & 87 & 97 & 99 & 99 & 99 & 99 & 98 \\
        15    &     &   &    &    &    &    &    &    &    &    &    &    &    & 87 & 98 & 99 & 99 & 99 & 99 \\
        16    &     &   &    &    &    &    &    &    &    &    &    &    &    &    & 88 & 98 & 99 & 99 & 99 \\
        17    &     &   &    &    &    &    &    &    &    &    &    &    &    &    &    & 89 & 98 & 99 & 99 \\
        18    &     &   &    &    &    &    &    &    &    &    &    &    &    &    &    &    & 89 & 98 & 99 \\
        19    &     &   &    &    &    &    &    &    &    &    &    &    &    &    &    &    &    & 90 & 98 \\
        20    &     &   &    &    &    &    &    &    &    &    &    &    &    &    &    &    &    &    & 90 \\ \hline
    \end{tabular}
    }
    \caption{For each $k, n$ we calculate the percentage of pairs $(\ell, w)$ for which $G_{k,n,\ell,w}$ is toric. }
    \label{fig:percent_toric}
\end{table}

\bigskip

\bibliographystyle{alpha} 
\bibliography{Trop1}

\newcommand{\etalchar}[1]{$^{#1}$}
\begin{thebibliography}{BLMM17}

\bibitem[ACK18]{FvectorGC}
Byung~Hee An, Yunhyung Cho, and Jang~Soo Kim.
\newblock On the f-vectors of {G}elfand-{T}setlin polytopes.
\newblock {\em European Journal of Combinatorics}, 67:61--77, 2018.

\bibitem[And13]{An13}
Dave Anderson.
\newblock Okounkov bodies and toric degenerations.
\newblock {\em Mathematische Annalen}, 356(3):1183--1202, 2013.

\bibitem[BFF{\etalchar{+}}18]{BFFHL}
Lara Bossinger, Xin Fang, Ghislain Fourier, Milena Hering, and Martina Lanini.
\newblock Toric degenerations of {G}r$(2,n)$ and {G}r$(3,6)$ via plabic graphs.
\newblock {\em Annals of Combinatorics}, 22(3):491--512, 2018.

\bibitem[BLMM17]{bossinger2017computing}
Lara Bossinger, Sara Lamboglia, Kalina Mincheva, and Fatemeh Mohammadi.
\newblock Computing toric degenerations of flag varieties.
\newblock In {\em Combinatorial Algebraic Geometry}, pages 247--281. Springer,
  2017.

\bibitem[BMC20]{Fatemeh4}
Lara Bossinger, Fatemeh Mohammadi, and Alfredo~N{\'a}jera Ch{\'a}vez.
\newblock Families of {G}r\"obner degenerations, {G}rassmannians and universal
  cluster algebras.
\newblock {\em arXiv preprint arXiv:2007.14972}, 2020.

\bibitem[Cal02]{caldero2002toric}
Philippe Caldero.
\newblock Toric degenerations of {S}chubert varieties.
\newblock {\em Transformation Groups}, 7(1):51--60, 2002.

\bibitem[CCM20]{Ollie4}
Narasimha {Chary Bonala}, Oliver Clarke, and Fatemeh Mohammadi.
\newblock Standard monomial theory and toric degenerations of {R}ichardson
  varieties inside {G}rassmannians and flag varieties.
\newblock {\em arXiv preprint arXiv:2009.03210}, 2020.

\bibitem[CHM20]{Akihiro}
Oliver Clarke, Akihiro Higashitani, and Fatemeh Mohammadi.
\newblock Matching field polytopes and their combinatorial mutations.
\newblock {\em In preparation}, 2020.

\bibitem[CM19]{OllieFatemeh2}
Oliver Clarke and Fatemeh Mohammadi.
\newblock Toric degenerations of flag varieties from matching field tableaux.
\newblock {\em To appear in Journal of Pure and Applied Algebra, arXiv preprint
  arXiv:1904.07832}, 2019.

\bibitem[CM20]{OllieFatemeh3}
Oliver Clarke and Fatemeh Mohammadi.
\newblock Standard monomial theory and toric degenerations of {S}chubert
  varieties from matching field tableaux.
\newblock {\em arXiv preprint arXiv:2009.03215}, 2020.

\bibitem[DM14]{Cone}
Anton Dochtermann and Fatemeh Mohammadi.
\newblock Cellular resolutions from mapping cones.
\newblock {\em Journal of Combinatorial Theory, Series A}, 128:180--206, 2014.

\bibitem[EHM11]{ene2011monomial}
Viviana Ene, J{\"u}rgen Herzog, and Fatemeh Mohammadi.
\newblock Monomial ideals and toric rings of {H}ibi type arising from a finite
  poset.
\newblock {\em European Journal of Combinatorics}, 32(3):404--421, 2011.

\bibitem[FFL17]{fang2017toric}
Xin Fang, Ghislain Fourier, and Peter Littelmann.
\newblock On toric degenerations of flag varieties.
\newblock {\em Representation Theory--Current Trends and Perspectives}, pages
  187--232, 2017.

\bibitem[FR15]{fink2015stiefel}
Alex Fink and Felipe Rinc{\'o}n.
\newblock Stiefel tropical linear spaces.
\newblock {\em Journal of Combinatorial Theory, Series A}, 135:291--331, 2015.

\bibitem[GJ00]{polymake:2017}
Ewgenij Gawrilow and Michael Joswig.
\newblock {\tt polymake}: a framework for analyzing convex polytopes.
\newblock In {\em Polytopes---combinatorics and computation ({O}berwolfach,
  1997)}, volume~29 of {\em DMV Sem.}, pages 43--73. Birkh\"auser, Basel, 2000.

\bibitem[GL96]{gonciulea1996degenerations}
Nicolae Gonciulea and Venkatramani Lakshmibai.
\newblock Degenerations of flag and {S}chubert varieties to toric varieties.
\newblock {\em Transformation Groups}, 1(3):215--248, 1996.

\bibitem[GS]{M2}
Daniel~R. Grayson and Michael~E. Stillman.
\newblock Macaulay2, a software system for research in algebraic geometry.
\newblock Available at https://faculty.math.illinois.edu/Macaulay2/.

\bibitem[Hib87]{hibi1987distributive}
Takayuki Hibi.
\newblock Distributive lattices, affine semigroup rings and algebras with
  straightening laws.
\newblock In {\em Commutative Algebra and Combinatorics}, pages 93--109.
  Mathematical Society of Japan, 1987.

\bibitem[HJJS09]{herrmann2009draw}
Sven Herrmann, Anders Jensen, Michael Joswig, and Bernd Sturmfels.
\newblock How to draw tropical planes.
\newblock {\em The Electronic Journal of Combinatorics}, 16(2):6, 2009.

\bibitem[KM05]{KOGAN}
Mikhail Kogan and Ezra Miller.
\newblock Toric degeneration of {S}chubert varieties and {G}elfand-{T}setlin
  polytopes.
\newblock {\em Advances in Mathematics}, 193(1):1--17, 2005.

\bibitem[KM19]{KM16}
Kaveh Kaveh and Christopher Manon.
\newblock Khovanskii bases, higher rank valuations, and tropical geometry.
\newblock {\em SIAM J. Appl. Algebra Geom.}, 3(2):292--336, 2019.

\bibitem[KMS15]{kateri2015family}
Maria Kateri, Fatemeh Mohammadi, and Bernd Sturmfels.
\newblock A family of quasisymmetry models.
\newblock {\em Journal of Algebraic Statistics}, 6(1), 2015.

\bibitem[LM14]{Mateusz}
Micha{\l} Laso{\'n} and Mateusz Micha{\l}ek.
\newblock On the toric ideal of a matroid.
\newblock {\em Advances in Mathematics}, 259:1--12, 2014.

\bibitem[MS05]{MS05}
Ezra Miller and Bernd Sturmfels.
\newblock {\em Combinatorial {C}ommutative {A}lgebra}, volume 227 of {\em
  Graduate Texts in Mathematics}.
\newblock Springer-Verlag, New York, 2005.

\bibitem[MS19]{KristinFatemeh}
Fatemeh Mohammadi and Kristin Shaw.
\newblock Toric degenerations of {G}rassmannians from matching fields.
\newblock {\em Algebraic Combinatorics}, 2(6):1109--1124, 2019.

\bibitem[OH99]{Ohsugi}
Hidefumi Ohsugi and Takayuki Hibi.
\newblock Toric ideals generated by quadratic binomials.
\newblock {\em Journal of Algebra}, 218(2):509--527, 1999.

\bibitem[RS90]{robbiano1990subalgebra}
Lorenzo Robbiano and Moss Sweedler.
\newblock Subalgebra bases.
\newblock In {\em Commutative Algebra}, pages 61--87. Springer, 1990.

\bibitem[RW19]{rietsch2017newton}
Konstanze Rietsch and Lauren Williams.
\newblock Newton--{O}kounkov bodies, cluster duality, and mirror symmetry for
  {G}rassmannians.
\newblock {\em Duke Mathematical Journal}, 168(18):3437--3527, 2019.

\bibitem[SS04]{Speyer}
David Speyer and Bernd Sturmfels.
\newblock The tropical {G}rassmannian.
\newblock {\em Advances in Geometry}, 4(3):389--411, 2004.

\bibitem[Stu96]{sturmfels1996grobner}
Bernd Sturmfels.
\newblock {\em Gr{\"o}bner Bases and Convex Polytopes}, volume~8.
\newblock American Mathematical Society, 1996.

\bibitem[SZ93]{sturmfels1993maximal}
Bernd Sturmfels and Andrei Zelevinsky.
\newblock Maximal minors and their leading terms.
\newblock {\em Advances in Mathematics}, 98(1):65--112, 1993.

\bibitem[Whi80]{White}
Neil~L. White.
\newblock A unique exchange property for bases.
\newblock {\em Linear Algebra and its Applications}, 31:81--91, 1980.

\bibitem[Wit15]{Witaszek}
Jakub Witaszek.
\newblock The degeneration of the {G}rassmannian into a toric variety and the
  calculation of the eigenspaces of a torus action.
\newblock {\em Journal of Algebraic Statistics}, 6(1), 2015.

\end{thebibliography}

\bigskip
\bigskip

\noindent
\footnotesize {\bf Authors' addresses:}

\medskip

\noindent University of Bristol, School of Mathematics,
BS8 1TW, Bristol, UK
\\
\noindent  E-mail addresses: {\tt oliver.clarke@bristol.ac.uk}

\medskip

\noindent
Department of Mathematics: Algebra and Geometry, Ghent University, 9000 Gent, Belgium \\
Department of Mathematics and Statistics, 
UiT – The Arctic University of Norway, 9037 Troms\o, Norway
\\ E-mail address: {\tt fatemeh.mohammadi@ugent.be}

\bigskip

\begin{table}
    \centering
    \footnotesize
        \begin{tabular}{|c|c|c|c|}
        \hline
        $(k,n)$ & ${\ell}$ & The $f$-vector of the toric polytope \\
        \hline
        \hline
        \multirow{3}{*}{(3,6)} & 0,3 & 20 122 372 670 766 571 276 83 14   \\
        \cline{2-3}
         & 1 & 20 122 376 690 807 615 302 91 15   \\
        \cline{2-3}
         & 2* & 20 122 376 690 807 615 302 91 15   \\
         \cline{2-3}
         & 4 & 20 122 378 701 832 645 322 98 16   \\
         \hline
        \hline
        \multirow{5}{*}{(3,7)} & 0 & 35 329 1514 4177 7599 9579 8573 5485 2487 778 159 19   \\
        \cline{2-3}
         & 1 & 35 329 1546 4411 8352 10977 10221 6762 3136 986 197 22   \\
        \cline{2-3}
         & 2 & 35 329 1548 4424 8388 11032 10271 6789 3144 987 197 22   \\
         \cline{2-3}
         & 3 & 35 329 1528 4276 7907 10132 9204 5959 2721 851 172 20   \\
         \cline{2-3}
         & 4 & 35 329 1535 4329 8084 10474 9625 6301 2904 913 184 21   \\
         \cline{2-3}
         & 5 & 35 329 1555 4483 8606 11495 10893 7336 3458 1100 220 24   \\
         \hline
        \multirow{3}{*}{(4,7)} & 0,3 & 35 329 1514 4177 7599 9579 8573 5485 2487 778 159 19 \\
        \cline{2-3}
         & 1 & 35 329 1528 4276 7907 10132 9204 5959 2721 851 172 20 \\
        \cline{2-3}
         & 2* & 35 329 1528 4276 7907 10132 9204 5959 2721 851 172 20 \\
         \cline{2-3}
         & 4 & 35 329 1535 4329 8084 10474 9625 6301 2904 913 184 21 \\
        \hline
        \hline
                \multirow{7}{*}{(3,8)} & 0 & \begin{tabular}[c]{@{}c@{}}56 756 4852 18664 48026 87804 118120 119262 \\ 91204 52844 23004 7384 1690 260 24\end{tabular} \\
        \cline{2-3}
        & 1 & \begin{tabular}[c]{@{}c@{}}56 756 4997 20102 54503 105309 149704 159294 \\ 127702 76922 34389 11138 2507 365 30\end{tabular} \\
        \cline{2-3}
        & 2 & \begin{tabular}[c]{@{}c@{}}56 756 5014 20251 55087 106656 151727 161359 \\ 129151 77611 34601 11176 2510 365 30\end{tabular} \\
        \cline{2-3}
        & 3 & \begin{tabular}[c]{@{}c@{}}56 756 4955 19647 52288 98898 137416 142869 \\ 112015 66149 29118 9350 2109 313 27\end{tabular} \\
        \cline{2-3}
        & 4 & \begin{tabular}[c]{@{}c@{}}56 756 4932 19458 51605 97483 135591 141406 \\ 111378 66126 29258 9430 2129 315 27\end{tabular} \\
        \cline{2-3}
        & 5 & \begin{tabular}[c]{@{}c@{}}56 756 4994 20020 53925 103256 145248 152866 \\ 121291 72438 32201 10412 2352 346 29\end{tabular} \\
        \cline{2-3}
        & 6 & \begin{tabular}[c]{@{}c@{}}56 756 5014 20302 55557 108606 156529 169146 \\ 137866 84464 38376 12598 2857 414 33\end{tabular} \\
        \hline
        \multirow{6}{*}{(4,8)} & 0 & \begin{tabular}[c]{@{}c@{}}70 1097 7901 33641 95366 192200 286329 322731 278740 \\ 185312 94561 36643 10565 2189 307 26\end{tabular} \\
        \cline{2-3}
        & 1 & \begin{tabular}[c]{@{}c@{}}70 1097 8050 35219 102954 214160 329032 381669 338135 \\ 229544 118884 46386 13324 2711 366 29\end{tabular} \\
        \cline{2-3}
        & 2 & \begin{tabular}[c]{@{}c@{}}70 1097 8057 35293 103297 215086 330656 383616 339765 \\ 230497 119265 46485 13339 2712 366 29\end{tabular} \\
        \cline{2-3}
        & 3 & \begin{tabular}[c]{@{}c@{}}70 1097 7950 34187 98077 200195 302020 344432 300536 \\ 201423 103327 40109 11532 2369 327 27\end{tabular} \\
        \cline{2-3}
        & 4 & \begin{tabular}[c]{@{}c@{}}70 1097 8021 34837 100844 207461 315162 361710 317488 \\ 213949 110269 42946 12359 2531 346 28\end{tabular} \\
        \cline{2-3}
        & 5 & \begin{tabular}[c]{@{}c@{}}70 1097 8070 35509 104698 220164 342491 402630 361607 \\ 248745 130380 51365 14840 3018 403 31\end{tabular} \\
        \hline
    \end{tabular}
    
    \caption{For each $\Gr(k,n)$ and matching field $\BLambda$ we calculate the f-vector of the toric polytope associated to the matching field $\BLambda$. The f-vector for the rows (*) are the same as the row above, however these polytopes have non-isomorphic face lattices and so define non-isomorphic toric varieties.
}
    \label{tab:toric_polytopes}
\end{table}

\end{document}